\providecommand{\href}[2]{#2}
\pgfplotsset{compat=1.15}
\pgfplotsset{compat=1.9}
\numberwithin{equation}{section}
\numberwithin{figure}{section}
\theoremstyle{plain}
\newtheorem{thm}{\protect\theoremname}[section]
\theoremstyle{plain}
\newtheorem{theorem}{\protect\theoremname}
  \theoremstyle{remark}
  \newtheorem{rem}[thm]{\protect\remarkname}
  \theoremstyle{remark}
  \newtheorem{example}[thm]{\protect\examplename}
  \theoremstyle{remark}
  \theoremstyle{remark}
  \newtheorem*{acknowledgement*}{\protect\acknowledgementname}
  \theoremstyle{plain}
  \newtheorem{prop}[thm]{\protect\propositionname}
  \theoremstyle{definition}
  \newtheorem{defn}[thm]{\protect\definitionname}
  \theoremstyle{plain}
  \newtheorem{lem}[thm]{\protect\lemmaname}
\newcommand{\Z}{\mathbb{Z}}
\newcommand{\R}{\mathbb{R}}
\newcommand{\id}{\mathrm{id}}
\newcommand{\lagmd}{\mathcal{L}^{m,\mathbf{d}}}
\newcommand{\col}{\textnormal{col}}
\newcommand{\red}{\textnormal{red}}
\newcommand{\Crit}{\mathrm{Crit}}
\newcommand{\beq}{\begin{equation}}
\newcommand{\beqn}{\begin{equation}\nonumber}
\newcommand{\eeq}{\end{equation}}
\newcommand{\bea}{\begin{equation}\begin{aligned}}
\newcommand{\bean}{\begin{equation}\begin{aligned}\nonumber}
\newcommand{\eea}{\end{aligned}\end{equation}}
  \providecommand{\acknowledgementname}{Acknowledgement}
  \providecommand{\corollaryname}{Corollary}
  \providecommand{\definitionname}{Definition}
  \providecommand{\examplename}{Example}
  \providecommand{\lemmaname}{Lemma}
  \providecommand{\propositionname}{Proposition}
  \providecommand{\remarkname}{Remark}
\providecommand{\theoremname}{Theorem}
\begin{document}
\title{Filtered Fukaya categories}
\date{\today}
\author{Giovanni Ambrosioni}
\thanks{The author was partially supported by the Swiss National Science Foundation (grant number 200021-204107).}
\address{Giovanni Ambrosioni\\ ETH Z\"urich}
\email{giovanni.ambrosioni@math.ethz.ch}
\maketitle
\thispagestyle{empty}
\begin{abstract}
    We upgrade the natural weakly-filtered structure of Fukaya categories discussed in \cite{Biran2021LagrangianCategories} to a genuinely filtered one. The main tools are a Morse-Bott, or `cluster', model for Fukaya categories and a particular choice of class of perturbation data. We also include the construction of continuation $A_\infty$-functors following \cite{Sylvan2019OnCategories} in the context of filtered Fukaya categories.
\end{abstract}

%\addtocontents{toc}{\protect\setcounter{tocdepth}{0}}

%\input{sections/0.introduction}

%\addtocontents{toc}{\protect\setcounter{tocdepth}{2}}
%\newpage 
%{ \ \small \doublespacing  \tableofcontents }
%\newpage\tableofcontents
\addtocontents{toc}{\protect\setcounter{tocdepth}{0}}

\section*{Introduction}
\subsection*{Background}
The application of the methods of persistence homology to symplectic geometry, in particular Floer theory, has lead to many important results. The basic fact is that in many cases Floer complexes naturally admit the structure of filtered chain complexes, meaning that the standard Floer differential preserves a filtration induced by the action functional, so that Floer persistence homology is a well-defined object, both in the closed string (or Hamiltonian) case as well as in the open (or Lagrangian) one. However, defining an adequate filtered structure on the $A_\infty$-categorification of Lagrangian Floer homology, the so called Fukaya category of a symplectic manifold, has been proven to be difficult, as discussed at length in \cite{Biran2021LagrangianCategories}, due to the presence of domain dependent Hamiltonian perturbations in the definition of the $A_\infty$-maps. On the other hand, in \cite{Biran2021LagrangianCategories}, the authors showed that Fukaya categories naturally admit the structure of a homologically unital \textit{weakly}-filtered $A_\infty$-category, meaning that the $A_\infty$-maps are filtered up to a uniform error depending on the order (which may eventually diverge) and representatives of the identities generally lie at positive filtration levels. If the uniform errors vanish and the representatives of units lie at  filtration level zero, we say that the $A_\infty$-category is filtered. In this paper we explain how to construct filtered Fukaya categories by slightly modifying the definition of the $A_\infty$-maps to count Floer `clusters' instead of Floer polygons, and by constructing adequate Hamiltonian perturbations. In particular, via our construction the derived Fukaya category of a symplectic manifold inherits the structure of a triangulated persistence category in the sense of \cite{Biran2023TriangulationCategories}.

\subsection*{Main results}
%We will denote the standard Novikov field over $\Z_2$ as $$\Lambda:=\left\{ \sum_{k=0}^\infty a_kT^{\lambda_k}: \ a_k\in \Z_2, \ \lambda_k\in \mathbb{R} \ \lim_{k\rightarrow \infty}\lambda_k= \infty\right\}$$ and by $\Lambda_0$ the positive Novikov field over $\Z_2$, that is $$\Lambda_0:=\left\{ \sum_{k=0}^\infty a_kT^{\lambda_k}: \ a_k\in \Z_2, \ \lambda_k\geq 0, \ \lim_{k\rightarrow \infty}\lambda_k= \infty\right\}$$
Let $(M,\omega)$ be a closed or convex at infinity symplectic manifold, and denote by $\mathcal{L}^*(M)$ a family of closed Lagrangian submanifolds of $M$ belonging to some class $*$ (e.g. $*=m$ for monotone Lagrangians, $*=w\text{-}ex$ for weakly exact Lagrangians, $*=ex$ for exact Lagrangians). In this paper we will work with monotone Lagrangians, but our construction works for weakly exact Lagrangians and exact Lagrangians in Lioville manifolds too.\\ We will refer to the model of the Fukaya category developed in Seidel's book \cite{Seidel2008FukayaTheory} as the \textit{standard model for the Fukaya category of $M$} (see also e.g. \cite{Biran2013LagrangianI, Biran2014LagrangianCategories, Biran2021LagrangianCategories} for this model adapted to the monotone setting). In this standard model, morphisms sets are Floer complexes $CF(L,L')$ where $L,L'\in \mathcal{L}^*(M)$ and the $A_\infty$-maps $$\mu_d: CF(L_0,L_1)\otimes \cdots \otimes CF(L_{d-1},L_d)\to CF(L_0,L_d)$$ are defined for any $d\geq 1$ and any tuple $(L_0,\ldots, L_d)$ of Lagrangians in $\mathcal{L}^*(M)$ by counting Hamiltonian perturbed pseudoholomorphic polygons with Lagrangian boundary conditions, so-called Floer polygons. The representatives of homological units are constructed by counting Floer `$1$-gons'. The action functional (defined in Section \ref{wfs}) induces an increasing real filtration on Floer complexes, that is a choice of subspaces $CF^{\leq \alpha}(L,L')$ for any $\alpha \in \R$ such that $$CF^{\leq \alpha}(L,L')\subset CF^{\leq \beta}(L,L') \textnormal{   for any }\alpha\leq \beta.$$ It is well-known that $\mu_1$ preserves the above defined filtration, while in general higher order maps do not, and representatives of the units lie at positive filtration levels (cfr. \cite{Biran2021LagrangianCategories}). Our main result, Theorem \ref{mainthm}, asserts that it is possible to construct a model for the Fukaya category admitting filtration-preserving $A_\infty$-maps as well as units lying in filtration level $\leq 0$. The following is a rough version of our main result.
\vspace{1.5mm}
\begin{theorem}\label{thmA} There is a non-empty space $E_\text{reg}$ of perturbation data such that associated to any $p\in E_\text{reg}$ there is a \textit{strictly} unital and \textit{filtered }$A_\infty$-category $\mathcal{F}uk(M;p)$, whose set of objects is $\mathcal{L}^*(M)$ and whose filtered structure is induced by the action functional. When ignoring filtrations this $A_\infty$-category is quasi-equivalent to the standard Fukaya category via an $A_\infty$-functor which is the identity on objects.
\end{theorem}

\noindent We will actually define the space $E_\text{reg}$ by defining (non-empty) subspaces $E^{\varepsilon}_\text{reg}$ for any $\varepsilon>0$ and then setting $E_\text{reg}:= \bigcup_{\varepsilon>0}E^{\varepsilon}_\text{reg}$. Here, $\varepsilon>0$ will act as a uniform bound to the size of the Hamiltonian Floer data one can associate to elements in the space $E^\varepsilon_\text{reg}$. The subscript `reg' has to do with the fact that at some point we will have to restrict ourselves to regular perturbation data in order to have well-defined $A_\infty$-categories (as in \cite{Seidel2008FukayaTheory}).

To prove Theorem \ref{thmA} we have to tackle two problems: define $A_\infty$-maps that do not shift filtration, and find represenatives of the unit lying at vanishing filtration levels. To solve the first problem we will construct the special space $E^\varepsilon$ of perturbation data.
%As it will be apparent from the construction of $\varepsilon$-perturbation data in Section \ref{actualconstruction}, to solve the first problem we would only need to define a special class of Hamiltonian perturbation data for the standard model of the Fukaya category. However, as we will argue in Section \ref{addend}, it is not possible to find representatives of the units at zero filtration levels by using such standard model. We solve this problem by replacing self-Floer complexes with Morse complexes, whence the need for the so-called cluster complex.\\
The idea for construction of such spaces is very simple and we sketch it here: we pick `flat enough' Hamiltonian Floer data for any couple of Lagrangians in $\mathcal{L}^*(M)$ and use coordinates induced by strip-like ends on Floer polygons to interpolate between the Hamiltonian Floer datum and the zero function by a particular class of monotone homotopies, while asking that the Hamiltonian perturbation data vanish away from the strip-like ends. The definition of the spaces $E^\varepsilon$ of so-called $\varepsilon$-perturbation data is given in Section \ref{actualconstruction}. However, even when working with perturbation data from some family $E^\varepsilon$, the Fukaya category of $M$ may still be not filtered, because of the second problem listed above. Indeed, in Section \ref{addend} we argue that it is not possible to find representatives of the units at zero filtration levels by using the standard model for the Fukaya category. To cope with this fact, we use a Morse-bott, or `cluster', model for Fukaya categories, by defining a new $A_\infty$-category whose self-morphisms spaces are pearl complexes (in the sense of \cite{Biran2009LagrangianHomology, Biran2008RigiditySubmanifolds}), instead of Floer ones, and by considering `clusters' of pearly-Morse trees and Floer polygons. This model was first introduced in \cite{Cornea2005ClusterHomology} and later outlined in \cite{Biran2023TriangulationCategories} in the exact case; we work out more details of this model in Section \ref{mbfuk}.

After developing the machinery of $\varepsilon$-perturbation data and the associated filtered Fukaya categories we introduce continuation $A_\infty$-functors, an $A_\infty$-extension of continuation chain maps, in Section \ref{contfunc}, following \cite{Sylvan2019OnCategories}. The main result from Section \ref{contfunc} may be roughly stated as follows.
\vspace{1.5mm}
\begin{theorem}\label{thmB}
    Let $\varepsilon_1,\varepsilon_2>0$, $p\in E^{\varepsilon_1}_\text{reg}$ and $q\in E^{\varepsilon_2}_\text{reg}$. There is a unital $A_\infty$-quasi equivalence $$\mathcal{F}^{p,q}\colon \mathcal{F}uk(M;p)\to \mathcal{F}uk(M;q)$$ canonical up to quasi-isomorphism of $A_\infty$-functors which shifts filtration by $\leq \varepsilon_2-\frac{\varepsilon_1}{2}$.
\end{theorem}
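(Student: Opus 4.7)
The plan is to follow the construction of continuation $A_\infty$-functors of \cite{Sylvan2019OnCategories}, adapted to the filtered cluster model developed in the body of the paper. For each $d\geq 0$ and tuple $(L_0,\dots,L_d)$, the component $\mathcal{F}^{p,q}_d$ will be defined by counting isolated Morse-Bott (pearly) solutions of a Floer-type equation on discs with $d+1$ boundary punctures equipped with a \emph{continuation perturbation datum} $P^{p,q}$ that interpolates between the $p$-datum on the $d$ input strip-like ends and the $q$-datum on the output strip-like end. Coherence with the operadic boundary of the Stasheff compactification is enforced inductively: at a codimension-1 stratum where one disc breaks off, the continuation datum restricts to a $p$- or $q$-perturbation datum on the detached component and to a lower-order continuation datum on the other. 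Transversality in this class is obtained by a standard Sard-Smale argument, and analysis of the codimension-1 boundary of the resulting moduli spaces yields exactly the $A_\infty$-functor equations for $\mathcal{F}^{p,q}$.

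The main technical point, which I expect to be the principal obstacle, is the filtration-shift bound $\leq \varepsilon_2-\varepsilon_1/2$. The usual energy-action identity applied to a continuation cluster expresses the difference between the output action and the sum of input actions as the negative of the geometric energy plus a curvature correction bounded by the $L^\infty$-oscillation of the Hamiltonian along the continuation homotopy. The $+\varepsilon_2$ summand is the same contribution that, in Theorem \ref{thmA}, makes the $q$-structure filtration-preserving. The $-\varepsilon_1/2$ gain has to be extracted from the hypothesis $p\in E^{\varepsilon_1}_\text{reg}$: exploiting the monotone homotopy structure baked into $\varepsilon$-perturbation data in Section \ref{actualconstruction}, one arranges the continuation datum so that on each input strip-like end only half of the $p$-interpolation is realised, contributing at most $\varepsilon_1/2$ with the opposite sign relative to $\varepsilon_2$. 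The delicate point is to carry this bookkeeping coherently across all Morse-Bott cluster degenerations, which requires the monotone homotopies to patch together over the full moduli space without the estimate deteriorating at boundary strata.

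Strict unitality $\mathcal{F}^{p,q}_1(e^p_L)=e^q_L$ can be established in the cluster model by representing both units by the maximum of a fixed auxiliary Morse function on $L$, so that the relevant count reduces to a standard Morse-theoretic continuation along a path of Morse data. Canonicity up to $A_\infty$-quasi-isomorphism follows by the standard parametrised argument: any two regular choices $P_0^{p,q}, P_1^{p,q}$ can be joined by a path of continuation perturbation data, and the associated parametrised cluster moduli assemble into an $A_\infty$-homotopy between the two functors. Finally, $\mathcal{F}^{p,q}_1$ induces on cohomology the classical filtered Lagrangian Floer continuation map $\HF(L,L';p)\to \HF(L,L';q)$, which is an isomorphism, so $\mathcal{F}^{p,q}$ is a quasi-equivalence.
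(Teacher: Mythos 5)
Your overall architecture coincides with the paper's: Sylvan-type continuation functors built from interpolation data on stacked (multiplihedron-type) cluster moduli spaces, with the $A_\infty$-functor equations read off from the codimension-one boundary, unitality from a dimension count on the (shared) Morse part, and canonicity and quasi-equivalence obtained from parametrised moduli together with the classical continuation isomorphism. The gap is in the one step you yourself flag as the principal obstacle, namely the origin of the $-\varepsilon_1/2$. Your proposed mechanism --- arranging that ``only half of the $p$-interpolation is realised'' on each input strip-like end --- is not what happens and would not suffice. The asymptotic conditions at the input punctures force the Hamiltonian term to equal $H_p^{L_{i-1},L_i}\,dt$ near the puncture, and the $\varepsilon$-perturbation framework of Section \ref{actualconstruction} forces it to vanish away from the strip-like ends and along boundary tangent directions; so the input homotopy must run all the way from $H_p^{L_{i-1},L_i}$ down to $0$, and there is no consistent way to stop halfway. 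Moreover, even if one could, a half-homotopy would contribute a curvature term bounded above only by $-\tfrac12\min_M H_p^{L_{i-1},L_i}$, which need not be $\leq-\varepsilon_1/2$, so the claimed bound would not follow.

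The correct source of the gain is the $\delta$-parameter hidden in the definition of $(\varepsilon,\delta)$-Floer data: for $p\in E^{\varepsilon_1,\delta_1}_{\textnormal{reg}}$ with $\delta_1>\tfrac12$ one has $\textnormal{image}\bigl(H_p^{L_{i-1},L_i}\bigr)\subset(\delta_1\varepsilon_1,\varepsilon_1)$, so the \emph{full} monotone homotopy switching $H_p$ off on an input end contributes a curvature term $<-\delta_1\varepsilon_1<-\varepsilon_1/2$, while switching $H_q$ on at the output end contributes $<\varepsilon_2$. Summed over the worst boundary stratum, parametrised by $\mathcal{R}^{d+1}_C(\vec{L})\times\prod_{i=1}^d\mathcal{R}^{2,2}(L_{i-1},L_i)$, this gives a total shift $\leq d(\varepsilon_2-\delta_1\varepsilon_1)$ for $\mathcal{F}^{p,q}_d$, i.e.\ the functor shifts filtration by $\leq\varepsilon_2-\delta_1\varepsilon_1\leq\varepsilon_2-\varepsilon_1/2$. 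You should also be more careful with the boundary combinatorics: the relevant compactification is the multiplihedron rather than the associahedron, and its strata include simultaneous breakings of several continuation components (requiring intrinsic width functions to glue coherently), which is exactly where the consistency of the interpolation data --- and hence the uniformity of the curvature estimate --- has to be verified.
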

\noindent The bound in the shift on filtration of those functors is obtained by constructing perturbation data for continuation functors following the same principles as in the construction of the spaces $E^\varepsilon$ mentioned above.

\subsection*{Relation to previous work} The weakly-filtered structure of Fukaya categories in the closed monotone case has been discussed in \cite{Biran2021LagrangianCategories}, which, for the application contained in that paper, did not need to be refined to a genuine filtered structure. The cluster model for the Fukaya category (frist introduced in \cite{Cornea2005ClusterHomology}) has been recently discussed in \cite{Charest2012SourceComplexes} in the case of a single Lagrangian, in \cite{Biran2023TriangulationCategories} in the case of a finite family of transversely intersecting exact Lagrangians, and in \cite{Sheridan2011OnPants} in the exact case. The difference between our cluster model and that contained in \cite{Sheridan2011OnPants}, which was not motivated by the study of filtrations on Fukaya categories, is that we work in the monotone case and do not pick Hamiltonian perturbations for smooth disks, as those would interfere with the filtered structure. Our approach generalizes the methods in \cite{Biran2023TriangulationCategories} by introducing $\varepsilon$-perturbation data and hence allowing to get rid of the restrictive finiteness and transversality assumptions.

\subsection*{Future work} As it will be clear from the construction, the filtered equivalence class of our Fukaya categories depends on the choice of perturbation data (and in particular on the choice of the parameter $\varepsilon>0$). In many applications it is an interesting question to investigate the behaviour of invariants constructed via the filtered structure when $\varepsilon\to 0$. In Section \ref{contfunc} we define continuation functors between Fukaya categories adapting previous work of Sylvan \cite{Sylvan2019OnCategories} to our setting, which we plan to exploit in future work to introduce quantitative invariants of Fukaya categories.

A first, immediate consequence of our construction is clear: working with filtered $A_\infty$-categories is handier than with categories that are only weakly-filtered; in particular this simplifies many proofs of statements that appeal to the weakly-filtered structure of Fukaya categories as in \cite{Biran2021LagrangianCategories}. For these applications, our construction is not an absolutely necessary tool. However, most importantly, our genuinely filtered Fukaya category allows one to state new results about the structure of the Fukaya category itself, mainly due to two reasons: our model is what one would call a `minimal energy' model (due to the fact that we define self-Floer complexes via the pearl model, which carries no Hamiltonian perturbations at all), and it allows nice control over the (positive or negative) shifts in filtration of maps that are defined on the Fukaya category, and some of its associated invariants, such as Hochschild homology, or the so-called open-closed maps.

\subsection*{Organization of the paper.} In Section \ref{preliminaries} we recall the definition of (weakly)-filtered $A_\infty$-categories and functors. We also go over monotone Lagrangians and introduce the combinatorial objects that we will use to describe compactifications of moduli spaces of Floer clusters. In Section \ref{mbfuk} we work out the `cluster' model for Fukaya categories and in Section \ref{wfs} we adapt the work of \cite{Biran2021LagrangianCategories} to show that our Fukaya categories naturally inherit the structure of weakly-filtered $A_\infty$-categories with filtered unit. In Section \ref{actualconstruction} we construct the classes of $\varepsilon$-perturbation data, which ensure that the associated Fukaya categories are genuinely filtered. In Section \ref{addend} we argue why the cluster model seems necessary to have a filtered Fukaya category. In Section \ref{contfunc} we develop the machinery of continuation functors between filtered Fukaya categories, building on \cite{Sylvan2019OnCategories}.

\subsection*{Acknowledgments} I would like to thank Paul Biran for numerous discussions about filtrations and Fukaya categories, and for mentioning the idea of a cluster model to me. Many thanks to the anonymous referee for their careful reading of a first draft of this manuscript and for many valuables suggestions and comments.

\addtocontents{toc}{\protect\setcounter{tocdepth}{2}}
\newpage 
{ \ \small \doublespacing  \tableofcontents }
\newpage
\section{Preliminaries}\label{preliminaries}

\subsection{Filtered $A_\infty$-categories}\label{ainfpre}
In this section we recall the basics about weakly-filtered $A_{\infty}$-categories and weakly-filtered $A_\infty$-functors as introduced in \cite{Biran2021LagrangianCategories}.

Let $(C,d)$ be a chain complex. A filtration on $(C,d)$ is a choice of subspaces $C^{\leq \alpha}\subset C$ for any $\alpha \in \R$ such that $C^{\leq \alpha}\subset C^{\leq \beta}$ for any $\alpha \leq \beta$ and $d(C^{\leq \alpha})\subset C^{\leq \alpha}$ for any $\alpha\in \R$. We refer to the second property by saying that the differential $d$ preserves the filtration $(C^{\leq \alpha)})_\alpha$ on $C$. A chain complex endowed with a filtration is called a filtered chain complex. Consider two filtered chain complexes $(C,d)$ and $(\overline{C}, \overline{d})$, a chain map $\varphi: (C,d)\to (\overline{C},\overline{d})$ between them and a non-negative real number $r\geq 0$. We say that $\varphi$ shifts filtration by $\leq r$ if $\varphi(C^{\leq \alpha})\subset \overline{C}^{\alpha+r}$ for any $\alpha \in \R$. If $\varphi$ shifts filtration by $\leq 0$ we say that it is a filtered chain map.

Consider now a homologically unital $A_\infty$-category $\left(\mathcal{A}, \mu=(\mu_d)_{d\geq 1}\right)$. We will use homological notation as in \cite{Biran2021LagrangianCategories}. Given objects $X,Y\in \text{Ob}(\mathcal{A})$ we write $\mathcal{A}(X,Y):=\text{hom}_\mathcal{A}(X,Y)$ for the morphism space between $X$ and $Y$. We recall that for any $d\geq 1$ the map $\mu_d$ is, for any tuple $X_0, \ldots, X_d$ of objects of $\mathcal{A}$, a linear map $$\mu_d\colon \mathcal{A}(X_0,X_1)\otimes \cdots \otimes \mathcal{A}(X_{d-1}, X_d)\to \mathcal{A}(X_0,X_d)$$ satisfying the $A_\infty$-equation \begin{equation}\label{ainfequation}
    \mu^1\circ\mu^d + \sum_{\substack{i+j=d-1\\i,j\geq 0}}\mu^d(id^i\otimes \mu^1\otimes \id^j) = \sum_{\substack{i+j+k=d+1\\i,j\geq 0, \ k \geq 1}}\mu^{i+j+1}(id^i\otimes \mu^k\otimes id^j)
\end{equation}  for any $d\geq 1$.\\
Let $u^\mathcal{A}\geq 0$ be a non-negative real number and $\boldsymbol{\varepsilon}^\mathcal{A}=(\varepsilon_d^\mathcal{A})_{d\geq 2}$ be a sequence of non-negative real numbers. A weakly filtered structure on $(\mathcal{A},\mu)$ with discrepancy $(\boldsymbol{\varepsilon}^\mathcal{A}, u^\mathcal{A})$ is a choice of filtration on the chain complexes $(\mathcal{A}(X,Y),\mu^1)$ for any two objects $X,Y\in \textnormal{Ob}(\mathcal{A})$ such that:
\begin{enumerate}
    \item for any $d\geq 2$, any tuple $(X_0,\ldots,X_d)$ of objects of $\mathcal{A}$ and any real numbers $\alpha_1,\ldots, \alpha_d\in \R$ we have $$\mu_d\left(\bigotimes_{i=1}^d\mathcal{A}^{\leq \alpha_i}(X_{i-1},X_i)\right)\subset \mathcal{A}^{\leq \sum_{i=1}^d \alpha_i + \varepsilon^\mathcal{A}_d}(X_0,X_d),$$
    \item for any object $X\in \textnormal{Ob}(\mathcal{A})$ there exists a representative $e_X\in \mathcal{A}(X,X)$ of the unit of $X$ such that $$e_X\in \mathcal{A}^{\leq u^\mathcal{A}}(X,X).$$
\end{enumerate}
\noindent An $A_\infty$-category endowed with a weakly-filtered structure will be called a weakly-filtered $A_\infty$-category. A weakly-filtered category $(\mathcal{A},\mu_d)$ is said to be filtered if we can take $u^\mathcal{A}=0$ and $\varepsilon_d^\mathcal{A}=0$ for any $d\geq 2$.

%We introduce the notion of weakly-filtered $A_\infty$-functor between filtered (and not \textit{weakly}-filtered) $A_\infty$-categories. We skip the general case since it is not of interest for our geometric purposes (see Appendix \ref{contfunc}). 
Let $(\mathcal{A}, \mu^\mathcal{A}_d)$ and $(\mathcal{B}, \mu_d^\mathcal{B})$ be filtered $A_\infty$-categories, $\mathcal{F}:\mathcal{A}\to \mathcal{B}$ be an $A_\infty$-functor between them and $\boldsymbol{\varepsilon}^\mathcal{F}:=(\varepsilon^\mathcal{F}_i)_{i\geq 1}$ be a sequence of non-negative real numbers. We say that $\mathcal{F}$ is a weakly-filtered $A_\infty$-functor with discrepancy $\boldsymbol{\varepsilon}^\mathcal{F}$ if for any $d\geq 1$, any tuple $(X_0,\ldots, X_d)$ of objects of $\mathcal{A}$ and any real numbers $\alpha_1,\ldots, \alpha_d\in \R$ we have $$\mathcal{F}_d\left(\bigotimes_{i=1}^d\mathcal{A}^{\leq \alpha_i}(X_{i-1},X_i)\right)\subset \mathcal{B}^{\leq\sum_{i=1}^d\alpha_i+\varepsilon^\mathcal{F}_d}\left(\mathcal{F}(X_0),\mathcal{F}(X_d)\right)$$
\noindent A weakly-filtered functor $\mathcal{F}$ is said to be filtered if we can take $\varepsilon_i^\mathcal{F}=0$ for any $d\geq 1$. $\mathcal{F}$ is said to shift filtration by $\rho\geq 0$ if we can take $\varepsilon_d^\mathcal{F}=d\rho$ for any $d\geq 1$. In \cite{Fukaya2021Gromov-HausdorffTheory}, such functors are called `with energy loss $\rho$'.

%{\tiny Let $\mathcal{F}, \mathcal{G}:\mathcal{A}\to \mathcal{B}$ be two weakly-filtered $A_\infty$-functors with discrepancies $\boldsymbol{\varepsilon}^\mathcal{F}$ and $\boldsymbol{\varepsilon}^\mathcal{G}$ between two filtered $A_\infty$-categories. We recall that an $A_\infty$-pre-natural transformation $\mathcal{N}:\mathcal{F}\to \mathcal{G}$ between $\mathcal{F}$ and $\mathcal{G}$ is a collection $(\mathcal{N}_d)_{d\geq 0}$ of maps where for any $d\geq 0$ and any tuple $(X_0,\ldots, X_d)$ of objects of $\mathcal{A}$ of any length $d\geq 0$ $\mathcal{N}_d$ has the form $$\mathcal{T}_d:\bigotimes_{i=1}^d \mathcal{A}(X_{i-1},X_i)\to \mathcal{B}(\mathcal{F}(X_0), \mathcal{G}(X_d))$$ One can define a DG-category $\text{Fun}(\mathcal{A}, \mathcal{B})$ (see \cite[Section 1d]{Seidel2008FukayaTheory}) where objects are $A_\infty$-morphisms $\mathcal{A}\to \mathcal{B}$ and morphisms are pre-natural transformations between them. $A_\infty$-natural transformations are $A_\infty$-pre-natural transformations that are cycles in this category. Let $\mathcal{N}:\mathcal{F}\to \mathcal{G}$ be an $A_\infty$-pre-natural transformations between $\mathcal{F}$ and $\mathcal{G}$. {\color{red} not needed}}
%========================
%+=============++++++====
%+++++===================

\subsection{Monotone Lagrangians}\label{monotonelags}
Let $(M,\omega)$ be a closed symplectic manifold, fixed for the rest of the paper.
%Let $J$ be an $\omega$-compatible almost complex structure on $M$ and consider the first Chern class $c_1\in H^2(M)$. We have induced maps $$\omega: \pi_2(M)\to \mathbb R \textnormal{   and   }c_1:\pi_2(M)\to \Z$$ We assume that $(M,\omega)$ is monotone, i.e. that there is a positive constant $\tau_M>0$ such that $$\omega = \tau c_1$$
Let $L$ be a Lagrangian submanifold of $M$. The Maslov class of $L$ induces a map $$\mu: \pi_2(M,L)\to \Z.$$ We say that $L$ is monotone if there is a positive constant $\tau>0$ such that $$\omega = \tau \mu,$$ where we see $\omega$ as a map on $\pi_2(M,L)$, and if $N_L\geq 2$ where $N_L$ generates the image of $\mu$ in $\Z$. We refer to $\tau$ as the monotonicity constant of $L$.\\
We will denote the standard Novikov field over $\Z_2$ as $$\Lambda:=\left\{ \sum_{k=0}^\infty a_kT^{\lambda_k}: \ a_k\in \Z_2, \ \lambda_k\in \mathbb{R} \ \lim_{k\rightarrow \infty}\lambda_k= \infty\right\}$$ and by $\Lambda_0$ the positive Novikov ring over $\Z_2$, that is $$\Lambda_0:=\left\{ \sum_{k=0}^\infty a_kT^{\lambda_k}: \ a_k\in \Z_2, \ \lambda_k\geq 0, \ \lim_{k\rightarrow \infty}\lambda_k= \infty\right\}$$ 
Let $L$ be a monotone Lagrangian and assume in addition that it is closed. Then for a generic choice of almost complex structure $J$ on $M$ the count of $J$-holomorphic disks with boundary on $L$, Maslov index equal to $2$ and passing through a generic point $p\in L$ weighted by symplectic area is well-defined, and independent from the choice of $J$ and of the point $p\in L$. We denote this count by $\mathbf{d}_L\in \Lambda_0$ (see \cite{Lazzarini2011RelativeCurves}). Let $\mathbf{d}\in \Lambda_0$, then we define $\lagmd(M,\omega)$, where the $m$ stands for \textit{monotone}, as the set of closed, connected and monotone Lagrangians $L$ og $M$ with $\mathbf{d}_L=\mathbf{d}$. Note that if $\mathbf{d}\neq 0$ all Lagrangians in $\lagmd(M,\omega)$ share the same monotonicity constant.

%===================================
%====================================
%=====================================

\subsection{Tuples of Lagrangians}\label{tuplesoflags}  In this subsection, we introduce many definitions and notations that will turn out to be very useful when defining our geometric structures in Section \ref{mbfuk} and in Section \ref{contfunc}.

Let $d\geq 1$ and pick a tuple $(L_0,\ldots,L_d)$ of Lagranians in $M$. In the following, we will often denote such a tuple by $\vec{L}$.

\begin{defn}
    We say that $\vec{L}$ is made of cyclically different Lagrangians \label{cicldiff} (or, simply, is a cyclically different tuple) if $L_i\neq L_{i+1}$ for each $i\in \{0,\ldots,d\}$\footnote{Here and in the following definitions like these have to be taken modulo $d$. In this case this means $L_d\neq L_{d+1}=L_0$.}, while we say that it is made of almost cyclically different Lagrangians (or, simply, is an almost cyclically different tuple) if $L_i\neq L_{i+1}$ for any $i\in \{0,\ldots,d-1\}$ but $L_0=L_d$.
\end{defn} 
\begin{defn}\label{defreducedtuple} Assume now that the tuple $\vec{L}$ is not cyclically different: each time there are consecutive indices $i,i+1,\ldots,i+k$ indexing the same Lagrangian, we subtract $k$ from all the indices bigger than $i+k+1$ and write the new tuple as $L_0,\ldots,L_{i-1}, (\overline{L}_i, k), \overline{L}_{i+1},\ldots,\overline{L}_{d-k}$, with $L_{i-1}\neq \overline{L}_i\neq \overline{L}_{i+1}$ (we always work modulo $d+1$); we repeat this process until we get a tuple $$ \vec{L}_\textnormal{red}:=\left((\overline{L}_0,m_0),\ldots,(\overline{L}_{d^R},m_{d^R})\right)$$ of $(d^R+1)$-many cyclically different Lagrangians with multiplicities $m_i\geq 1$, which we will call the \textit{reduced tuple} of $\vec{L}$.
\end{defn}

\noindent Of course the number $d^R$ satisfies $0\leq d^R\leq d$. Notice that the multiplicity $m_0\geq 1$ can be split as a sum $m_0= m_0^b+m_0^e$ where $m_0^b\geq 1$ is the number of subsequent Lagrangians equal to $L_0$ at the beginning of the tuple $\vec{L}$, while $m_0^e\geq 0$ is the number of subsequent Lagrangians equal to $L_0$ at the end of the tuple. \label{defofmis} For notational convenience we will write $\overline{m_0}:=m_0^b, \overline{m_i}:=m_i$ for $i=1,\ldots,d^R$ and $\overline{m_{d^R+1}}:=m_0^e$. In the following we will often omit multiplicities from the notation of the reduced tuple $\vec{L}_\textnormal{red}$. 

\begin{defn}\label{deffundamentaltuple}
    Given $\vec{L}=(L_0,\ldots,L_d)$ we define a tuple $\vec{L^F}=(L_0^F,\ldots, L_{d^F}^F)$ of length $d^F+1$, where $d^F\geq 0$, as follows: $L_0^F:= L_0$ while $L_k^F:=\overline{L_m}$, where $$m:=\min\left\{l\in \{k,\ldots, d^R-1\} \ : \ \overline{L_l}\neq L_j^F \ \forall j\in \{0,\ldots, k-1\}\right\}$$ until there is no Lagrangian left to index. We call $\vec{L^F}$ the \textit{fundamental tuple} of $\vec{L}$.
\end{defn}

\noindent Of course the number $d^F$ satisfies $0\leq d^F\leq d^R\leq d$. In words, given a tuple $\vec{L}$, the reduced tuple $\vec{L}_\textnormal{red}$ is the tuple obtained by merging pairs of equal subsequent Lagrangians in $\vec{L}$, while the fundamental tuple $\vec{L}^F$ is obtained by keeping geometrically different Lagrangians only (for an example see the next Example and Figure \ref{fig:extree}).
\begin{example}
    Let $L_0,L_1,L_2,L_3,L_4$ denote five different Lagrangians in $(M,\omega)$ and consider the tuple $$\vec{L}=\bigl(L_0,L_0,L_1,L_1,L_3,L_3,L_3,L_2,L_4,L_1,L_3,L_2,L_0,L_0\bigr).$$ In this case $d=13$. The corresponding reduced tuple is $$\vec{L}_\textnormal{red} = \bigl((L_0,4), (L_1,2),(L_3,3),(L_2,1),(L_4,1),(L_1,1),(L_3,1),(L_2,1)\bigr)$$ that is, $m_0^b=2$ and $m_0^e=2$. In particular $$\overline{L}_0 = L_0, \ \overline{L}_1=L_1, \ \overline{L}_2= L_3, \ \overline{L}_3=L_2, \ \overline{L}_4=L_4, \ \overline{L}_5=L_1, \ \overline{L}_6=L_3, \overline{L}_7=L_2. $$ The fundamental tuple of $\vec{L}$ is then $$\vec{L}^F= (L_0,L_1,L_3,L_2,L_4)$$ that is $$L_0^F= L_0, \ L_1^F=L_1, \ L_2^F=L_3, \ L_3^F=L_2, \ L_4^F=L_4.$$
    In conclusion, in this case we have $d^F=5< d^R=7<d=13$.
\end{example}

Consider a tuple $\vec{L}=(L_0,\ldots,L_d)$ of Lagrangians in $M$. We will write $$\pi_2(M,\vec{L}):=\pi_2\left(M,\bigcup_{i=0}^dL_i\right)$$ where $\pi_2$ denotes the second fundamental group. Notice that for any subtuple $(\tilde{L_0},\ldots, \tilde{L_k})$ of $\vec{L}$ of any length we have a map $$\pi_2\left(M,\bigcup_{i=0}^k\tilde{L_i}\right)\to \pi_2(M,\vec{L})$$ induced by the inclusion $\cup \tilde{L_i}\to \cup L_i$. In the rest of the paper, we will see each possible $\pi_2(M,\cup \tilde{L_i})$ as a subset of $\pi_2(M,\vec{L})$ and omit the inclusions above.\label{piconvention}

%================================
%+===============================
%================================

\subsection{Trees}\label{sectiontrees}

Let $d\geq 2$. We define, as in \cite{Seidel2008FukayaTheory}, a $d$-leafed tree to be a properly embedded planar tree $T\subset \R^2$ with $d+1$ semi-infinite edges, which we call exterior edges, one of which is distinguished and called the root of $T$, denoted $e_T$, while the others are numbered clockwise starting from the root and denoted $$e_0(T)=e_T, e_1(T), \ldots,  e_{d}(T)$$ The unique vertex attached to the root will be called the root vertex and denoted by $v_T$. For us, all leafed trees are oriented from the non-root leaves to the root.  For $d=1$, a leafed tree is just an infinite edge, oriented from leaf to root.  \begin{rem}
    Note that, as in 
\cite{Seidel2008FukayaTheory} but contrary to most of the literature, we do not consider leaves to be vertices.
\end{rem}\noindent Notice that a $d$-leafed tree cuts $\R^2$ in $d+1$ connected components, which we number clockwise, starting from the one nearest to the root when moving clockwise. We abuse notation and denote those connected components also by $e_0(T),\ldots,e_{d}(T)$. Next we introduce a bunch of notations and definition for leafed trees. \begin{defn}\label{hugedegfortrees}
    Let $T$ be a $d$-leafed tree.
    \begin{enumerate}
        \item We denote by $V(T)$ the set of its vertices, by $E(T)$ the set of its edges and by $E^\textnormal{int}(T)\subset E(T)$ the subset of edges which are not exterior, and which we call interior.
        \item We write $|v|$ for the number of edges wich are attached to a vertex $v\in V(T)$, and call it the \textit{valency} of $v$, $|v|_\textnormal{int}$ for the number of interior edges attached to $v$ and $|v|_e:=|v|-|v|_\textnormal{int}$ for the number of exterior edges attached to $v$. We denote by $V^i(T)\subset V(T)$ the subset of vertices of $T$ having valency $i$. 
        \item Assume that $T$ has no vertices of valency equal to $1$, then a vertex $v\in V(T)$ touches $|v|$ connected components of $\R^2-T$: we number them clockwise starting from the one associated to the edge attached to $v$ which is nearest to the root and denote them by $$e_0(v),\ldots,e_{|v|-1}(v)$$ for any vertex $v\in V(T)$.
        \item $T$ is called \textit{stable} if the minimal valency of a vertex of $T$ is $3$, and it is called \textit{binary} if each vertex has valency equal to $3$. We denote by $\mathcal{T}^{d+1}$ the space of stable $d$-leafed trees, where two trees are identified if there exists an isomorphism of planar trees between them.
        \item A \textit{flag} of $T$ is a couple $(v,e)\in V(T)\times E(T)$ such that the edge $e$ is attached to the vertex $v$. Given a vertex $v\in V(T)$ we denote by $f_0(v)\in E(T)$ the unique edge that exits from $v$ (with respect to the orientation introduced above), and by $f_1(v),\ldots,f_{|v|-1}(v)\in E(T)$ the remaining edges attached to $v$, ordered in clockwise order starting from $f_0(v)$. Conversely, given an edge $e\in E(T)$ we define $t(e)\in V(T)$ as the start-vertex of $e$ and by $h(e)\in V(T)$ the end-vertex of $e$ (of course, one between $h$ or $t$ is not defined for exterior edges). We denote by $F(T)$ the set of flags of $T$ and by $F^\textnormal{int}(T)\subset F(T)$ the subset of flags made of interior edges. 
        \item A metric on $T$ is a map $\lambda: E(T)\to [0,\infty]$ such that  $$\lambda(e_i(T))=\infty \textnormal{ for any }i=0,\ldots,d \textnormal{   and   } \lambda(e)<\infty \textnormal{ for any }e\in E^\textnormal{int}(T)$$ We call a couple $(T,\lambda)$ a metric tree and we denote by $\lambda(T)$ the space of metrics of $T$. We also define the space $\overline{\lambda(T)}$ of maps $\lambda:E(T)\to [0,\infty]$ such that $$\lambda(e_i(T))=\infty \textnormal{ for any }i=0,\ldots,d$$ Let $k\in \Z$, then we define $\lambda^k(T)\subset \overline{\lambda(T)}$ to be the space of metrics with exactly $k$ interior edges of infinite length. Note that if $T\in \mathcal{T}^{d+1}$ then $\lambda^k(T)=\emptyset$ for any $k>d-1$.
        \item If $T'\subset T$ is a subtree of $T$ and $\lambda\in \lambda(T)$ is a metric on $T$, then $\lambda$ induces a metric on $T'$, which we still denote by $\lambda$, and call $(T',\lambda)$ a metric subtree of $(T,\lambda)$, even if $T'$ itself is not leafed or unstable. We denote by $\lambda(T')$ the space of metrics on $T'$ in the sense above, and by $\overline{\lambda(T')}$ the obvious analogous of $\overline{\lambda(T)}$.\\
    \end{enumerate}
\end{defn}

\subsection{Trees with Lagrangian labels}\label{sectionlagtrees} Let $\vec{L}=(L_0,\ldots,L_d)$ be a tuple of Lagrangians in $\lagmd(M,\omega)$ and consider a $d$-leafed tree $T$. 

\begin{defn}
The assignement $L_i\mapsto e_i(T)$, where $e_i(T)$ is seen as the $i$th connected component of $\R^2\setminus T$ in the sense of Section \ref{sectiontrees}, is called a \textit{labelling} by $L$ for $T$. We denote by $\mathcal{T}^{d+1}(\vec{L})$ the space of \textit{stable} $d$-leafed trees labelled by $\vec{L}$.
\end{defn} 
\noindent If $\vec{L^F}={L}$, i.e $\vec{L}$ contains the same Lagrangian $d+1$ times, we simply write $\mathcal{T}^{d+1}(L)$ for $\mathcal{T}^{d+1}(\vec{L})$. \\
Let now $T\in \mathcal{T}^{d+1}(\vec{L})$ be a labelled tree. Any vertex $v\in V(T)$ naturally inherits a labelling $\vec{L}_v$ by the conventions above\footnote{ That is, $L_i\in \vec{L}_v$ if and only if $v$ touches the connected component $e_i(T)$.}. Any edge $e\in E(T)$ also inherits a labelling by a couple of Lagrangians in an obvious way.
\begin{defn}
    An edge $e\in E(T)$ is said to be \textit{unilabelled} if it is labelled by a couple of equal Lagrangians. 
\end{defn}
\noindent We introduce the following notations for various subsets of the set of edges $E(T)$ of $T$:
\begin{itemize}
    \item we denote by $E_U(T)\subset E(T)$ the set of unilabelled edges of $T$,
    \item we denote by $E_F(T):=E(T)\setminus E_U(T)$ the set of non-unilabelled edges of $T$,
    \item for any $i\in\{0,\ldots, d^F\}$, we denote by $E_i(T)\subset E(T)$ the set of edges of $T$ that are (uni)labelled by the Lagrangian $L_i^F$ (see Definition \ref{deffundamentaltuple} for the definition of $\vec{L}_F$ and $d^F$),
    \item we set $E_U^\textnormal{int}(T):=E_U(T)\cap E^\textnormal{int}(T)$, $E_F^\textnormal{int}:=E_F(T)\cap E^\textnormal{int}(T)$ and $E_i^\textnormal{int}(T):=E_i(T)\cap E^\textnormal{int}(T)$ for any $i\in \{0,\ldots, d^F\}$
\end{itemize}

\begin{rem}
    The concept of unilabelled edge will be relevant for the following reason: as in \cite{Seidel2008FukayaTheory} in order to efficiently deal with Floer-curves in the definition of the Fukaya category in Section \ref{mbfuk} we will use the language of trees; however, in our case those curves will be a combination of Floer-polygons (to use the terminology from \cite{Seidel2008FukayaTheory}( and trees of Morse trajectories, where the latter will be controlled precisely by trees of unilabelled edges. 
\end{rem}\begin{defn}
    We define $\mathcal{T}^{d+1}_U(\vec{L})\subset \mathcal{T}^{d+1}(\vec{L})$ to be the set of stable trees labelled by $\vec{L}$ with only unilabelled interior edges, that is $$\mathcal{T}^{d+1}_U(\vec{L}):=\left\{ T\in \mathcal{T}^{d+1}(\vec{L}): \ E^\textnormal{int}_U(T)=E^\textnormal{int}(T)\right\}.$$Given a labelled tree $T\in \mathcal{T}^{d+1}(\vec{L})$ we define $T_\textnormal{red}$ to be $T$ with unilabelled edges removed.
 \end{defn}   \begin{defn}
    A metric $\lambda\in \lambda(T)$ on a labelled tree $T\in \mathcal{T}^{d+1}(\vec{L})$ is said to be unilabelled if $\lambda(e)=0$ for any non-unilabelled interior edge $e\in E^\textnormal{int}(T)$. We denote by $\lambda_U(T)$ the space of unilabelled metrics on $T$,  by $\overline{\lambda_U(T)}$ its closure in $\overline{\lambda(T)}$ and set $\lambda^k_U(T):=\lambda_U(T)\cap \lambda^k(T)$ for any $k$, where $\lambda^k(T)\subset\overline{\lambda(T)}$ is the subspace of metrics with exactly $k$ interior edges with infinite length (see Definition \ref{hugedegfortrees}). 
\end{defn} 
\noindent The stability requirement in the definition of trees in the subset $\mathcal{T}^{d+1}_U(\vec{L})$ will be crucial in order to deal with breaking of Floer-clusters correctly in Section \ref{mbfuk}. %(note that here and in what follows we distinguish $\overline{L_i}$ from $\overline{L_k}$ for $i\neq k$ even if they agree as Lagrangians) 
Note that if $T\in \mathcal{T}^{d+1}_U(\vec{L})$ is unilabelled, then $\lambda_U(T)=\lambda(T)$.
%Notice that $\lambda_M(T)\cong [0,\infty)^????????????????????????$. 
 In general, the subtree $T_\textnormal{red}$ may not be connected and its connected components may be unstable leafed trees. Notice that if $T$ lies in $\mathcal{T}^{d+1}_U(\vec{L})$, then $T_\textnormal{red}$ obviously does not have any interior edges. 
%A metric $\lambda\in \lambda(T)$ induces 'metrics'\footnote{This are not formally metrics in general, since the root of $T_i$ may have finite length and $T_\textnormal{red}$ may not be stable, but we will overlook that in order not to introduce too many definitions.} $\lambda_i\in \lambda(T_i)$ on $T_i$ and a metric $\lambda_\textnormal{red}$ on $T_\textnormal{red}$ in obvious way. For a labelled tree $T$, a compatible universal choice of system of ends is a compatible universal choice of system of ends on each of its subtrees $T_i$ and on $T_\textnormal{red}$.\\
 Anyway, the fact that $T$ is a planar connected tree implies that the numbering of leaves of $T$ induces a unique numbering $$e_0(T_\textnormal{red}),\ldots,e_{d^R}(T_\textnormal{red})$$ of the leaves of (the subtrees of) $T_\textnormal{red}$, even if the latter is not connected (recall that the number $d^R$ comes from the definition of the reduced tuple $\vec{L}_\textnormal{red}$ of $\vec{L}$, see Definition \ref{defreducedtuple}). Assume now that $T\in \mathcal{T}_U^{d+1}(\vec{L})$, then  the connected components (i.e. the vertices) of $T_\textnormal{red}$ induce a splitting $$\{0,\ldots,d^R\}=\bigcup_{v\in V(T_\textnormal{red})}\Lambda_v$$ where $i\in \Lambda_v$ \label{Lambdav}  if and only if $\overline{L_i}\in \vec{L_v}$. Note that the above union is not disjoint in general. \\
 We have the following decomposition of $T\setminus T_\textnormal{red}$ indexed by the fundamental tuple $\vec{L}_F$ \label{fundamentaldecomposition}of $\vec{L}$ (see Definition \ref{deffundamentaltuple} for the definition of the fundamental tuple $\vec{L}_F$ of $\vec{L}$ and of the number $d^F$): $$T\setminus T_\red = \bigcup_{i=0}^{d^F} T_j^F$$ where for any $j\in \{0,\ldots, d^F\}$, $T_j^F$ is the union of all the subtrees of $T\setminus T_\red$ with edges unilabelled by $L_j^F$.
 
 \begin{defn}
     Given a labelled tree $T\in \mathcal{T}^{d+1}(\vec{L})$ we call the decomposition $$T=T_\red \cup \bigcup_{i=0}^{d^F} T_j^F$$ described above the \textit{fundamental decomposition} of the labelled tree $T$. Moreover, we define $T_\textnormal{uni}$ as the union of the trees $T^F_j$ from the fundamental decomposition of $T$.
 \end{defn}  \noindent Although $T_\textnormal{uni}$ is not a leafed tree in general, the planar structure of $T$ induces the following ordering of edges of $T_\textnormal{uni}$ which are exterior edges of $T$: we denote by $e^i_j(T_\textnormal{uni})$ as the $j$th edge (in clockwise order) of the subtree with label $\overline{L_i}$, that is
$$e_j^i(T_\textnormal{uni}):=e_{\sum_{k=0}^{i-1}\overline{m_k}+j}(T)$$ (see page \pageref{defofmis} for the definition of $\overline{m_k}$). In Figure \ref{fig:extree} we sketched an example of a labelled tree and of its reduced tree.
 \begin{center}
     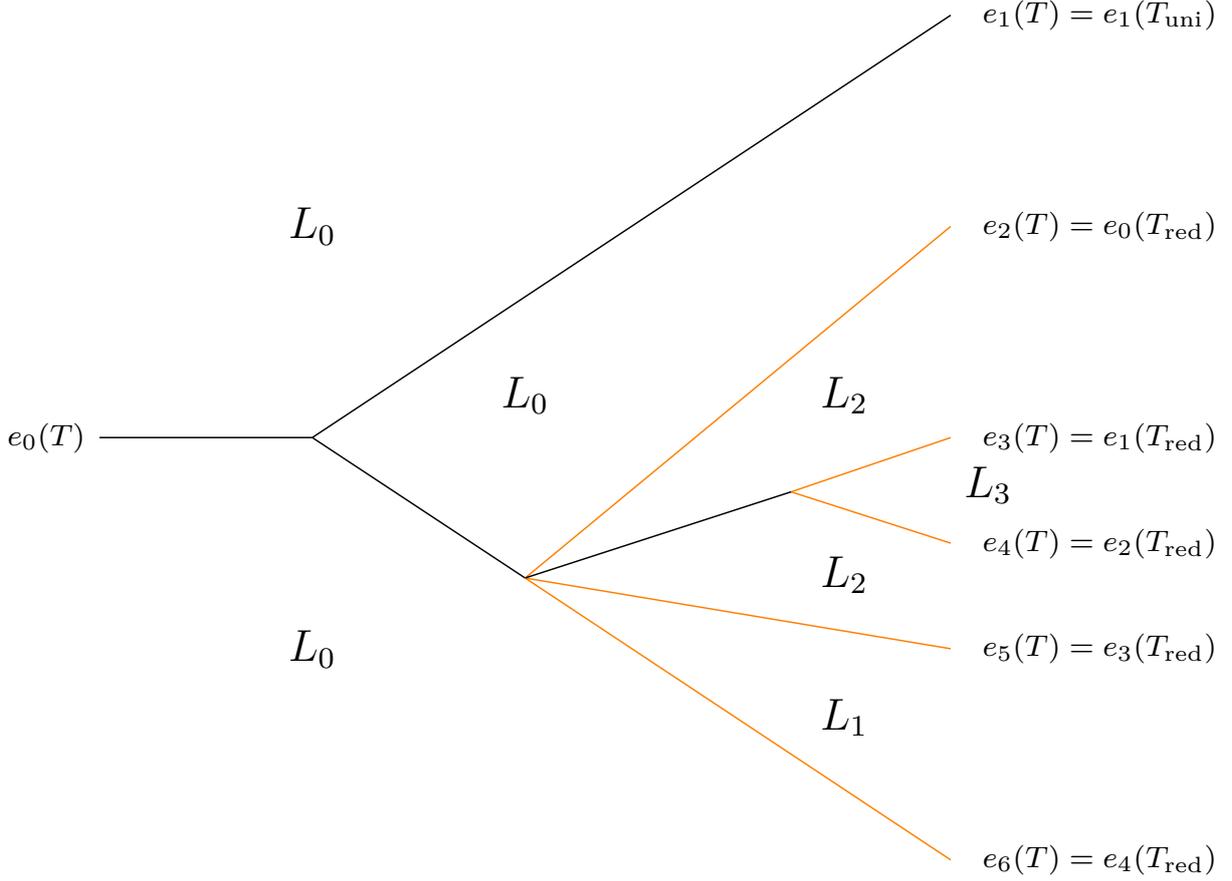
\begin{figure}
 \scalebox{1.4}{\begin{tikzpicture}
    \node at (0,-2) {$L_0$}; \node at (5,-2.65) {$L_1$}; \node at (5,-1.3) {$L_2$}; \node at (5, 0.4) {$L_2$}; \node at (6.35, -0.45) {$L_3$}; \node at (2,0.4) { $L_0$}; \node at (0,2) {$L_0$}; \node at (-2.5,0) {\tiny $e_0(T)$}; \node at (7.4,4) {\tiny $e_1(T)=e_1(T_\textnormal{uni})$}; \node at (7.4,2) {\tiny $e_2(T)=e_0(T_\textnormal{red})$}; \node at (7.4,0) {\tiny $e_3(T)=e_1(T_\textnormal{red})$}; \node at (7.4,-1) {\tiny $e_4(T)=e_2(T_\textnormal{red})$}; \node at (7.4,-2) {\tiny $e_5(T)=e_3(T_\textnormal{red})$}; \node at (7.4,-4) {\tiny $e_6(T)=e_4(T_\textnormal{red})$}; \draw[thick] (-2,0) -- (0,0); \draw[thick] (0,0) -- (6,4); \draw[thick] (0,0) -- (2,-1.33); \draw[thick,color=orange] (2,-1.33) -- (6,-4); \draw[thick, color=orange] (2,-1.33) -- (6,2); \draw[thick,color=orange] (2,-1.33) -- (6,-2); \draw[thick, color=orange] (4.5,-0.514) -- (6,-1); \draw[thick, color=orange] (4.5,-0.514) --(6,-0); \draw[thick] (2,-1.33)--(4.5,-0.514); \end{tikzpicture}}
         \caption{An element $T\in\mathcal{T}^{7}(\vec{L})$ for $\vec{L}=(L_0, L_0,L_2,L_3,L_2,L_1,L_0)$. We colored in orange the components of $T_\textnormal{red}$.  Note that in this case $\vec{L_\textnormal{red}}=((L_0,2+1),L_2,L_3,L_2,L_1)$ and $\vec{L^F}= (L_0,L_2,L_3,L_1)$.} \label{fig:extree}
     \end{figure}
    \end{center}
\begin{defn}
    We define $\lambda^{d+1}(\vec{L})$ as the space of metric trees $(T,\lambda)$ (see Section \ref{sectiontrees}), where $T\in \mathcal{T}^{d+1}(\vec{L})$ is a labelled tree and $\lambda \in \lambda(T)$, up to the relation that identifies identical metric trees, i.e. $(T_1,\lambda_1)\sim (T_2,\lambda_2)$ if and only if there is a planar isomorphism $$\varphi: T_1\setminus \{e\in E(T_1): \ \lambda_1(e)=0\}\to T_2\setminus\{e\in E(T_2): \ \lambda_2(e)=0\}$$ such that $\lambda_2(\varphi(e))=\lambda_1(e)$ for any remaining edge. 
\end{defn}
\begin{rem}
    Note that here we use the letter $\lambda$ to denote a set of metric trees, and not only of metrics on a tree.
\end{rem}
\noindent We will often write elements of $\lambda^{d+1}(\vec{L})$ simply as $(T,\lambda)$. Note that $\lambda^{d+1}(\vec{T})$ has an obvious conformal structure. We define $\lambda_U^{d+1}(\vec{L})\subset \lambda^{d+1}(\vec{L})$ as the subspace containing metric trees which can be represented by a tuple $(T,\lambda)$ with $\lambda\in \lambda_U(T)$, or equivalently $T\in \mathcal{T}_U^{d+1}(\vec{L})$. Notice that the space $\lambda_U^{d+1}(\vec{L})$ may of course have boundary, depending on the nature of the tuple $\vec{L}$.\\

%+========================================
%+========================================
%========================================

\subsection{Systems of ends for trees with Lagrangian labels}\label{seconsystemofends} We introduce the notion of system of ends (originally introduced in a different form in \cite{Charest2012SourceComplexes})\label{sistemofends}. 

\begin{defn} Let $\vec{L}=(L_0,\ldots, L_d)$ be a tuple of Lagrangians, $T\in \mathcal{T}^{d+1}(\vec{L})$ a labelled tree and $\lambda\in \lambda_U(T)$ a unilabelled metric on $T$.
    \begin{itemize}
        \item A system of ends for the metric tree $(T,\lambda)$ is a map $$s_{T,\lambda}: E(T)\to [0,\infty)$$ such that for any edge $e\in E(T)$ of $T$ it satisfies $s_{T,\lambda}(e)<\frac{\lambda(e)}{2}$ if $\lambda(e)>0$ and $s_{T,\lambda}(e)=0$ otherwise.
        \item  A system of ends for $T$ is a map $$s_T: E(T)\times \lambda_U(T)\to \R$$ such that $s_{T,\lambda}:=s_T(\lambda)$ is a system of ends for $(T,\lambda)$ and $s_T(e):\lambda(T)\to \R$ is smooth for any $e\in E_U(T)$.
        \item A system of ends for $\vec{L}$ is a smooth map\footnote{Here the map $s$ is well-defined if we think of any element of $\lambda_U^{d+1}$ as represented by a tree with $d-2$ internal edges, which is possible by definition (indeed, recall that we are dealing with stable trees here). The fact that $s$ lands in $\R^{2d-1}$ is due to the presence of the $d+1$ exterior edges.} $$s: \lambda^{d+1}_U(\vec{L})\to \R^{2d-1}$$ such that $s_{T,\lambda}:=s(T,\lambda)$ is a system of ends for any $(T,\lambda)\in \lambda^{d+1}_U(\vec{L})$.
        \item A universal choice of system of ends is a choice of system of ends for any tuple $\vec{L}$ of Lagrangians of any length $d+1$.
    \end{itemize}
\end{defn}

We recall that, given a tree $T$, in Section \ref{sectiontrees} we defined the space $\overline{\lambda(T)}$ by allowing metrics to take infinite value away from exterior edges. In the following, we will see an interior edge of $T$ with infinite length as a `breaking' of $T$ into two leafed trees. We will now describe gluing of genuine labelled trees (that is, not endowed with metrics). Let $\vec{L_1}$ and $\vec{L_2}$ be tuples of Lagrangians of length $d_1+1$ and $d_2+1$ respectively.
 \begin{defn} \label{compatibilitydef}
    We say that the tuple $\vec{L_1}$ is compatible with the tuple $\vec{L_2}$ if one of the following (mutually exclusive) conditions hold:
    \begin{enumerate}
        \item\label{firstcomp} we have $m_0^e(\vec{L_1})=0$ (see page \pageref{defofmis}) and there is $i\in \{1,\ldots, d_2\}$ such that $$L_0^1=L_i^2 \textnormal{   and   }L_{i+1}^2=L_{d_1}^1,$$
        \item\label{secondcomp} we have $m_0^e(\vec{L}_1)>0$, that is in particular $L_0^1=L_{d_1}^1$, and there is $i\in\{1,\ldots,d_2\}$ such that $$L_0^1=L_i^2=L_{i+1}^2=L_{d_1}^1.$$ 
    \end{enumerate}
    In both cases, we call such an $i\in \{1,\ldots, d_2\}$ where the compatibility is verified, an admissible index.
\end{defn}
\begin{rem}
    Note that formally the notion of compatibility is not symmetric.
\end{rem}
\noindent Assume that $\vec{L_1}$ is compatible with $\vec{L_2}$ and pick labelled trees $T_1\in \mathcal{T}^{d_1+1}(\vec{L_1})$ and $T_2\in \mathcal{T}^{d_2+1}(\vec{L_2})$. Let $i\in \{1,\ldots, d_2\}$ be an admissible index, then we define the tree $T_1\#_iT_2$ as the tree obtained by gluing the root $e_0(T_1)$ of $T_1$ to the $i$th exterior edge $e_i(T_2)$ of $T_2$ g. We call $T_1\#_iT_2$ the tree obtained by gluing $T_1$ to $T_2$ at the admissible index $i$. Note that we have $T_1\#_iT_2\in \mathcal{T}^{d_1+d_2+1}(\vec{L_1}\#_i\vec{L_2})$, where \label{cancelletto} $$\vec{L^1}\#_i\vec{L^2}:=\left(L_0^2,\ldots, L_i^2,L_1^1,\ldots, L_{d_1-1}^1,L_{i+1}^2,\ldots, L_{d_2}^2\right).$$ Moreover, we can see $T_1\setminus e_0(T_1)$ and $T_2\setminus e_i(T_2)$ as subsets of $T_1\#_iT_2$ in an obvious way; the new edge resulting from the gluing we just described will be denoted by $e_g\in E(T_1\#_iT_2)$.\\

We next describe gluing of metrics on labelled trees. Let $\lambda_1\in \lambda(T_1)$ and $\lambda_2\in \lambda(T_2)$ and consider $\rho\in [-1,0)$. Suppose again that $\vec{L_1}$ is compatible with $\vec{L_2}$ and let $i\in \{1,\ldots, d_2\}$ be an admissible index. Then we can define a metric $$\gamma^{T_1,T_2;i}(\rho, \lambda_1,\lambda_2)\in \lambda(T_1\#_iT_2)$$ on the labelled tree $T_1\#_iT_2$ via $$\gamma^{T_1,T_2;i}(\rho, \lambda_1,\lambda_2)|_{T_1\setminus e_0(T_1)} :\equiv \lambda_1, \ \ \gamma^{T_1,T_2;i}(\rho, \lambda_1,\lambda_2)|_{T_2\setminus e_i(T_2)} :\equiv \lambda_2 \textnormal{   and   } \gamma^{T_1,T_2;i}(\rho, \lambda_1,\lambda_2)(e_g):=-\ln(-\rho)$$ This way we defined for any admissible $i\in\{0,\ldots, d_2\}$ a map $$\gamma^{T_1,T_2;i}: [-1,0)\times \lambda(T_1)\times \lambda(T_2)\to \lambda(T_1\#_iT_2)$$ Note that such maps extend to maps $$\overline{\gamma^{T_1,T_2;i}}:  [-1,0]\times \lambda(T_1)\times \lambda(T_2)\to\overline{\lambda(T_1\#_iT_2)}$$ by declaring $\overline{\gamma^{T_1,T_2;i}}|_{0\times \lambda(T_1)\times \lambda(T_2)}$ to be the trivial gluing. This explains what we meant above by seeing interior edges of infinite length as `broken edges'.\\
Let now $\vec{L}$ be a tuple of Lagrangians of length $d+1$. By considering all the decompositions of all trees $T\in \mathcal{T}^{d+1}(\vec{L})$ into two and more leafed trees and by packing all the maps of the form $\overline{\gamma^{T_1,T_2;i}}$ as above we get boundary charts for $\overline{\lambda^{d+1}(\vec{L})}$ by looking at `small enough neighbourhoods of the trivial gluing', similarly to what is done for moduli spaces of punctured disks in \cite{Seidel2008FukayaTheory}. We skip the details of this construction. As it will be apparent from the constructions in Section \ref{mbfuk} we are particularly interested in $\overline{\lambda_U^{d+1}(\vec{L})}$, that is the closure of $\lambda_U^{d+1}(\vec{L})$ in $\overline{\lambda^{d+1}(\vec{L})}$, which inherits conformal structure and boundary charts from $\overline{\lambda^{d+1}(\vec{L})}$.\\

We can now introduce the notion of consistency for system of ends.

\begin{defn}
    A system of ends $s$ on $\vec{L}$ is said to be consistent if it extends smoothly to a map $\overline{s}$ on $\overline{\lambda^{d+1}_U(\vec{L})}$.
\end{defn}\noindent Note that the fact that our trees are labelled is not of central importance for the notion of consistency. In particular, the following result can be proved by constructing an explicit system of ends as in \cite{Charest2012SourceComplexes}.
\begin{lem}
    Universal choices of consistent system of ends exist.
\end{lem}
\begin{proof}
An example is constructed in \cite[Section 1.3]{Charest2012SourceComplexes}.
\end{proof}

Lastly, given a metric labelled tree $T\in \mathcal{T}^{d+1}(\vec{L})$ and a metric $\lambda\in \overline{\lambda(T)}$ on it, we will identify edges with intervals according to the metric and the orientation described above, that is:
\begin{itemize}
    \item The non-root leaves will be identified with $(-\infty,0]$, while the root with $[0,\infty)$;
    \item interior edges $e\in E^\textnormal{int}(T)$ with $\lambda(e)<\infty$ will be identified with $[0,\lambda(e)]$;
    \item interior edges $e\in E^\textnormal{int}(T)$ with $\lambda(e)=\infty$ will be identified with the disjoint union $[0,\infty)\sqcup(-\infty,0]$.
\end{itemize}
Given $e\in E(T)$ we will often write the point $t$ in the interval representation of $e$ as $e(t)$ for notational convenience. Given a system of ends $s$ on $\vec{L}$, we will abuse notation and often identiy $s_T(e)\in \mathbb R$ as an interval in the following way: $$s_T(e) = \left[\frac{\lambda(e)}{2}-s_T(e), \frac{\lambda(e)}{2}+s_T(e)\right]\subset e$$ if $e$ has finite length, $s_T(e) = [s_T(e), \infty)\subset e$ if $e$ is the root $e_0(T)$ and $s_T(e)=(-\infty, -s_T(e)]\subset e$ if $e$ is a non-root exterior edge.

\newpage

\section{A Morse-Bott model for $\mathcal{F}uk(X)$ and its weakly filtered structure}\label{mbfuk}
In this section, we construct a Morse-Bott model for the Fukaya category. Similar construction have appeared in \cite{Cornea2005ClusterHomology, Sheridan2011OnPants, Charest2012SourceComplexes}. The idea of using such a model to construct filtered Fukaya categories already appeared in \cite{Biran2023TriangulationCategories}. We fix once and for all a closed and connected symplectic manifold $(M,\omega)$. Recall from Section \ref{monotonelags} that given a positive Novikov serie $d\in \Lambda_0$ the set $\lagmd(M,\omega)$ consists of closed, connected and monotone Lagrangians $L\subset M$ with $d_L:=d$.

%====================================================
%+==================================================
%====================================================

\subsection{Source spaces: moduli spaces of clusters}\label{sourcespaces}
Let $d\geq 2$. We recall the definition of moduli spaces of (configurations of) disks with $d$ boundary marked points $\mathcal{R}^{d+1}$ following \cite[Chapter 9]{Seidel2008FukayaTheory}.\\ We denote by $D:=\{z\in \mathbb C \ : \ |z|\leq 1\}$ and $\partial D:=\{z\in \mathbb C \ : \ |z|=1\}$ the unit disk and the unit circle in the complex plane. We define $$\textnormal{conf}_{d+1}(\partial D)\subset (\partial D)^{d+1}$$ as the space of ordered tuples of $d+1$ distinct points on $\partial D$. An element of $\textnormal{conf}_{d+1}(\partial D)$ will be usually denoted as $(z_0,\ldots,z_d)$. We then define $$\mathcal{R}^{d+1}:=\frac{\textnormal{conf}_{d+1}(\partial D)}{\textnormal{Aut}(D)}$$ where $\textnormal{Aut}(D)$ denotes the group of automorphisms of the unit disk $D$, which acts on $\textnormal{conf}_{d+1}(\partial D)$ in the standard way. Recall that $\mathcal{R}^{d+1}$ is a smooth manifold of dimension $d-2$ and admits a compactification into a manifold with corners $\overline{\mathcal{R}^{d+1}}$ which realizes Stasheff's associahedron.

Let $\vec{L}=(L_0,\ldots,L_d)$ be a tuple of Lagrangians in $\mathcal{L}^{m,\textbf{d}}(M,\omega)$.

\begin{defn}
    We denote by $\mathcal{R}^{d+1}(\vec{L})$ the space of disks $r\in \mathcal{R}^{d+1}$ equipped with the Lagrangian label $\vec{L}$, that is, for any $i=0,\ldots,d$ we view the boundary arc of $r$ between $z_i$ and $z_{i+1}$ as labelled by $L_i$. An element $r\in \mathcal{R}^{d+1}$ will be called a (labelled) disk configuration.
\end{defn}\noindent The difference between $\mathcal{R}^{d+1}(\vec{L})$ and $\mathcal{R}^{d+1}$ is hence purely formal, but the a-priori specification of Lagrangian labels will be of help when defining moduli spaces of clusters.
\begin{defn}\label{defoftype}
    Let $r\in \mathcal{R}^{d+1}(\vec{L})$ be a disk configuration: the marked point $z_i$ will be called of type I if $L_{i-1}\neq L_i$, and of type II otherwise.
\end{defn}
\noindent Over $\mathcal{R}^{d+1}(\vec{L})$ we have a bundle $$\pi^{d+1}(\vec{L}): \mathcal{S}^{d+1}(\vec{L})\to \mathcal{R}^{d+1}(\vec{L})$$ where the fiber $S_r:=\left(\pi^{d+1}(\vec{L})\right)^{-1}(r)$ over $r\in \mathcal{R}^{d+1}(\vec{L})$ is the equivalence class of punctured disk representing the configuration $r$, with punctures at points of type I and smooth marked points at points of type II (see \cite{Seidel2008FukayaTheory} for the formal definition of disk with punctures). We call the family $\left(\pi^{d+1}(\vec{L})\right)_{d,\vec{L}}$ over all tuples of Lagrangians in $\lagmd(M,\omega)$ of any length a \textit{universal family of disks}.\\
%{\tiny Let $\vec{L}_\textnormal{red}:=(\overline{L}_0,\ldots,\overline{L}_{d^R})$ be the reduced tuple of $\vec{L}$, then we have a forgetful universal family $$\pi^{d+1}_\textnormal{red}(\vec{L}): \mathcal{S}^{d+1}(\vec{L})\to \mathcal{R}^{d+1}(\vec{L}_\textnormal{red})$$ (which may of course differ from $\pi^{d^R+1}(\vec{L}_\textnormal{red})$).}\\

In the remaining of this subsection we will describe a partial compactification for $\pi^{d+1}(\vec{L})$, which requires the notion of strip-like ends. We briefly recall from \cite{Seidel2008FukayaTheory} what strip-like ends are. Let $S$ be a punctured disk and let $z$ be either a point on $\partial S$ or a puncture (viewed in the compactification of $S$). A positive strip-like\label{striplikends} end for $S$ at $z$ is a proper holomorphic embedding $\epsilon: [0,\infty)\times [0,1]\to S$ such that $$\epsilon^{-1}(\partial S)= [0,\infty)\times \{0,1\} \text{    and   }\lim_{s\to \infty}\epsilon(s,\cdot)= z$$ A negative strip-like end is a strip-like end modeled on $(-\infty,0]$.  A choice of strip-like ends for $\pi^{d+1}(\vec{L})$ consists of proper embeddings $\epsilon_0: \mathcal{R}^{d+1}(\vec{L})\times [0,\infty)\times [0,1]\to \mathcal{S}^{d+1}(\vec{L})$ and $\epsilon_i: \mathcal{R}^{d+1}(\vec{L})\times (-\infty,0]\times [0,1]\to \mathcal{S}^{d+1}(\vec{L})$ for $i\in \{1,\ldots, d\}$ that restrict to strip-like ends on fibers at the associated $z_i(r)$ such that the images are pairwise disjoint. Given a choice $(\epsilon_0,\ldots, \epsilon_d)$ of strip-like ends on $\pi^{d+1}(\vec{L})$ we denote by $(\overline{\epsilon_1}, \ldots, \overline{\epsilon_{d^R}})$ the set of negative strip-like ends at the points $z_i$ of type I (i.e. at punctures). We will often omit strip-like ends from the notation and identify half-strips as subsets of disks. Moreover, for any $c\geq 0$ we will write $|s|\geq c$ for the subset $[c,\infty]\times [0,1]$ (resp. $(-\infty,-c]\times [0,1]$) of the standard positive half-strip (resp. negative half-strip). There is a notion of consistency for a universal choice of strip-like ends on the universal family $\left(\pi^{d+1}(\vec{L})\right)$ which requires strip-like ends to be compatible with breaking and gluing of disks (see \cite[Section 9g]{Seidel2008FukayaTheory}). We do not recall here the definition but require any of our choices of strip-like ends to be consistent.

Let $T\in \mathcal{T}^{d+1}(\vec{L})$ be a $d$-leafed tree labeled by $\vec{L}$. Recall (see Section \ref{sectiontrees}) that elements of $\mathcal{T}^{d+1}(\vec{L})$ are assumed to be stable (that is, with no vertices of valency less than $3$). A $T$-disk configuration is a tuple $$\left((r_v)_{v\in V(T)}, (z_{h(e)},z_{t(e)})_{e\in E^\textnormal{int}(T)}\right)$$ where $r_v\in \mathcal{R}^{|v|}(\vec{L_v})$ and $z_{h(e)}$ (resp. $z_{t(e)})$ is a distinguished point in the tuple $r_{h(e)}$ (resp. $r_{t(e)}$). We denote by $\mathcal{R}^T$ the space of $T$-disk configurations and we set $$\overline{\mathcal{R}^{d+1}(\vec{L})}:=\bigcup_{T\in \mathcal{T}^{d+1}(\vec{L})}\mathcal{R}^T.$$
The following result is Lemma 9.2 in \cite{Seidel2008FukayaTheory}.
\begin{lem}\label{disc2}
    The space $\overline{\mathcal{R}^{d+1}(\vec{L})}$ admits the structure of a smooth manifold with corners. Moreover, it realizes Stasheff's $(d-2)$-associahedron.
\end{lem}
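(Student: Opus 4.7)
The plan is to reduce the statement to the classical compactification of the moduli space of discs with boundary marked points, since the Lagrangian labeling is purely formal data attached to configurations. More precisely, since $\mathcal{R}^{d+1}(\vec{L})$ and $\mathcal{R}^{d+1}$ agree as topological spaces (with the label $\vec{L}$ playing no role in the defining equivalence relation), and since the $T$-strata $\mathcal{R}^T$ are indexed by trees in $\mathcal{T}^{d+1}(\vec{L})$ whose underlying unlabeled trees are precisely the stable $d$-leafed trees, there is a natural bijection $\overline{\mathcal{R}^{d+1}(\vec{L})} \cong \overline{\mathcal{R}^{d+1}}$ compatible with stratification. Thus it suffices to verify the statement for the unlabeled moduli and transport the smooth structure back.

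The stratification would be described as follows. The top-dimensional stratum corresponds to the tree with a single vertex of valency $d+1$ and gives the open locus $\mathcal{R}^{d+1}$, which is a manifold of dimension $d-2$. For a general $T\in \mathcal{T}^{d+1}$ the stratum factors as $\mathcal{R}^T = \prod_{v\in V(T)} \mathcal{R}^{|v|}$, whose codimension in $\overline{\mathcal{R}^{d+1}}$ equals $|E^{\mathrm{int}}(T)|$. In particular, codimension-one strata correspond to trees with exactly one interior edge, i.e.\ to the breakings of a polygon into two sub-polygons along a chord, which is exactly the combinatorics of Stasheff's associahedron.

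Next I would construct smooth charts near each stratum using the consistent choice of strip-like ends for $\pi^{d+1}$ recalled above. Given $T\in \mathcal{T}^{d+1}$ and a point of $\mathcal{R}^T$, for every interior edge $e\in E^{\mathrm{int}}(T)$ one picks a small gluing parameter $\rho_e\in [0,\varepsilon)$ and uses the strip-like ends at the two punctures associated to $e$ to excise half-strips of length $-\ln \rho_e$ and glue their complements along the remaining finite strip. This produces a family of smooth discs and defines a map
\begin{equation}
\mathcal{R}^T \times [0,\varepsilon)^{E^{\mathrm{int}}(T)} \longrightarrow \overline{\mathcal{R}^{d+1}},
\end{equation}
which one checks is a homeomorphism onto a neighbourhood of the chosen point, and smooth in the interior. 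Transition maps between such charts coming from two different stable trees with a common refinement agree up to smooth change of coordinates precisely because of the consistency of the chosen universal system of strip-like ends. This gives $\overline{\mathcal{R}^{d+1}}$ the structure of a smooth manifold with corners, with the corners of codimension $k$ corresponding to trees with $k$ interior edges.

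The main (and only nontrivial) difficulty is the verification that these gluing charts fit together smoothly, i.e.\ the construction of the smooth atlas, which requires care with the choice of strip-like ends and the compatibility relations across iterated breakings. This is a well-established argument (worked out in detail in Seidel's book, and directly quoted here as Lemma 9.2 of \cite{Seidel2008FukayaTheory}), so in practice I would simply invoke that result and add only the observation that the labeling by $\vec{L}$ does not affect any step of the construction. Finally, the identification with Stasheff's $(d-2)$-associahedron follows because the face poset of the constructed manifold with corners is precisely the poset of stable $d$-leafed trees under edge-contraction, which is the combinatorial definition of the associahedron.
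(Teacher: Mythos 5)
Your proposal is correct and takes essentially the same route as the paper: the paper simply quotes this as Lemma 9.2 of \cite{Seidel2008FukayaTheory}, with the (implicit) observation that the Lagrangian labels are purely formal decorations and do not affect the construction. Your sketch of the strata, the gluing charts via strip-like ends, and the identification of the face poset with that of the associahedron is an accurate summary of what the cited result provides.
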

\noindent The partial compatictification of $\pi^{d+1}$ is  $$\overline{\pi^{d+1}(\vec{L})}: \overline{\mathcal{S}^{d+1}(\vec{L})}\to \overline{\mathcal{R}^{d+1}(\vec{L})}$$ where fibers are disjoint unions of nodal disks representing elements of the base.\\

We mimic an idea contained in \cite{Charest2012SourceComplexes} in order to construct moduli spaces of cluster of punctured disks with marked points. Basically, what we do to define source spaces for the Floer maps defining the $A_\infty$-maps of our Fukaya category is adding a collar neighbourhood to certain boundary components of $\overline{\mathcal{R}^{d+1}(\vec{L})}$. Recall from Section \ref{sectionlagtrees} that given a labelled tree $T\in \mathcal{T}^{d+1}(\vec{L})$ we defined the space $$\lambda_U(T)=\{\lambda\in \lambda(T) \ : \ \lambda(e)= 0 \textnormal{ for any non unilabelled }e\in E^\textnormal{int}(T)\}.$$ Moreover, we defined $\mathcal{T}^{d+1}_U(\vec{L})\subset \mathcal{T}^{d+1}(\vec{L})$ as the subset of labelled trees with no non unilabelled interior edges. We define $$\mathcal{R}^{d+1}_C(\vec{L}):=\bigcup_{T\in \mathcal{T}^{d+1}_U(\vec{L})}\mathcal{R}^T\times \lambda(T).$$ An element of $\mathcal{R}^{d+1}_C(\vec{L})$ will be usually denoted as $(r,T,\lambda)$, where $r\in \mathcal{R}^T$ is a $T$-disk configuration and $\lambda\in \lambda(T)$ is a metric on the labelled tree $T$. Notice that $\mathcal{R}^{d+1}_C(\vec{L})$ is the interior of $$\bigsqcup_{T\in \mathcal{T}^{d+1}(\vec{L})} \mathcal{R}^T\times \lambda_U(T)$$ We also define $$\overline{\mathcal{R}^{d+1}_C(\vec{L})}:= \bigsqcup_{T\in \mathcal{T}^{d+1}(\vec{L})} \mathcal{R}^T\times \overline{\lambda_U(T)}.$$
\noindent To help the reader understand the difference between the various moduli spaces defined above, we add a graphical description of the construction of part of the boundary of $\mathcal{R}^{d}_C(\vec{L})$ in the case $d=4$ for a particular choice of tuple of Lagrangians in Figure \ref{fig:enter-label}.

%%%%%%%%%%%%%%%%%%%%%%%% DRAWING OF BOUNDARY COMPONENT OF R^5_C(L) $%%%%%%%%%%%%%%%%%%%%%%%%%%%%%%%%%%%%%%%%%%%%%%$

\definecolor{ffzzqq}{rgb}{1,0.6,0}
\definecolor{ffqqqq}{rgb}{1,0,0}
\definecolor{qqqqff}{rgb}{0,0,1}
\definecolor{ffzzqq}{rgb}{1,0.6,0}
\definecolor{ffqqqq}{rgb}{1,0,0}
\definecolor{ffffff}{rgb}{1,1,1}
\hspace{-15cm}
\begin{figure}
    \centering
    \captionsetup[subfigure]{oneside,margin={0cm,-0.3cm}}
    \begin{subfigure}[b]{0.3\textwidth}
    \centering 
    \begin{tikzpicture}[line cap=round,line join=round,>=triangle 45,x=1cm,y=1cm]
\clip(-8.625682348780519,-1.0233369838667696) rectangle (-2.667751316241792,4.650381790935983);
\fill[line width=0pt,color=ffzzqq,fill=ffzzqq,fill opacity=0.1] (-3.28,-0.17) -- (-8.28,-0.17) -- (-9.825084971874734,-4.925282581475764) -- (-5.78,-7.86420884293813) -- (-1.7349150281252645,-4.925282581475766) -- cycle;
\draw [line width=1pt,color=ffqqqq] (-8.28,-0.17)-- (-3.28,-0.17);
\draw [line width=1pt,color=ffqqqq] (-3.28,-0.17)-- (-3.1256541807559914,-0.6455810846552836);
\draw [line width=1pt,color=ffqqqq] (-8.28,-0.17)-- (-8.427815730455455,-0.6476510335275327);
\draw [line width=1pt] (-7.260720578460138,0.8494742492285339)-- (-6.260720578460138,0.8494742492285339);
\draw [line width=1pt] (-6.260720578460138,0.8494742492285339)-- (-5.555826121443226,1.5587864544434398);
\draw [line width=1pt] (-6.260720578460138,0.8494742492285339)-- (-5.566086589378064,0.13011090486636423);
\draw [line width=1pt] (-5.9134035839191,0.48979257704744905)-- (-5.553582022437874,0.836964643742355);
\draw [line width=1pt] (-5.9134035839191,0.48979257704744905)-- (-5.4134035839191,0.48979257704744905);
\draw (-6.751986723706502,1.4556248771822508) node[anchor=north west] {\tiny $L_0$};
\draw (-6.741460343790355,0.7819359533455118) node[anchor=north west] {\tiny $L_1$};
\draw (-6.138297190666371,1.1903604688174016) node[anchor=north west] {\tiny $L_1$};
\draw (-5.620400273517401,0.9082531215426532) node[anchor=north west] {\tiny $L_2$};
\draw (-5.630927262636936,0.497724914016343) node[anchor=north west] {\tiny $L_3$};
\end{tikzpicture}
\caption{\small A detail of a graphical description of the moduli space $\overline{\mathcal{R}^5(\vec{L})}$ (cfr. \cite{Seidel2008FukayaTheory}).}
\end{subfigure}
\hspace{1cm}
\captionsetup[subfigure]{oneside,margin={0.8cm,-1.3cm}}
\begin{subfigure}[b]{0.3\textwidth} 
 \centering 
\begin{tikzpicture}[line cap=round,line join=round,>=triangle 45,x=1cm,y=1cm]
\clip(-9.543142683810132,-1.0617529782629214) rectangle (-0.7450823198346074,7.316611926653659);
\fill[line width=0pt,color=ffzzqq,fill=ffzzqq,fill opacity=0.1] (-3.28,-0.17) -- (-8.28,-0.17) -- (-9.825084971874734,-4.925282581475764) -- (-5.78,-7.86420884293813) -- (-1.7349150281252645,-4.925282581475766) -- cycle;
\fill[line width=0pt,fill=ffzzqq,fill opacity=0.1] (-8.28,-0.17) -- (-3.28,-0.17) -- (-3.275264570268226,1.8299943939184569) -- (-8.280002660886453,1.82999999999823) -- cycle;
\draw [line width=1pt, dotted, color=ffqqqq] (-8.28,-0.17)-- (-3.28,-0.17);
\draw [line width=1pt,color=qqqqff] (-3.28,-0.17)-- (-3.1256541807559914,-0.6455810846552836);
\draw [line width=1pt,color=qqqqff] (-8.28,-0.17)-- (-8.427815730455455,-0.6476510335275326);
\draw [line width=1pt] (-7.260720578460138,0.8494742492285339)-- (-6.260720578460138,0.8494742492285339);
\draw [line width=1pt] (-6.260720578460138,0.8494742492285339)-- (-5.555826121443226,1.5587864544434398);
\draw [line width=1pt] (-6.260720578460138,0.8494742492285339)-- (-5.566086589378064,0.13011090486636423);
\draw [line width=1pt] (-5.9134035839191,0.48979257704744905)-- (-5.553582022437874,0.836964643742355);
\draw [line width=1pt] (-5.9134035839191,0.48979257704744905)-- (-5.4134035839191,0.48979257704744905);
\draw (-6.752945094881498,1.4542426710447218) node[anchor=north west] {\tiny $L_0$};
\draw (-6.745172956750777,0.7848009961034031) node[anchor=north west] {\tiny $L_1$};
\draw (-6.13469348611009,1.1843949415152712) node[anchor=north west] {\tiny $L_1$};
\draw (-5.627175132096949,0.901930018942945) node[anchor=north west] {\tiny $L_2$};
\draw (-5.633757204057756,0.5393074766654788) node[anchor=north west] {\tiny $L_3$};
\draw [line width=1pt,color=qqqqff] (-8.28,-0.17)-- (-8.280002660886453,1.82999999999823);
\draw [line width=1pt,color=brown, <->] (-4.28,-0.17)-- (-4.280002660886453,1.82999999999823);
\draw (-4.280002660886453,1.15) node[anchor=west] {\color{brown} \tiny $\lambda_U(T)$};
\draw [line width=1pt,color=qqqqff] (-3.28,-0.17)-- (-3.275264570268226,1.8299943939184569);
\draw [line width=1pt,color=ffffff] (-8.280002660886453,1.82999999999823)-- (-3.275264570268226,1.8299943939184569);
\draw [line width=1pt, color=qqqqff] (-3.275264570268226,1.8299943939184569)-- (-8.280002660886453,1.82999999999823);
\end{tikzpicture}
\caption{\small A detail of a graphical description of the moduli space of clusters $\mathcal{R}^5_C(\vec{L})$. Note that this space is open. The dotted red line is there only to highlight where the boundary component of $\overline{\mathcal{R}^5(\vec{L})}$ was, before the addition of the $\lambda_U(T)$ factor (in brown).}
\end{subfigure}
\vspace{-3cm}

\hspace{-1cm}
\captionsetup[subfigure]{oneside,margin={0.8cm,-1.3cm}}
\begin{subfigure}[b]{0.3\textwidth}
\centering
\begin{tikzpicture}[line cap=round,line join=round,>=triangle 45,x=1cm,y=1cm]
\clip(-9.543142683810132,-1.0617529782629214) rectangle (-0.7450823198346074,7.316611926653659);
\fill[line width=0pt,color=ffzzqq,fill=ffzzqq,fill opacity=0.1] (-3.28,-0.17) -- (-8.28,-0.17) -- (-9.825084971874734,-4.925282581475764) -- (-5.78,-7.86420884293813) -- (-1.7349150281252645,-4.925282581475766) -- cycle;
\fill[line width=2pt,color=ffzzqq,fill=ffzzqq,fill opacity=0.1] (-8.28,-0.17) -- (-3.28,-0.17) -- (-3.275264570268226,1.8299943939184569) -- (-8.280002660886453,1.82999999999823) -- cycle;

\draw [line width=1pt, dotted, color=ffqqqq] (-8.28,-0.17)-- (-3.28,-0.17);
\draw [line width=1pt,color=ffqqqq] (-3.28,-0.17)-- (-3.1256541807559914,-0.6455810846552836);
\draw [line width=1pt,color=ffqqqq] (-8.28,-0.17)-- (-8.427815730455455,-0.6476510335275326);
\draw [line width=1pt] (-7.260720578460138,0.8494742492285339)-- (-6.260720578460138,0.8494742492285339);
\draw [line width=1pt] (-6.260720578460138,0.8494742492285339)-- (-5.555826121443226,1.5587864544434398);
\draw [line width=1pt] (-6.260720578460138,0.8494742492285339)-- (-5.566086589378064,0.13011090486636423);
\draw [line width=1pt] (-5.9134035839191,0.48979257704744905)-- (-5.553582022437874,0.836964643742355);
\draw [line width=1pt] (-5.9134035839191,0.48979257704744905)-- (-5.4134035839191,0.48979257704744905);
\draw (-6.752945094881498,1.4542426710447218) node[anchor=north west] {\tiny $L_0$};
\draw (-6.745172956750777,0.7848009961034031) node[anchor=north west] {\tiny $L_1$};
\draw (-6.13469348611009,1.1843949415152712) node[anchor=north west] {\tiny $L_1$};
\draw (-5.627175132096949,0.901930018942945) node[anchor=north west] {\tiny $L_2$};
\draw (-5.633757204057756,0.5393074766654788) node[anchor=north west] {\tiny $L_3$};
\draw [line width=1pt,color=ffqqqq] (-8.28,-0.17)-- (-8.280002660886453,1.82999999999823);
\draw [line width=1pt,color=ffqqqq] (-3.28,-0.17)-- (-3.275264570268226,1.8299943939184569);
\draw [line width=1pt,color=qqqqff] (-3.275264570268226,1.8299943939184569)-- (-8.280002660886453,1.82999999999823);
\end{tikzpicture}
\caption{\small A detail of a graphical description of the moduli space $\sqcup_{T\in \mathcal{T}^5(\vec{L})}\mathcal{R}^T\times \lambda_U(T)$.}
\end{subfigure}
\hspace{2cm}
\begin{subfigure}[b]{0.3\textwidth}
\centering
\begin{tikzpicture}[line cap=round,line join=round,>=triangle 45,x=1cm,y=1cm]
\clip(-9.543142683810132,-1.0617529782629214) rectangle (-0.7450823198346074,7.316611926653659);
\fill[line width=0pt,color=ffzzqq,fill=ffzzqq,fill opacity=0.1] (-3.28,-0.17) -- (-8.28,-0.17) -- (-9.825084971874734,-4.925282581475764) -- (-5.78,-7.86420884293813) -- (-1.7349150281252645,-4.925282581475766) -- cycle;
\fill[line width=2pt,color=ffzzqq,fill=ffzzqq,fill opacity=0.1] (-8.28,-0.17) -- (-3.28,-0.17) -- (-3.275264570268226,1.8299943939184569) -- (-8.280002660886453,1.82999999999823) -- cycle;

\draw [line width=1pt, dotted, color=ffqqqq] (-8.28,-0.17)-- (-3.28,-0.17);
\draw [line width=1pt,color=ffqqqq] (-3.28,-0.17)-- (-3.1256541807559914,-0.6455810846552836);
\draw [line width=1pt,color=ffqqqq] (-8.28,-0.17)-- (-8.427815730455455,-0.6476510335275326);
\draw [line width=1pt] (-7.260720578460138,0.8494742492285339)-- (-6.260720578460138,0.8494742492285339);
\draw [line width=1pt] (-6.260720578460138,0.8494742492285339)-- (-5.555826121443226,1.5587864544434398);
\draw [line width=1pt] (-6.260720578460138,0.8494742492285339)-- (-5.566086589378064,0.13011090486636423);
\draw [line width=1pt] (-5.9134035839191,0.48979257704744905)-- (-5.553582022437874,0.836964643742355);
\draw [line width=1pt] (-5.9134035839191,0.48979257704744905)-- (-5.4134035839191,0.48979257704744905);
\draw (-6.752945094881498,1.4542426710447218) node[anchor=north west] {\tiny $L_0$};
\draw (-6.745172956750777,0.7848009961034031) node[anchor=north west] {\tiny $L_1$};
\draw (-6.13469348611009,1.1843949415152712) node[anchor=north west] {\tiny $L_1$};
\draw (-5.627175132096949,0.901930018942945) node[anchor=north west] {\tiny $L_2$};
\draw (-5.633757204057756,0.5393074766654788) node[anchor=north west] {\tiny $L_3$};
\draw [line width=1pt,color=ffqqqq] (-8.28,-0.17)-- (-8.280002660886453,1.82999999999823);
\draw [line width=1pt,color=ffqqqq] (-3.28,-0.17)-- (-3.275264570268226,1.8299943939184569);
\draw [line width=1pt,color=ffqqqq] (-3.275264570268226,1.8299943939184569)-- (-8.280002660886453,1.82999999999823);
\end{tikzpicture}
\caption{\small A detail of a graphical description of the compactified moduli space od clusters $\overline{\mathcal{R}^5_C(\vec{L})}$.}
\end{subfigure}
    \caption{A graphical description of the construction of the compactification $\overline{\mathcal{R}^5_C(\vec{L})}$ of the moduli space of cluster $\mathcal{R}^5_C(\vec{L})$ for $\vec{L}=(L_0,L_1,L_2,L_3,L_1)$ starting from $\overline{\mathcal{R}^5(\vec{L})}$ (cfr. the drawing on page 121 of \cite{Seidel2008FukayaTheory}). For simplicity, we zoomed only on the boundary component corresponding to the depicted tree, which has one interior edge unilabelled by the Lagrangian $L_1$. We adopted the convention that {\color{red} red} lines correspond to closed and {\color{blue} blue} correspond to open.}
    \label{fig:enter-label}
\end{figure}

We define the bundle of clusters disks labelled by $\vec{L}$ $$\pi^{d+1}_C(\vec{L}): \mathcal{S}^{d+1}_C(\vec{L})\to \mathcal{R}^{d+1}_C(\vec{L})$$ where the fiber $S_{r,T,\lambda}:=(\pi^{d+1}_C(\vec{L}))^{-1}(r,T,\lambda)$ over an element $(r,T,\lambda)\in \mathcal{R}^{d+1}_C(\vec{L})$ is obtained by modifying $S_r\in \mathcal{S}^{d+1}(\vec{L})$ in the following way: any nodal point of $S_r$ is replaced by a line segment of length $\lambda(e)$ oriented as $e$, while at any marked point (of type II) we attach a semi-infinite line segment $(-\infty,0]$ or $[0,\infty)$ depending on weather the marked point is an entry or an exit. \\

To describe partial compactifications of the universal families $\pi^{d+1}_C(\vec{L})$, we introduce strip-like ends for clusters and then define gluing. Let $d\geq 2$ and $\vec{L}=(L_0,\ldots,L_d)$ be a tuple of Lagrangians in $\lagmd(M,\omega)$. A choice of strip-like ends on $\pi^{d+1}_C(\vec{L})$ is a choice of strip-like ends on each $\pi^{|v|+1}(\vec{L_v})$ for any $T\in \mathcal{T}^{d+1}(\vec{L})$ and any $v\in V(T)$, which is smooth in the following sense: if $(r,T,\lambda)\in \mathcal{R}^{d+1}_C(\vec{L})$ is a configurations such that there is an interior edge $e\in E^\textnormal{int}(T)$ of $T$ such that $\lambda(e)=0$, then both marked points $z_{h(e)}\in S_{r,T,\lambda}(h(e))$ and $z_{t(e)}\in S_{r,T,\lambda}(t(e))$ do not lie in the image of a strip-like end. A universal choice of strip-like ends for clusters is a choice of strip-like ends on $\pi^{d+1}(\vec{L})$ for any tuple of Lagrangians in $\lagmd(M,\omega)$ of any finite length $d+1$. We will always assume that our universal choices of strip-like ends for clusters are consistent, that is, vertex-wise consistent. \\

We fix once and for all a consistent universal choice of system of ends as well as a consistent universal choice of strip-like ends for clusters on $\mathcal{L}^{m,\textbf{d}}(M,\omega)$. We can now adapt the gluing procedure for punctured disks described in \cite[Section (9e)]{Seidel2008FukayaTheory} to the case of clusters of disks. The aim of the following is not really to precisely describe gluing, but more to set up the necessary notation for later stages in this paper.\\
Let $d_1,d_2\geq 2$ and consider two tuples $$\vec{L^1}=(L_0^1,\ldots, L_{d_1}^1) \textnormal{   and   } \vec{L^2}=(L_0^2,\ldots, L_{d_2}^2)$$ of Lagrangians in $\lagmd(M,\omega)$, whose reduced tuples we denote by $\vec{L^1}_\textnormal{red}$ and $\vec{L^2}_\textnormal{red}$.  Consider $$(r_1,T_1,\lambda_1)\in \mathcal{R}^{d_1+1}(\vec{L^1}) \textnormal{   and   }(r_2,T_2,\lambda_2)\in  \mathcal{R}^{d_2+1}(\vec{L^2})$$ and let $\rho \in [-1,0)$. As in the case of gluing of labelled trees in Section \ref{seconsystemofends}, in order to define gluing of clusters we consider the cases $m_0^e(\vec{L^1})=0$ and $m_0^e(\vec{L^1})>0$ (see page \pageref{defofmis} for the definition of $m_0^e$) separately:

\begin{enumerate}
    \item First, we assume $m_0^e(\vec{L^1})=0$, that is $L_0^1\neq L_0^d$. Consider an admissible (see Definition \ref{compatibilitydef}) index $i\in \{1,\ldots, d_2\}$ and denote by $j\in \{1,\ldots, d_2^R\}$ the index such that $L_i^2=\overline{L_j^2}$ (that is, the position of $L_i^2$ in the reduced tuple $\vec{L^2}_\textnormal{red}$, see page \pageref{defreducedtuple}). The gluing of $(r_1,T_1,\lambda_1)$ with $(r_2,T_2,\lambda_2)$ at $i$ with length $\rho$ is defined as the tuple $$\left(r,T_1\#_iT_2,\gamma^{T_1,T_1;i}[-1,\lambda_1,\lambda_2)\right)\in \mathcal{R}^{d_1+d_2+1}(\vec{L_1}\#_i\vec{L^2})$$ where:\begin{itemize}
        \item $T_1\#_iT_2$ is the tree obtained by gluing the root of $T_1$ to the $i$th exterior edge of $T_2$ (see page \pageref{compatibilitydef}),
        \item $\gamma^{T_1,T_1,i}$ is the gluing map defined in Section \ref{sectionlagtrees} and 
        \item $r\in \mathcal{R}^{T_1\#_iT_2}$ is defined as follows: the map $\gamma^{T_1,t_2,i}$ applied with length $-1$ identifies (as it produces an edge with vanishing length by definition) the unique vertex $v_1$ of $T_1$ attached to the root $e_0(T_1)$ with the unique vertex $v_2$ of $T_2$ attached to the exterior edge $e_j((T_2)_\textnormal{red})$\footnote{Another characterization of $v_1$ and $v_2$ using the partition $(\Lambda_v)_v$ introduced on page \pageref{Lambdav} is the following: $v_1\in V((T_1)_\textnormal{red})$ is the unique vertex such that $0,d_1^R\in \Lambda_{v_1}$ and $v_2\in V((T_2)_\textnormal{red})$ is the unique vertex such that $j,j+1\in \Lambda_{v_2}$.} to produce a vertex $v_g$ of $T_1\#_iT_2$; the configuration $r$ is the cluster configuration that: \begin{itemize}
            \item agrees with $r_1$ on $T_1\setminus v_1\subset T_1\#_iT_2$,
            \item agrees with $r_2$ on $T_2\setminus v_2\subset T_1\#_iT_2$,
            \item and on $v_g$ represents the puntured disks obtained by gluing the exit of $S_{r_1,T_1,\lambda_1}(v_1)$ to the $i$th entry of $S_{r_2,T_2,\lambda_2}(v_2)$ with gluing length $\rho$ (see \cite[Section (9e)]{Seidel2008FukayaTheory}).
        \end{itemize} 
    \end{itemize}  
    \item Assume now $m_0^e(\vec{L^1})>0$, that is in particular $L_0^1=L_{d_1}^1$, and consider indices $i$ and $j$ as above. The gluing of $(r_1,T_1,\lambda_1)$ with $(r_2,T_2,\lambda_2)$ at $i$ with length $\rho$ is defined as the tuple $$\left(r,T_1\#_iT_2,\gamma^{T_1,T_1,i}(-\ln(-\rho),\lambda_1,\lambda_2)\right)$$ where $T_1\#_iT_2$ and $\gamma^{T_1,T_1,i}$ are as above and $r$ agrees with $r_1$ on $T_1\setminus e_0(T_1)\subset T_1\#_iT_2$ and agrees with $r_2$ on $T_2\setminus e_0(T_2)\subset T_1\#_iT_2$. 
\end{enumerate}

\noindent In summary, we defined maps $$\gamma^{\vec{L^1},\vec{L^2},i}: [-1.0)\times \mathcal{R}^{d_1+1}(\vec{L^1}) \times \mathcal{R}^{d_2+1}(\vec{L^2})\to \mathcal{R}^{d_1+d_2}_C(\vec{L^1}\#_i\vec{L^2})$$ for any tuples of Lagrangians as above and any admissible index $i\in \{1,\ldots, d_2\}$. It is easy to see that those maps extend to maps $$\overline{\gamma^{\vec{L^1},\vec{L^2},i}}: [-1,0]\times   \mathcal{R}^{d_1+1}(\vec{L^1}) \times \mathcal{R}^{d_2+1}(\vec{L^2})\to \overline{\mathcal{R}^{d_1+d_2}_C(\vec{L^1}\#_i\vec{L^2})}$$ by trivial gluing.\\

Let now $\vec{L}:=(L_0,\ldots,L_d)$ be a tuple of Lagrangians in $\lagmd(M,\omega)$, and pick a labelled tree $T\in \mathcal{T}^{d+1}(\vec{L})$ and a number $k\in \{0,\ldots,d-2\}$. We define a gluing map $$\gamma^{T,k}: [-1,0)^{|E^\textnormal{int}_F(T)|+k}\times \mathcal{R}^T\times \lambda^k_U(T)\to \mathcal{R}^{d+1}_C(\vec{L})$$ where $E^\textnormal{int}_F(T)= E(T)\setminus E_U(T)$ is the set of non-unilabelled edges of $T$ and $\lambda^k_U(T)$ is the set of unilabelled metrics on $T$ such that exactly $k$ interior edges have infinite length (both concepts were introduced on page \pageref{sectionlagtrees}), by composing various maps of the form $\gamma^{\vec{L^1},\vec{L^2},i}$ for $k-1$ admissible tuples of Lagrangian decomposing $\vec{L}$. For instance, if $T\in \mathcal{T}^{d+1}_U(\vec{L})$ is unilabelled, then there are tuples $\vec{L_1}$, of length $d_1+1$, and $\vec{L^2}$, of length $d_2+1$, of Lagrangians in $\lagmd(M,\omega)$ such that $d_1+d_2=d+2$ and an index $i\in \{0,\ldots, d_2\}$ such that $\gamma^{T,1}= \gamma^{\vec{L_0},\vec{L_1},i}$.\\
Note that the maps $\gamma^{T,k}$ are well-defined since we are working with consistent choices of strip-like end and of system of ends, and the two are independent concepts. Each of these maps extends to maps of the form $$\overline{\gamma^{T,k}}:[-1,0]^{|E^\textnormal{int}_F(T)|+k}\times \mathcal{R}^T\times \lambda^k_U(T)\to \overline{\mathcal{R}^{d+1}_C(\vec{L})}$$ again by trivial gluing.

By obvious considerations and repeatedly applying \cite[Lemma 9.2]{Seidel2008FukayaTheory} we see that for any $(d+1)$-tuple $\vec{L}$ of Lagrangians in $\lagmd(M,\omega)$ there are maps $\overline{\gamma^{\vec{L^1},\vec{L^2},i}}$ whose restriction to small enough neighbourhood of the trivial gluing in the domain define boundary charts for $\mathcal{R}^{d+1}_C(\vec{L})$.

\begin{lem}\label{lemcluster}
    The space $\mathcal{R}^{d+1}_C(\vec{L})$ admits the structure of a smooth manifold of dimension $d-2$. Moreover, $\overline{\mathcal{R}^{d+1}_C(\vec{L})}$ admits the structure of a smooth manifold with corners which realizes Stasheff's associahedron.
\end{lem}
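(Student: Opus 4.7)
The plan is to upgrade Lemma \ref{disc2} (\cite[Lemma 9.2]{Seidel2008FukayaTheory}) by incorporating the extra metric data $\lambda$ stratum by stratum, using the gluing maps $\overline{\gamma^{T,k}}$ introduced just above the statement as the fundamental source of smooth charts. First I would verify a uniform dimension count: for $T \in \mathcal{T}^{d+1}_U(\vec{L})$ with $V := |V(T)|$, the handshake identity $\sum_{v \in V(T)} |v| = 2(V-1)+(d+1)$ combined with $\dim \mathcal{R}^{|v|} = |v|-3$ gives $\dim \mathcal{R}^T = d-1-V$, while $\dim \lambda(T) = V-1$; hence every stratum $\mathcal{R}^T \times \lambda(T)$ has dimension exactly $d-2$, as required.

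For the smooth structure on $\mathcal{R}^{d+1}_C(\vec{L})$, the key observation is that when some $\lambda(e) \searrow 0$ on an interior (necessarily unilabelled) edge $e$ of $T$, the configuration passes smoothly into the stratum indexed by the tree $T'$ obtained from $T$ by collapsing $e$. The tree $T'$ is again stable (merging two vertices of valency $\geq 3$ yields one of valency $\geq 4$) and still unilabelled on all interior edges, so $T' \in \mathcal{T}^{d+1}_U(\vec{L})$. This transition is exactly the one implemented by $\gamma^{T',0}$ through the Seidel reparametrisation $\rho = -e^{-\lambda(e)} \in [-1,0)$, and compatibility across overlapping charts uses the fixed consistent choice of strip-like ends. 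For the partial compactification $\overline{\mathcal{R}^{d+1}_C(\vec{L})}$, the extended maps $\overline{\gamma^{T,k}}$, restricted to small neighborhoods of the trivial-gluing locus $\{\rho = 0\}$, furnish boundary charts in the sense of \cite[Section 9e]{Seidel2008FukayaTheory}; consistency of the system of ends ensures that the collar coordinate $\lambda$ fits smoothly across strata of different combinatorial types, and iterating over all admissible simultaneous degenerations yields the full manifold-with-corners atlas.

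The main obstacle, in my view, is the final clause: identifying $\overline{\mathcal{R}^{d+1}_C(\vec{L})}$ combinatorially with Stasheff's $(d-2)$-associahedron. I would exhibit the natural continuous surjection $p : \overline{\mathcal{R}^{d+1}_C(\vec{L})} \to \overline{\mathcal{R}^{d+1}(\vec{L})}$ that shrinks every finite interior length $\lambda(e)$ to $0$, sending each stratum $\mathcal{R}^T \times \lambda(T)$ onto the corresponding face of Stasheff's polytope produced by Lemma \ref{disc2}. I would then check that $p$ is a deformation retraction: each collar factor $\lambda(T)$ is a contractible cell glued along an entire boundary face, and the fibres of $p$ are products of intervals. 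Hence $p$ induces an isomorphism of face posets and preserves the homeomorphism type, which together with the smooth atlas above delivers the required realisation as Stasheff's associahedron.
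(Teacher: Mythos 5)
Your overall route --- stratify by unilabelled trees, obtain the smooth structure from gluing charts, and compare $\overline{\mathcal{R}^{d+1}_C(\vec{L})}$ with $\overline{\mathcal{R}^{d+1}(\vec{L})}$ to get the associahedron --- is the same as the paper's, which gives essentially no proof beyond asserting that the maps $\overline{\gamma^{T,k}}$ are boundary charts and that the associahedron structure ``comes for free'' from the collar picture. Your dimension count ($\dim\mathcal{R}^T=d-1-|V(T)|$, $\dim\lambda(T)=|V(T)|-1$) is correct and a genuine addition. Two steps, however, do not work as written.

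First, the seam at $\lambda(e)=0$. With the paper's conventions the reparametrisation $\rho=-e^{-\lambda(e)}$ sends $\lambda(e)=0$ to $\rho=-1$ and $\lambda(e)=\infty$ to $\rho=0$; the maps $\gamma^{T,k}$ (and $\gamma^{T',0}$, which for unilabelled $T'$ is just the stratum inclusion) are charts near the \emph{trivial gluing} $\rho=0$, i.e.\ near the breaking locus $\lambda(e)=\infty$ on the boundary of the compactification. They terminate at $\rho=-1$ and so do not cover a neighbourhood of a configuration with $\lambda(e)=0$. Passing into the stratum of the collapsed tree $T'$ requires smoothing the node between the two disc components at the ends of $e$, so the chart at the seam must concatenate the collar coordinate $\lambda(e)\in[0,\infty)$ with the Deligne--Mumford gluing parameter of the nodal disc configuration (this is where consistency of strip-like ends is used). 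That concatenated chart is the actual content of the smoothness claim for the interior $\mathcal{R}^{d+1}_C(\vec{L})$, and it is missing from your argument.

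Second, the associahedron. Your collapse map $p$ sends the open strata $\mathcal{R}^T\times\lambda^0(T)$ for nontrivial unilabelled $T$ into the boundary face $\mathcal{R}^T$ of $\overline{\mathcal{R}^{d+1}(\vec{L})}$, so it does not respect the two stratifications and cannot induce an isomorphism of face posets; moreover a deformation retraction does not preserve homeomorphism type, so even a correct retraction would not yield the conclusion. The statement that does the job --- and is what the paper's remark after the lemma is gesturing at --- is that $\overline{\mathcal{R}^{d+1}_C(\vec{L})}$ is obtained from the compact manifold with corners $\overline{\mathcal{R}^{d+1}(\vec{L})}$ by attaching external collars $[0,\infty]^{|E^{\textnormal{int}}_U(T)|}$ along the unilabelled directions of closed boundary faces, and attaching a collar along closed faces of a compact manifold with corners yields a diffeomorphic manifold with corners (uniqueness of collars). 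Replacing the retraction argument by this observation, together with the corrected seam charts above, closes the proof.
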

\noindent Notice that the fact that our moduli spaces of clusters $\mathcal{R}^{d+1}_C(\vec{L})$ realize the associahedra comes for free from the fact that to define them we just added some collar neighbourhoods to the compactified moduli spaces of punctured disks with Lagrangian labels $\overline{\mathcal{R}^{d+1}(\vec{L})}$. We remark that for $T\in \mathcal{T}^{d+1}(\vec{L})$ and $k\in \{0,\ldots,d-2\}$, the subspace $\mathcal{R}^T\times \lambda^k(T)\subset \overline{\mathcal{R}^{d+1}_C(\vec{L})}$ is a boundary component of codimension $|E_F(T)|+k$ (where $E_F(T)$ denotes the subset of non unilabelled edges of $T$).\\

We now introduce the following piece of notation which will be useful later on. Given $(r,T,\lambda)\in \mathcal{R}^{d+1}_C(\vec{L})$ we define $$\partial S_{r,T,\lambda} = \partial_F S_{r,T,\lambda} \cup \partial_M S_{r,T,\lambda}$$ where $\partial_F S_{r,T,\lambda}$ is the union of the boundaries of the disk components of $S_{r,T,\lambda}$, while $\partial_MS_{r,T,\lambda}$ is the union of all the line components in $S_{r,T,\lambda}$. Moreover, we denote by $S_{r,T,\lambda}(v)$ the disk with punctures and marked points corresponding to the vertex $v\in V(T)$, define $$S_{r,T,\lambda}^p:=\bigsqcup_{v\in V(T)} S_{r,T,\lambda}(v)$$ and for any $i\in\{0,\ldots, d^R\}$ we denote by $\partial_iS_{r,T,\lambda}(v)$ to be the $i$th boundary component of $S_{r,T,\lambda}(v)$, that is the one corresponding to the boundary component $e_i(v)$ (see Section \ref{sectiontrees}), and hence labelled by $\overline{L^v_i}$ (here we are again differentiating between $\overline{L_i}$ and $\overline{L_j}$ for $i\neq j$ even if they agree as Lagrangians).\\
A universal choice of strip-like ends for $\pi^{d+1}_C(\vec{L})$ is the pullback from $\overline{\pi^{d+1}(\vec{L})}$ of the compactification of a universal and consistent choice of strip-like ends on $\pi^{d+1}(\vec{L})$. Hence, consistency of strip-like ends for clusters comes for free by definition. Given a choice of strip-like ends on $S_{r,T,\lambda}$ we number the strip-like ends following the numbering of the exterior edges of $T_\textnormal{red}$ and denote them by $\epsilon_0,\ldots,\epsilon_{d^R}$ or $\epsilon_1,\ldots.,\epsilon_{d^R}$ depending on weather or not $T_\textnormal{red}$ contains the root or not (i.e. weather $L_0\neq L_d$ or $L_0=L_d)$.\\
$\pi^{d+1}_C(\vec{L})$ admits a partial compactification $$\overline{\pi_C^{d+1}(\vec{L})}: \overline{\mathcal{S}_C^{d+1}(\vec{L})}\to \overline{\mathcal{R}_C^{d+1}(\vec{L})}$$ defined by allowing line segments between disks to have infinite length. Those edges will be identified with broken edges following the conventions defined in Section \ref{sistemofends}.\\

%=========================================================
%=======================================================
%======================================================

\subsection{Perturbation data for clusters}\label{pdgen}
We define the concept of perturbation data in our cluster-setup. Fix coherent choices of strip-like ends and of system of ends $(s_T)_T$ (see page \pageref{sistemofends}). 
Let $(L_0,L_1)$ be a couple of Lagrangians in $\mathcal{L}^{m,\textbf{d}}(M,\omega)$. 

\begin{defn}
    A Floer datum for $(L_0,L_1)$ consists of:\begin{itemize}
    \item if $L_0=L_1=L$: a triple $(f^L,g^L,J^L)$ where $(f^L,g^L)$ is a Morse-Smale pair on $L$ such that $f^L$ has a unique maximum and $J^L$ is an $\omega$-compatible almost complex structure on $M$;
    \item if $L_0\neq L_1$: a couple $(H^{L_0,L_1}, J^{L_0,L_1})$ where $H:M\times [0,1]\to \R$ is a time dependent Hamiltonian on $M$ such that $$\varphi^{L_0,L_1}_1(L_0)\pitchfork L_1$$ where $\varphi^{L_0,L_1}_t$ is the flow of the Hamiltonian vector field generated by $H^{L_0,L_1}$, and $J$ is a $[0,1]$-family of $\omega$-compatible almost complex structures on $M$ which equals $J^{L_0}$ near $0$ and $J^{L_1}$ near $1$.
\end{itemize}
\end{defn}

Given $L\in \lagmd(M,\omega)$ we denote by $\Crit(f^L)$ the (finite) set of critical points of a Morse function $f^L:L\to \R$ on $L$, and given $x\in \Crit(f^L)$ we denote by $|x|$ its Morse index, by $W^u(x)$ its unstable manifold and by $W^s(x)$ its stable manifold. Given a couple $(L_0,L_1)$ of different Lagrangians in $\lagmd(M,\omega)$ with a choice $H^{L_0,L_1}$ of Hamiltonian Floer datum we denote by $\mathcal{O}(H^{L_0,L_1})$ the (finite) set of orbits $\gamma:[0,1]\to M$ of the Hamiltonian vector field of $H^{L_0,L_1}$ such that $$\gamma(0)\in L_0 \textnormal{   and   }\gamma(1)\in L_1.$$
Fix a choice of Floer data for any couple of Lagrangians in $\mathcal{L}^{m,\textbf{d}}(M,\omega)$. In order to state the following definition in an easier way, we arbitrarily set $H^{L,L}:=0$ and $J^{L,L}:=J^L$ for any $L\in \lagmd(M, \omega)$ and keep this convention for the whole paper.
 Let $\vec{L}=(L_0,\ldots,L_d)$ be a tuple of Lagrangians in $\mathcal{L}^{m,\textbf{d}}(M,\omega)$ and let $(r,T,\lambda)\in \mathcal{R}^{d+1}_C(\vec{L})$ be a cluster configuration. \begin{defn}\label{maindefofpd}
     A perturbation datum for $\vec{L}$ on the cluster $S_{r,T,\lambda}$ consists of the following data:
\begin{itemize}\label{pdgen2}
    \item For any $i=0,\ldots,d^F$ (see page \pageref{defofmis} for the definition of $d^F$) a choice of tuple $$\left(f^{r,T,\lambda}_{(T_i^F,\lambda_i)}, g^{r,T,\lambda}_{(T_i^F,\lambda_i)}\right)$$ where the $(T_i^F,\lambda_i)$'s are the metric subtrees of $T$ coming from the fundamental decomposition of $T$ (see Definition \ref{fundamentaldecomposition}) and: \begin{itemize}
        \item $f^{r,T,\lambda}_{(T_i^F,\lambda_i)}:T_i^F\times \overline{L_i}\to \R$ is a map smooth on edges and continuous on vertices such that $$f^{r,T,\lambda}_{(T_i^F,\lambda_i)}(\tau):\overline{L_i}\to \R$$ is Morse for any\footnote{Here we identify system of ends as subintervals of edges via the convention introduced at the end of Section \ref{sectiontrees}.} $\tau\in T_i^F$ and $$f^{r,T,\lambda}_{(T_i^F,\lambda_i)}(\tau)=f^{\overline{L_i}}$$ for any $\tau\in s_{T_i^F}$;
        \item $g^{r,T,\lambda}_{(T_i^F,\lambda_i)}$ assigns to any $\tau\in T_i^F$ a Riemannian metric $g^{r,T,\lambda}_{(T_i^F,\lambda_i)}(\tau)$ on $L$ such that the pair $$(f^{r,T,\lambda}_{(T_i^F,\lambda_i)}(\tau),g^{r,T,\lambda}_{(T_i^F,\lambda_i)})$$ is Morse-Smale for any $\tau\in T_i^F$ and $$g^{r,T,\lambda}_{(T_i^F,\lambda_i)}(\tau)=g^{\overline{L_i}} $$ for anu $\tau\in s_{T_i^F}$.
    \end{itemize}
    \item For any $v\in V(T)$ a choice of couples $$(K^{r,T,\lambda}_v,J^{r,T,\lambda}_v)$$ such that:
    \begin{itemize}
        \item $K^{r,T,\lambda}_v\in \Omega^1(S_{r,T,\lambda}(v), C^\infty(M))$ is an Hamiltonian-valued one-form which vanishes identically if $v\notin T_\textnormal{red}$ (i.e. if $S_{r,T,\lambda}(v)$ does not have any punctures), while for $v\in T_\textnormal{red}$ is such that for any $i=0,\ldots,d^R$ it satisfies $$K^{r,T,\lambda}_v(\xi)|_{\overline{L_i}}=0 \textnormal{ for any } \xi\in T(\partial_iS_{r,T,\lambda}(v))$$ and for any $i=0,\ldots, d$ on the strip-like $\epsilon_i$ we have\footnote{See page \pageref{striplikends} for the definition of $|s|\geq 1$.} $$K^{r,T,\lambda}_v=H^{L_i,L_{i+1}} dt \textnormal{ for any }|s|\geq 1.$$
        \item $J^{r,T,\lambda}_v$ is a domain-dependent $\omega$ compatible almost complex structure which, if $v\in T_i^F\subset T\setminus T_\textnormal{red}$ for some $i\in \{0,\ldots, d^F\}$ (see Section \ref{sectionlagtrees}) it is identical to $J^{L_i^F}$, while if $v\in T_\textnormal{red}$, it is such that on the $i$th strip-like end $\epsilon_i$ we have $$J^{r,T,\lambda}_v=J^{L_i,L_{i+1}} \textnormal{ for any }|s|\geq 1$$ for any $i=0,\ldots, d$.
    \end{itemize}
\end{itemize}
We define a perturbation datum for $\vec{L}$ as a smooth choice of perturbation data for $\vec{L}$ on $\mathcal{R}^{d+1}_C(\vec{L})$ and denote it by $$\left((f^{\vec{L}},g^{\vec{L}}),(K^{\vec{L}},J^{\vec{L}})\right).$$ The couple $(f^{\vec{L}},g^{\vec{L}})$ will be called the Morse part of the perturbation datum, while $(K^{\vec{L}},J^{\vec{L}})$ will be called its Floer part. A universal choice of perturbation data is a choice of perturbation datum for any tuple $\vec{L}$.
 \end{defn}

We have the following consistency conditions for perturbation data (see \cite{Mescher2018PerturbedCohomology, Sheridan2011OnPants}). %, say $\left(K^{\vec{L}}, J^{\vec{L}}, (f^\vec{L}_{T,\lambda}, g^{\vec{L}}_{T,\lambda})_{T,\lambda}\right)_{\vec{L}}$. 
\begin{defn}
    We say that a choice of perturbation data is consistent if for any tuple $\vec{L}$ of Lagrangians in $\lagmd(M,\omega)$, say of length $d+1$, we have:\begin{enumerate}
    \item for any $T\in \mathcal{T}(\vec{L})$ and $k\in \{0,\ldots, d-2\}$, there is a subset $$U\subset \mathcal{R}^T\times \lambda^k_U(T)\times [-1,0)^{|E_F^\textnormal{int}(T)|+k}$$ whose closure is a neighbourhood of the trivial gluing where the gluing parameters are small such that the perturbation data for clusters over $U$ agree with perturbation data induced by gluing on thin parts;
    \item all perturbation data extend smoothly to $\partial\overline{\mathcal{R}^{d+1}_C(\vec{L})}$ and agree there with perturbation data coming from (trivial) gluing.
\end{enumerate}
\noindent We define the space $E$ of consistent universal choices of perturbation data for clusters (with respect to a fixed choice of strip-like ends and of system of ends).

\begin{rem}
   It is not hard to see that $E$ is non-empty. Indeed, there are plenty of consistent universal choices of perturbation data for puntured disks (\'a la \cite{Seidel2008FukayaTheory}) and of consistent universal choices of perturbation data for Morse trees (see \cite{Mescher2018PerturbedCohomology}). As those two types of perturbation data are independent enough (indeed, equality for glued perturbation data has to hold only on small neighbourhoods of the trivial gluing) in our definition of perturbation data for clusters, it follows that the space $E$ is non-empty.
\end{rem}
\end{defn} Given $p\in E$ and a tuple $\vec{L}$ of Lagrangians in $\lagmd(M,\omega)$ we will often write the perturbation data on $\vec{L}$ induced by $p$ as $$\mathcal{D}^{\vec{L}}_p= \left(\left(f^{\vec{L}},g^{\vec{L}}\right),\left(K^{\vec{L}},J^{\vec{L}}\right)\right).$$ Moreover, given $(r,T,\lambda)\in \mathcal{R}^{d+1}_C(\vec{L})$ we will write the associated perturbation datum for $\vec{L}$ on $S_{r,T,\lambda}$ as $\mathcal{D}^{\vec{L}}_p(r,T,\lambda)$.

%------------------------------------------------------------------------
%======================================================================
%--------------------------------------------------------------------

\subsection{Moduli spaces of Floer clusters with Lagrangian boundary} Let $p\in E$ be a consistent universal choice of perturbation data for clusters as defined in the previous section. In the following, given a disk $u: D\to M$ in $M$ we write $[u]\in \pi_2(M,L)$ for the image of the fundamental class of $D$ under the pushforward of $u$.

First, we define moduli spaces of pearly-edges (see \cite{Biran2009LagrangianHomology}). Let $L$ be a monotone Lagrangian in $\mathcal{L}^{m,\textbf{d}}(M,\omega)$ and $d\geq 1$. Pick a tree $T\in \mathcal{T}^{d+1}(L)$ labelled by the single Lagrangian $L$, an interior edge $e\in E^\textnormal{int}(T)$ of $T$, points $a,b\in L\setminus\textnormal{Crit}(f^L)$ and a class $A\in \pi_2(M,L)$.
\begin{defn}
We define the moduli space $$\mathcal{P}(a,b;p;e;A)$$ of so-called pearly trajectories in the class $A$ modeled on the edge $e$ and joining $a$ to $b$ as the space of tuples $$\left(\lambda(e),(u_1,\ldots,u_k)\right)$$ where    \begin{enumerate}
    \item $\lambda\in \lambda(T)$ is a metric on $T$,
    \item for any $i=1,\ldots,k$, $u_i:D\to M$ is a non-constant $J$-holomorphic disk such that $u_i(\partial D)\subset L$,
    \item we have $\sum_{i=1}^k [u_i]=A$,
    \item there are $t_0,\ldots,t_k\in [0,\infty)$ such that $\sum_{i=0}^k t_i=\lambda(e)$ and we have the relations $$\varphi_{f^L_{(T.\lambda)|_e},g^L_{(T.\lambda)|_e}}^{t_0}(a)=u_1(-1) \textnormal{   and   } \varphi_{f^L_{(T,\lambda)}|_e,g^L_{(T,\lambda)}|_e}^{-t_k}(b)=u_k(1)$$ as well as $$\varphi_{f^L_{(T,\lambda)}|_e,g^L_{(T,\lambda)}|_e}^{t_i}(u_i(1))=u_{i+1}(-1)$$ for any $i=1,\ldots,k-1$, where $\varphi_{f^L_{(T,\lambda)}|_e,g^L_{(T,\lambda)}|_e}^t$ is the time $t$ flow map of the negative gradient of the time dependent map $f^L_{(T,\lambda)}|_e$ with respect to the time dependent Riemannian metric $g^L_{(T,\lambda)}|_e$, where $f^L_{(T,\lambda)}$ and $g^L_{(T,\lambda)}$ are Morse data for $L$ associated to $p\in E$, as defined in Section \ref{pdgen}.
\end{enumerate}
up to reparametrization, i.e. $(\lambda(e),u_1,\ldots,u_k)$ is identified with $(\lambda'(e),u_1',\ldots,u_l')$ if and only if $k=l$, $\lambda(e)=\lambda'(e)$ and there are automorphisms $\sigma_1,\ldots,\sigma_k\in \textnormal{Aut}(D)$ of the unit disk $D\subset \mathbb C$ fixing $-1$ and $1$ such that $u_i'=\sigma_i\circ u_i$ for any $i=1,\ldots,k$.\\  Moreover, we define $\mathcal{P}(a,a;p;e;A)=\emptyset$ for any choice of parameters. The definition of $\mathcal{P}(a,b;p;e;A)$ extends also to the case where $a,b$ are critical points of the function $f_L$ in a standard way: if for instance $a\in \Crit(f_L)$, then we ask that $t_0=\infty$ (i.e. $u_1(-1)\in W^u(a)$) and $\sum_{i=1}^k t_i\in [0,\infty)$.\end{defn} \noindent The virtual dimension of $\mathcal{P}(a,b;p;e;A)$ is $n+\mu(A)-1$, where $\mu$ denotes the Maslov index.  If both $a$ and $b$ are critical points, then the virtual dimension of $\mathcal{P}(a,b;p;e;A)$ is $|a|-|b|+\mu(A)-1$, where $|\cdot |$ denotes the Morse index.\\

Consider now a tuple $\vec{L}=(L_0,\ldots,L_d)$ of Lagrangians in $\mathcal{L}^{m,\textbf{d}}(M,\omega)$. We define moduli spaces of Floer clusters with boundary on $\vec{L}$.\\
Assume first $L_0\neq L_d$. Pick $\vec{x}_i:=(x^i_1,\ldots,x^i_{m_i-1})$ for any $i=0,\ldots,d^R$, where $x^i_j\in \Crit(f^{\overline{L^i}})$ are critical points, orbits $\gamma_j\in \mathcal{O}(H^{\overline{L_{j-1}},\overline{L_j}})$ for any $i=1,\ldots,d^R$ and $\gamma_+\in \mathcal{O}(H^{\overline{L_0},\overline{L_{d^R}}})$ and a class $A\in \pi_2(M,\vec{L})$. Recall that we defined $\pi_2(M,\vec{L}): =\pi_2(M,\cup_i L_i)$ in Section \ref{tuplesoflags}.
%$A:=(A_1,\ldots,A_{d^R})$, where $A_i\in \pi_2(M,\overline{L_i})$ are disk homology classes. 
We define the moduli space $$\mathcal{M}^{d+1}\left(\vec{x}_0, \gamma_1, \vec{x}_1,\ldots,\gamma_{d^R}, \vec{x}_{d^R}; \gamma^+;A;p\right)$$ of Floer clusters joining $\vec{x}_0, \gamma_1, \vec{x}_1,\ldots,\gamma_{d^R}, \vec{x}_{d^R}$ to $\gamma^+$ in the class $A$ as the space of tuples $((r,T,\lambda),u)$ where $$u=\left((u_v)_{v\in V(T)}, (u_e)_{e\in E(T)}\right): S_{r,T,\lambda}\to M$$ satisfies \begin{enumerate}
    \item for any vertex $v\in V(T)$, $u_v:S_{r,T,\lambda}(v)\to M$ satisfies the $(K^v,J^v)$-Floer equation and the boundary conditions $u(\partial_iS_{r,T,\lambda})\subset \overline{L_i}$,
    \item for any $i=1,\ldots,d^R$ we have $$\lim_{s\to \-\infty}u_{h(e_i(T_\textnormal{red}))}(\overline{\epsilon_i}(s,t))=\gamma_i(t)$$ and $$\lim_{s\to \infty}u_{h(e_0(T_\textnormal{red}))}(\epsilon_0(s,t))=\gamma^+(t),$$
    \item for any $i=0,\ldots,d^R$ and any interior edge $e\in E_i^\textnormal{int}(T)$ (uni)labelled by $\overline{L_i}$ there is a class $B_e\in \pi_2(M,L)$ such that $$u_e\in \mathcal{P}(u_{t(e)}(z_{t(e)}), u_{h(e)}(z_{h(e)});p;e;B_e),$$
    \item for any $i=0,\ldots,d^R$ and any $j=1,\ldots,m_i$ (see page \pageref{defofmis}) there is a class $B_j^i\in \pi_2(M,\overline{L_i})$ such that $$u_{e^i_j(T_\textnormal{uni})}\in \mathcal{P}(x^i_{j}, u_{h(e_j(T_i^F))}(z_{h(e_j(T_i^F))});p;e^i_j(T_\textnormal{uni});B_j^i),$$
    \item We have the relation $$A=\sum_{v\in V(T)}[u_v]+\sum_{e\in E^\textnormal{int}(T)}B_e+\sum_{i=0}^{d^R}\sum_{j=1}^{m_i}B_j^i$$ on\footnote{Recall that we ignore inclusions on $\pi_2$, see Section \ref{tuplesoflags}.} $\pi_2(M,\vec{L})$.
    %\item for any $i=0,\ldots,d^R$ we have the relation $$A_i = \sum_{v\in V(T_i^F)}[u_v]+ \sum_{e\in E_i^\textnormal{int}(T)} B_e + \sum_{\substack{k\in \{0,\ldots, d^R\} \\ k^F=i}}\sum_{j=1}^{m_{k}}B_j^{k} $$
\end{enumerate}

Assume now $L_0=L_d$. Pick $x^+\in \Crit(f^{\overline{L_0}})$, $\vec{x}_i:=(x_1^i,\ldots,x_{\overline{m_i}}^i)$ for $i=0,\ldots,d^R+1$, where $x^i_j\in \Crit(f^{\overline{L_i}})$ are critical points\footnote{Recall that we work modulo $d^R$, so that $\overline{L_{d^R+1}}=\overline{L_0}$.}, orbits $\gamma_j\in \mathcal{O}(H^{\overline{L_{j-1}},\overline{L_j}})$ for any $i=1,\ldots,d^R+1$ and a class $A\in \pi_2(M,\vec{L})$. We define the moduli space $$\mathcal{M}^{d+1}\left(\vec{x}_0, \gamma_1, \vec{x}_2,\ldots,\gamma_{d^R+1}, \vec{x}_{d^R+1}; x^+;A;p\right)$$ of Floer clusters joining $\vec{x}_0, \gamma_1, \vec{x}_1,\ldots,\gamma_{d^R+1}, \vec{x}_{d^R+1}$ to $x^+$ in the class $A$ as the space of tuples $((r,T,\lambda),u)$ where $$u=((u_v)_{v\in V(T)}, (u_e)_{e\in E(T)}): S_{r,T,\lambda}\to M$$ satisfies\begin{enumerate}
    \item for any vertex $v\in V(T)$, $u_v:S_{r,T,\lambda}(v)\to M$ satisfies the $(K^v,J^v)$-Floer equation and the boundary conditions $u(\partial_iS_{r,T,\lambda})\subset \overline{L_i}$,
    \item for any $i=1,\ldots,d^R$ we have $$\lim_{s\to \-\infty}u_{h(e_i(T_\textnormal{red}))}(\overline{\epsilon_i}(s,t))=\gamma_i(t),$$
    \item for any $i=0,\ldots,d^R$ and any interior edge $e\in E_i^\textnormal{int}(T)$ (uni)labelled by $\overline{L_i}$ there is a class $B_e\in \pi_2(M,L)$ such that $$u_e\in \mathcal{P}(u_{t(e)}(z_{t(e)}), u_{h(e)}(z_{h(e)});p;e;B_e),$$
    \item for any $i=0,\ldots,d^R$ and any $j=1,\ldots,m_i$ there is a class $B_j^i\in \pi_2(M,\overline{L_i})$ such that $$u_{e^i_j(T_\textnormal{uni})}\in \mathcal{P}(x^i_{j}, u_{h(e^i_j(T_\textnormal{uni}))}(z_{h(e^i_j(T_\textnormal{uni}))});p;e^i_j(T_\textnormal{uni});B_j^i)$$ and there is a class $B_0^0\in \pi_2(M,\overline{L_0})$ such that $$u_{e_0(T)}\in \mathcal{P}(u_{t(e_0(T))}, x^+;p;e_0(T);B_0^0),$$
    \item We have the relation $$A=\sum_{v\in V(T)}[u_v]+\sum_{e\in E^\textnormal{int}(T)}B_e+\sum_{i=0}^{d^R}\sum_{j=1}^{m_i}B_j^i+B_0^0$$ on $\pi_2(M,\vec{L})$.
    %\item for any $i=0,\ldots,d^F$ we have the relation $$A_i = \sum_{v\in V(T_i^F)}[u_v]+ \sum_{e\in E^\textnormal{int}_i(T)} B_e + \sum_{\substack{k\in \{0,\ldots, d^R\} \\ k^F=i}}\sum_{j=1}^{m_{k}}B_j^{k}+B_0^0$$
\end{enumerate}
\noindent A schematic representation of a Floer cluster in the case $L_0=L_d$ is depicted in Figure \ref{figcluster}.

Given a Floer cluster $(r,T,\lambda,u)$ we write $u_F:=(u_v)_{v\in V(T_\textnormal{red})}$ for the collection of curves contributing to $u$ which are not purely pseudoholomorphic, and $$\omega(u):=\sum_{v\in V(T)}\omega(u_v)+\sum_{e\in E(T)}\omega(u_e)= \omega(A)+\omega(u_F)$$ fot its total symplectic area.\label{uf}

%---------------------------------------------------------------------
%---------------------------------------------------------------------
%---------------------------------------------------------------------

\subsection{Transversality for Floer clusters}\label{transversality for floer clusters} %We first introduce some notation. Given a couple $(L_0,L_1)$ of Lagrangians in $\lagmd(M,\omega)$ equipped with Floer data we will use the following notation: if $L_0\neq L_1$ and $\gamma\in \mathcal{O}(H^{L_0,L_1})$ is an Hamiltonian orbit connecting $L_0$ to $L_1$, we denote by $|\gamma|$ the index of $\gamma$ induced by the choice of gradings on $L_0$ and $L_1$, while if $L_0=L_1$ and $x\in \Crit(f_{L_0})$ is a critical point, we denote by $|x|$ its Morse index. We will not recall the precise definition of the above indices as they are standard in the literature (see for instance \cite[Section 12a]{Seidel2008\mathcal{F}ukayaTheory} and \cite[Chapter 1]{Audin2014MorseHomology}).\\

Let $d\geq 1$ and $\vec{L}$ be a tuple of pairwise different Lagrangians in $\lagmd(M,\omega)$, i.e. with $\vec{L^F}=\vec{L}$ in the notation introduced in Section \ref{tuplesoflags}, of length $d+1$. Consider Hamiltonian orbits $\gamma_i\in \mathcal{O}(H^{L_{i-1},L_i})$ for $i\in \{1,\ldots, d\}$ and $\gamma^+\in \mathcal{O}(H^{L_0,L_d})$ as well as a class $A\in \pi_2(M,\vec{L})$ (recall that we defined $\pi_2(M,\vec{L}):=\pi_2(M,\cup_iL_i)$, see page \pageref{piconvention}). Then, associated to any Floer polygon $u$ connecting $\gamma_1,\ldots, \gamma_d$ to $\gamma^+$ in the class $A$ there is a polygonal Maslov index $\mu(u)\in \Z$ (see \cite{Fukaya2009LagrangianI} or \cite[Chapter 13]{Oh2015SymplecticHomology}). It is known that this index only depends on the elements $\gamma_1,\ldots, \gamma_d,\gamma^+$ and on the class $A$, and hence will be denoted by $\mu(\gamma_1,\ldots, \gamma_d,\gamma^+;A)$. Moreover, it is additive under breaking and bubbling of Floer polygons and bubbling of pseudoholomorphic disks (and in this case reduces to the standard Maslov index). In particular, it follows directly that the Maslov index is defined for clusters too.

    \begin{figure}
    \centering
 \scalebox{2}{
    \begin{tikzpicture}
    \begin{scope}
    [decoration={markings, 
    mark= at position 0.5 with {\arrow{stealth}}}, roundnode/.style={circle, draw, minimum size=7mm, orange}]

    %pairofpants
    \pic[tqft/pair of pants, between incoming and outgoing/.style={draw, red}, between outgoing 1 and 2/.style={draw, red},name=a];
    %cilindro
    \pic[tqft/cylinder, between incoming and outgoing/.style={draw, red}, at={(-4,+0.5)},name=b];

    \node[roundnode] (uppercircle) at (-2,0.5) {};
    
    \node[style={circle, draw, minimum size=10mm,orange}] (lower2) at (-2,-4) {};
    \node[style={circle, draw, minimum size=3mm,orange}] (lower1) at (1,-3.7) {};
    \node (lower11) at (1,-4.8) {};
    \draw (lower11.north) node {\large .};
    \node at (1,-4.95) {\tiny $x^5_1$};
    \node (lowercent) at (0,-3) {};
    \node (lower21) at (-3,-4) {};
    \draw (lower21.east) node {\large .};
    \node at (-3.15,-4) {\tiny $x^+$};
        \draw (lower2.north) node {\large .};
    \node[above] at (lower2.north) {\tiny $x_1^0$};

    \draw[postaction={decorate},orange] ($(a-between  outgoing 1 and 2)$) to[bend right=40] (lowercent.north)  ;
    \draw[postaction={decorate},orange] (lowercent.north) to[bend left=100] (lower2.south);
    \draw[postaction={decorate},orange] (lower1.north) to[bend right=10] (lowercent.north);
    \draw[postaction={decorate},orange]   (lower2.west) to(lower21);
    \draw[postaction={decorate},orange] (lower11) to (lower1.south);
    
    \draw[postaction={decorate},orange] (uppercircle.south) to[bend right=25] ($(a-between first incoming and first outgoing)$) ;
    \draw[postaction={decorate},orange] ($(b-between last incoming and last outgoing)$) to[bend left=20] (uppercircle.west) ;

    \draw node at (-1.3,-0.5){\color{brown} \tiny $L_2$};
    \draw node at (1.3,-0.5){\color{brown} \tiny $L_1$};
    \draw node at (0,-2){\color{brown} \tiny \ \ \ $L_0$};
    \draw node at (0,0.2){ $\downarrow$};
    \draw node at (0,0.53){ \tiny $\gamma_2$};
    \draw node at (-1,-2.2){ $\uparrow$};
    \draw node at (-1,-2.53){ \tiny $\gamma_5$};
    \draw node at (-4.7,-0.7){\color{brown} \tiny $L_3$};
    \draw node at (1,-2.2){ $\uparrow$};
    \draw node at (-4,-1.75){ $\uparrow$};
    \draw node at (-4,-2.08){\tiny $\gamma_4$};
    \draw node at (-4,0.7){ $\downarrow$};
    \draw node at (-4,1.03){\tiny $\gamma_3$};
    \draw node at (1,-2.53){ \tiny $\gamma_1$};
    \end{scope}
    \end{tikzpicture}}

    \caption{A schematic representation of a Floer cluster in the moduli space $\mathcal{M}^8(x_1^0,\gamma_1,\gamma_2,\gamma_3,\gamma_4,\gamma_5, x_1^{4+1}; x^+;A)$ for some class $A\in \pi_2(M,\vec{L})$ for the tuple $\vec{L} = (L_0,L_0,L_1,L_2,L_3,L_2,L_0,L_0)$. Morse trajectories and smooth disks are colored in orange, critical points in black and Floer polygons in red. Note that in this case $\vec{L_\textnormal{red}}=((L_0,2+2),L_1,L_2,L_3,L_2)$ and $\vec{L^F}= (L_0,L_1,L_2,L_3)$.}
    \label{figcluster}
\end{figure}
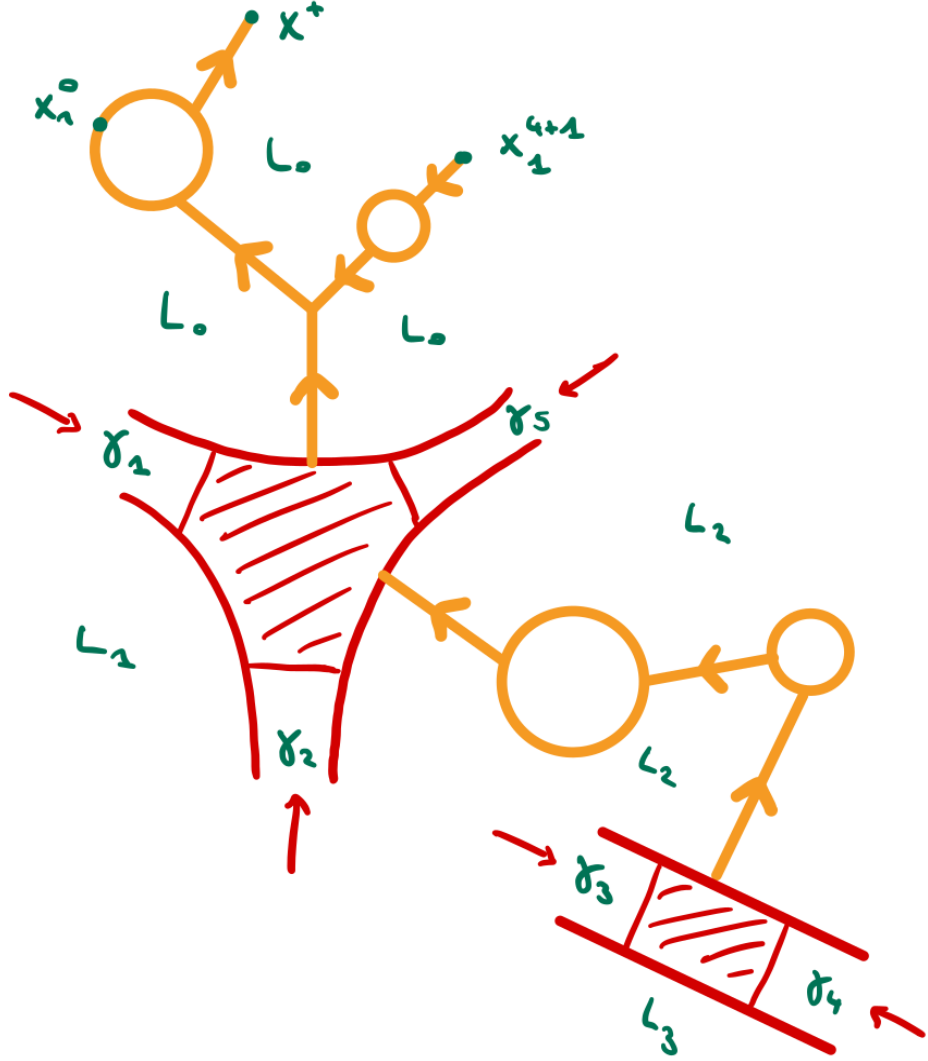
\noindent In this section we sketch the proof of the following result.

\begin{prop}\label{tc1}
    There exists a residual subset $E_\textnormal{reg}\subset E$ such that for any $p\in E_\textnormal{reg}$ and any tuple $\vec{L}=(L_0,\ldots,L_d)$ of Lagrangians in $\lagmd(M,\omega)$ of any length $d\geq 1$ the following holds:
    \begin{enumerate}
        \item if $L_0\neq L_d$ then for any $\vec{x}_i:=(x^i_1,\ldots,x^i_{m_i-1})$, $i=0,\ldots,d^R$, where $x^i_j\in \Crit(f^{\overline{L_i}})$ are critical points, any orbits $\gamma_j\in \mathcal{O}\left(H^{\overline{L_{j-1}},\overline{L_j}}\right)$, $i=1,\ldots,d^R$ and $\gamma^+\in \mathcal{O}\left(H^{\overline{L_0},\overline{L_{d^R}}}\right)$ and any class $A\in \pi_2(M,\vec{L})$ satisfying \begin{equation}\label{te1}
            d_{A}^{(\vec{x}_i)_i, (\gamma_j)_j,\gamma^+}:=\mu(\gamma_1,\ldots, \gamma_d,\gamma^+,A)+\sum_{i=0}^{d^R}\sum_{j=1}^{m_i-1}|x^i_j|-n(d-d^R-1)+d-2\leq 1
        \end{equation} then the moduli space $$\mathcal{M}^{d+1}\left(\vec{x}_0, \gamma_1, \vec{x}_1,\ldots,\gamma_{d^R}, \vec{x}_{d^R}; \gamma^+;A;p\right)$$ is a smooth manifold of dimension $d_{A}^{(\vec{x}_i)_i, (\gamma_j)_j,\gamma^+}$.
        %$$\mu(A_i)+\sum_{j=1}^{m_i-1}|x_j^i|-n(m_i-2)+m_i-3\leq 1$$  for any $i=0,\ldots,d^R$
        \item if $L_0=L_d$ then for any $x^+\in \Crit(f^{\overline{L_0}})$, $\vec{x}_i:=(x_1^i,\ldots,x_{\overline{m_i}}^i)$ for $i=0,\ldots,d^R+1$, where $x^i_j\in \Crit(f^{\overline{L_i}})$, any orbits $\gamma_j\in \mathcal{O}\left(H^{\overline{L_{j-1}},\overline{L_j}}\right)$, $i=1,\ldots,d^R$ and any class $A\in \pi_2(M,\vec{L})$ satisfying 
        \begin{equation}\label{te2}
           d_{A}^{(\vec{x}_i)_i, (\gamma_j)_j,x^+}:=\mu(\gamma_1,\ldots, \gamma_d,\gamma^+,A)+\sum_{i=0}^{d^R}\sum_{j=1}^{m_i-1}|x^i_j|-|x^+|-n(d-d^R-1)+d-2\leq 1
        \end{equation}
     then the moduli space $$\mathcal{M}^{d+1}\left(\vec{x}_0, \gamma_1, \vec{x}_1,\ldots,\gamma_{d^R+1}, \vec{x}_{d^R+1}; x^+;A;p\right)$$ is a smooth manifold of dimension $d_{A}^{(\vec{x}_i)_i, (\gamma_j)_j,x^+}$.
    \end{enumerate}
\end{prop}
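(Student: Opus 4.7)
The plan is to apply a parametric Sard--Smale argument, constructing $E_{\text{reg}}$ as a countable intersection of residual subsets indexed by the discrete data $(\vec{L}, \vec{x}_\bullet, \vec\gamma_\bullet, \gamma^+\text{ or }x^+, A)$ subject to the dimension bound, and built inductively on a complexity parameter such as $d + \omega(A)$ (so that when one perturbs the datum for a given $\vec L$, the Floer data for shorter tuples are already fixed and the consistency conditions at the boundary are respected). Working stratum by stratum over the pieces $\mathcal R^T\times \lambda^k_U(T)$ of $\overline{\mathcal R^{d+1}_C(\vec L)}$, I would set up the usual Banach manifold of maps $u$ of some Sobolev class $W^{1,p}$ with the prescribed asymptotic and Lagrangian boundary conditions and form the universal moduli space by letting the perturbation datum vary in a separable Banach completion of the space of variations of $p$ supported in the interior of the given stratum and vanishing on the strip-like-end and system-of-ends regions where $p$ is fixed.

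The core technical step is showing surjectivity of the universal linearized operator. This operator has three coupled components: (i) the linearized Floer operator on each vertex $v\in V(T_{\text{red}})$ with its domain-dependent $(K_v,J_v)$; (ii) the linearized Morse--Smale operator along each edge of $T_i^F$ for the $t$-dependent pair $(f_{(T_i^F,\lambda_i)}, g_{(T_i^F,\lambda_i)})$; (iii) matching conditions at the nodes $z_{h(e)},z_{t(e)}$ and evaluation constraints connecting pearl segments to discs. For (i), surjectivity follows from the standard argument of perturbing $J_v$ on the somewhere-injective part of a Floer solution (away from the ends and the nodal evaluations, where one has freedom in a full open set of the surface); for (ii), varying the Riemannian metric on the time-dependent Morse pair in a neighborhood of a regular point of the flow gives surjectivity of the Morse linearization, exactly as in \cite{Mescher2018PerturbedCohomology}. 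The evaluation/matching conditions at nodes are handled by a standard cutting argument: by allowing perturbations to vary both on the disc side and the Morse side of a node one arranges transversality of the diagonal. The Sard--Smale theorem then yields the residual set of $p$ for which the restricted linearization is surjective, which by the implicit function theorem gives the manifold structure with dimension equal to the index.

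The index computation itself is the Fredholm sum: each Floer vertex contributes $\mu(\gamma_{\text{in}}, \gamma_{\text{out}}; A_v)$ minus the appropriate dimensional defect, each pearly edge labeled by $L_i$ contributes $|x|_{\text{in}} - |x|_{\text{out}} + \mu(B) - 1$ (adjusted at critical-point endpoints), and each interior node imposes an $n$-codimensional matching. Summing over $V(T_{\text{red}})$ and over all $T_i^F$, using additivity of the polygonal Maslov index under breaking and bubbling, yields
\begin{equation*}
\mu(\gamma_1,\ldots,\gamma_{d^R},\gamma^+;A) + \sum_{i,j}|x^i_j| - n\bigl(d - d^R - 1\bigr) + d - 2,
\end{equation*}
with the extra $-|x^+|$ in the case $L_0 = L_d$, matching (\ref{te1}) and (\ref{te2}). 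The inductive nature of the construction guarantees that the consistency conditions from Section~\ref{pdgen} are preserved: perturbations are chosen to vanish in the gluing collars where the data is pinned by lower-complexity data.

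The main obstacle will be ruling out pathological configurations that can spoil compactness or the manifold structure in dimension $\leq 1$, namely sphere and disc bubbling together with multiple-covered components of the pseudoholomorphic discs that appear on the pearly edges and on vertices $v\notin V(T_{\text{red}})$. Here monotonicity is used in the standard way: simple discs of Maslov $2$ form a manifold of the expected dimension, multiply-covered discs have image of strictly smaller dimension, and disc/sphere bubbles in the monotone regime with $N_L\geq 2$ come with codimension $\geq 1$ so that under the hypothesis $d_A^{\bullet}\leq 1$ they only appear in the top-dimensional stratum as codimension-$1$ phenomena that can be identified with genuine boundary strata of $\overline{\mathcal R^{d+1}_C(\vec L)}$, not as interior degenerations. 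Combined with the fact that the tuple-independent Floer data $(H^{L,L'},J^{L,L'})$ is chosen on the strip-like ends from a prior regular class, this gives the desired transversality on the locus of dimension $\leq 1$.
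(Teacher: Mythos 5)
Your proposal is correct and follows essentially the same route as the paper: both decompose a cluster into its Floer vertices, its pearly edges, and its purely holomorphic discs, obtain surjectivity of the linearized operator on each piece by the standard perturbations (Hamiltonian forms and $J$ on thick parts of the polygons as in Seidel, metrics for the Morse data as in Mescher, and simpleness/absolute distinctness of discs via monotonicity and Lazzarini's decomposition as in Charest), and assemble the dimension formula from index additivity of the polygonal Maslov index together with the $n$-codimensional matching at nodes. The paper phrases this as a reduction to the cited results, using the hypothesis $d_A^{\bullet}\leq 1$ plus index additivity to check that each quantum--Morse subconfiguration $T_i^F$ separately satisfies the dimension bound required by those results, which is the same content as your stratum-by-stratum Sard--Smale argument.
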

\begin{defn}\label{defEreg}
    The elements of $E_\textnormal{reg}\subset E$ will be called regular perturbation data.
\end{defn}
\noindent The proof of Proposition \ref{tc1} is analogous to the proof of Proposition 4.7 in \cite{Sheridan2011OnPants}, with the difference that we are not using Hamiltonian perturbations for pearly trees. As it will be apparent from the discussion below, this is the reason why we are dealing with moduli spaces of virtual dimension $\leq 1$ only (cfr. \cite{Biran2009LagrangianHomology}). We now explain how to account for this difference.\\
Fix $p\in E$ (which we will omit from the notation of Floer and perturbation data). Assume first that $\vec{L^F}=L$, that is $L=L_i$ for any $i$, and pick critical points $x_1,\ldots, x_d, x^+$ of $f^L$ and a class $A\in \pi_2(M,\vec{L})=\pi_2(M,L)$. Then the virtual dimension of $$\mathcal{M}^{d+1}(x_1,\ldots, x_d;x^+;A;((f^L_T,g^L_T)_T,J^L))$$ where $(f^L_T,g^L_T)_T$ is some choice of (Morse) perturbation datum for $L$ (and the notation makes sense as $d^F=1$) and $J^L$ is part of the Floer data for $L$, is $$\mu(A)+\sum_{i=1}^d(|x_i|-n)-|x^+|+d-2.$$ Moreover, assuming that the virtual dimension is $\leq 1$, then for a generic choice of the almost complex structure $J^L$ and (Morse) perturbation data, the Morse functions are in generic position, the evaluation maps from the holomorphic discs to the Lagrangian is transverse to stable and unstable manifolds of our Morse functions, and our moduli spaces only contain configurations made of simple and absolutely distinct disks joined by absolutely distinct Morse flowlines (see \cite{Mescher2018PerturbedCohomology} for the Morse part, and \cite{Charest2012SourceComplexes} for simpleness of holomorphic disks, proved by applying the results from \cite{Lazzarini2011RelativeCurves} following ideas from \cite{Biran2009LagrangianHomology}), which are the conditions needed in order to ensure regularity of moduli spaces (see \cite{Mcduff2012J-holomorphicTopology}). In this case, it is crucial that the minimal Maslov number $N_L$ of $L$ is at least $2$, as bubbling of pseudoholomorphic disks is a codimension $1$ phenomenon.\\
We now take care of the general situation. Consider $\vec{L}$ such that $m_0^e(\vec{L})=0$, i.e. $L_0\neq L_d$. Pick $\vec{x}_i:=(x^i_1,\ldots,x^i_{m_i-1})$, $i=0,\ldots,d^R$, $\gamma_j\in \mathcal{O}\left(H^{\overline{L_{j-1}},\overline{L_j}}\right)$, $i=1,\ldots,d^R$, $\gamma_+\in \mathcal{O}\left(H^{\overline{L_0},\overline{L_{d^R}}}\right)$ and $A\in \pi_2(M,\vec{L})$ such that $d_{A}^{(\vec{x}_i)_i, (\gamma_j)_j,\gamma^+}\leq 1$. Then, due to the fact that the Maslov index for polygonal maps respects breaking and disk bubbling (see \cite{Oh2015SymplecticHomology}), it follows from Estimate \ref{te1} that for any $i\in \{0,\ldots, d^F\}$ we have $$\mu\left(\sum_{v\in V(T_i^F)}[u_v]+ \sum_{e\in E_i^\textnormal{int}(T)} B_e + \sum_{\substack{k\in \{0,\ldots, d^R\} \\ k^F=i}}\sum_{j=1}^{\overline{m_{k}}}B_j^{k}\right) + \sum_{\substack{k\in \{0,\ldots, d^R\} \\ k^F=i}}\left(\sum_{j=1}^{\overline{m_k}}(|x_j^k|-1) + m_k\right)-2 \leq 1$$ (see page \pageref{fundamentaldecomposition} for the definition of the subtrees $T^F_i$).
In particular, as above, we can apply the results contained in \cite{Mescher2018PerturbedCohomology, Charest2012SourceComplexes} to any configuration associated to any connected component of $T_i^F$. This arguments fills in the gap between the situation in \cite{Sheridan2011OnPants} and ours.

\subsection{Definition of $\mathcal{F}uk(M)$} Let $p\in E_\textnormal{reg}$ be a regular perturbation datum. We recall that $\Lambda$ denotes the standard Novikov field over $\Z_2$ (see Section \ref{monotonelags}). Given a couple $(L_0,L_1)$ of Lagrangians in $\mathcal{L}^{m,\textbf{d}}(M,\omega)$ we define its $p$-Floer vector spaces $CF(L_0,L_1;p)$ as follows:
\begin{enumerate}
    \item if $L_0\neq L_1$ we define $$CF(L_0,L_1;p):=\bigoplus_{\gamma\in \mathcal{O}(H^{L_0,L_1}_p)}\Lambda\cdot \gamma$$ where $H^{L_0,L_1}_p$ is part of the Floer datum for $(L_0,L_1)$ prescribed by $p$;
    \item if $L_0= L_1=:L$ we define $$CF(L_0,L_1;p):=\bigoplus_{x\in \Crit(f^{L}_p)}\Lambda\cdot x$$ where $f^L_p$ is part of the Floer datum for $(L_0,L_1)$ prescribed by $p$.
\end{enumerate}
We will often suppress $p$ from the notation when there is no risk of confusion.\\
Let $d\geq 1$ and consider a tuple $\vec{L}=(L_0,\ldots,L_d)$ of Lagrangians in $\mathcal{L}^{m,\textbf{d}}(M,\omega)$. In the following we define a map $$\mu_d: CF(L_0,L_1)\otimes \cdots \otimes CF(L_{d-1},L_d)\to CF(L_0,L_d)$$ which depends on the choice of the perturbation datum $p\in E_\textnormal{reg}$. First, we assume $L_0\neq L_d$. Let $\vec{x}_i:=(x^i_1,\ldots,x^i_{m_i-1})$, $i=0,\ldots,d^R$, where $x^i_j\in \Crit(f^{\overline{L_i}})$ are critical points, $\gamma_j\in \mathcal{O}(H^{\overline{L_{j-1}},\overline{L_j}})$, $i=1,\ldots,d^R$ are orbits (see page \pageref{defofmis} for the definition of $d^R$ and $m_i$). Then we define $$\mu_d(\vec{x}_0, \gamma_1, \vec{x}_1,\ldots,\gamma_{d^R}, \vec{x}_{d^R}):=\sum_{\gamma^+,\, A}\sum_u T^{\omega(u)}\cdot \gamma^+$$ where the first sum runs over orbits $\gamma^+\in \mathcal{O}(H^{L_0,L_d})$ and classes $A\in \pi_2(M,\vec{L})$ such that $$ d_{A}^{(\vec{x}_i)_i, (\gamma_j)_j,\gamma^+}=0,$$ and second sum runs over Floer clusters $$u\in \mathcal{M}^{d+1}\left(\vec{x}_0, \gamma_1, \vec{x}_1,\ldots,\gamma_{d^R}, \vec{x}_{d^R}; \gamma^+;A;p\right).$$
Assume now $L_0=L_d=:L$. Let  $\vec{x}_i:=(x_1^i,\ldots,x_{\overline{m_i}}^i)$ for $i=0,\ldots,d^R+1$, where $x^i_j\in \Crit(f^{\overline{L_i}})$ are critical points, orbits $\gamma_j\in \mathcal{O}(H^{\overline{L_{j-1}},\overline{L_j}})$ for any $i=1,\ldots,d^R$. Then we define $$\mu_d(\vec{x}_0, \gamma_1, \vec{x}_1,\ldots,\gamma_{d^R}, \vec{x}_{d^R}):=\sum_{x^+, A}\sum_u T^{\omega(u)}\cdot x^+$$ where the first sum runs over critical points $x^+\in \Crit(f^{L_0})$ and classes $A\in \pi_2(M,\vec{L})$ such that $$d_{A}^{(\vec{x}_i)_i, (\gamma_j)_j,x^+}=0,$$ and the second sum runs over Floer clusters $$u\in \mathcal{M}^{d+1}\left(\vec{x}_0, \gamma_1, \vec{x}_1,\ldots,\gamma_{d^R}, \vec{x}_{d^R}; x^+;A;p\right).$$

%Note that the definition of $\mu_d$ is well-defined. Indeed, if there is a curve $u$ in some $0$-dimensional component of some moduli space $\mathcal{M}$ of clusters, then for the associated combinatio of orbits and critical points the estimate \ref{te1} (resp. \ref{te2}) is automatically satisfied.\\

Before defining the $p$-Fukaya category of $M$, we show that the maps $\mu_d$ above are well-defined and satisfy the expected properties, that is the $A_\infty$-equations (see Equation \ref{ainfequation}). We have the following standard compactness result (see \cite{Seidel2008FukayaTheory, Charest2012SourceComplexes, Biran2008RigiditySubmanifolds}). 
\begin{prop}\label{cc1}
    Let $p\in E_\textnormal{reg}$. Then: \begin{enumerate}
        \item assume $L_0\neq L_d$ and consider generators $\vec{x}_i:=(x^i_1,\ldots,x^i_{m_i-1})$, $i=0,\ldots,d^R$, $\gamma_j\in \mathcal{O}\left(H^{\overline{L_{j-1}},\overline{L_j}}\right)$, $i=1,\ldots,d^R$ and $\gamma_+\in \mathcal{O}\left(H^{\overline{L_0},\overline{L_{d^R}}}\right)$ and a class $A\in \pi_2(M,\vec{L})$ as in Proposition \ref{tc1}, then:
        \begin{enumerate}
            \item if $d_{A}^{(\vec{x}_i)_i, (\gamma_j)_j,\gamma^+}=0$, the moduli space $$\mathcal{M}^{d+1}\left(\vec{x}_0, \gamma_1, \vec{x}_1,\ldots,\gamma_{d^R}, \vec{x}_{d^R}; \gamma^+;A;p\right)$$ is compact,
            \item if $d_{A}^{(\vec{x}_i)_i, (\gamma_j)_j,\gamma^+}=1$ it admits a compactification into a manifold with  boundary $$\overline{\mathcal{M}}^{d+1}\left(\vec{x}_0, \gamma_1, \vec{x}_1,\ldots,\gamma_{d^R}, \vec{x}_{d^R}; \gamma^+;A;p\right)$$ whose boundary points are in one-to-one correspondence with the terms in the $A_\infty$-equation \ref{ainfequation} for $(\vec{x}_0, \gamma_1, \vec{x}_1,\ldots,\gamma_{d^R}, \vec{x}_{d^R}; \gamma^+)$
            %realizes the $A_\infty$-operation for $(\vec{x}_0, \gamma_1, \vec{x}_1,\ldots,\gamma_{d^R}, \vec{x}_{d^R}; \gamma^+)$.
        \end{enumerate}
           %satisfying Equation \ref{te1} the $0$-dimensional component of moduli space $$\mathcal{M}^{d+1}\left(\vec{x}_0, \gamma^1, \vec{x}_1,\ldots,\gamma_{d^R}, \vec{x}_{d^R}; \gamma^+;A;p\right)$$ is compact, while the $1$-dimensional component admits a compactification into a manifold with corners $$\overline{\mathcal{M}}_1^{d+1}\left(\vec{x}_0, \gamma^1, \vec{x}_1,\ldots,\gamma_{d^R}, \vec{x}_{d^R}; \gamma^+;A;p\right)$$ whose boundary realizes the $A_\infty$-operation for $(\vec{x}_0, \gamma^1, \vec{x}_1,\ldots,\gamma_{d^R}, \vec{x}_{d^R}; \gamma^+)$;
        %$$\mu(A_i)+\sum_{j=1}^{m_i-1}|x_j^i|-n(m_i-2)+m_i-3\leq 1$$  for any $i=0,\ldots,d^R$
        \item assume $L_0=L_d$ and consider generators $x^+\in \Crit(f^{L_0})$, $\vec{x}_i:=(x_1^i,\ldots,x_{\overline{m_i}}^i)$ for $i=0,\ldots,d^R+1$, $\gamma_j\in \mathcal{O}\left(H^{\overline{L_{j-1}},\overline{L_j}}\right)$, $i=1,\ldots,d^R$ and a class $A\in \pi_2(M,\vec{L})$ as in Proposition \ref{tc1}, then:
        \begin{enumerate}
            \item if $d_{A}^{(\vec{x}_i)_i, (\gamma_j)_j,x^+}=0$, the moduli space $$\mathcal{M}^{d+1}\left(\vec{x}_0, \gamma_1, \vec{x}_1,\ldots,\gamma_{d^R}, \vec{x}_{d^R}; x^+;A;p\right)$$ is compact,
            \item if $d_{A}^{(\vec{x}_i)_i, (\gamma_j)_j,x^+}=1$ it admits a compactification into a manifold with boundary $$\overline{\mathcal{M}}^{d+1}\left(\vec{x}_0, \gamma_1, \vec{x}_1,\ldots,\gamma_{d^R}, \vec{x}_{d^R}; x^+;A;p\right)$$  whose boundary points are in one-to-one correspondence with the terms in the $A_\infty$-equation \ref{ainfequation} for $(\vec{x}_0, \gamma_1, \vec{x}_1,\ldots,\gamma_{d^R}, \vec{x}_{d^R}; x^+)$.
        \end{enumerate}
        %the $0$-dimensional component of moduli space $$\mathcal{M}^{d+1}\left(\vec{x}_0, \gamma^1, \vec{x}_1,\ldots,\gamma_{d^R}, \vec{x}_{d^R}; x^+;A;p\right)$$ is compact, while its $1$-dimensional component admits a compactification into a manifold with corners $$\overline{\mathcal{M}}_1^{d+1}\left(\vec{x}_0, \gamma^1, \vec{x}_1,\ldots,\gamma_{d^R}, \vec{x}_{d^R}; x^+;A;p\right)$$ whose boundary realizes the $A_\infty$-operation for $(\vec{x}_0, \gamma^1, \vec{x}_1,\ldots,\gamma_{d^R}, \vec{x}_{d^R}; x^+)$
    \end{enumerate}
\end{prop}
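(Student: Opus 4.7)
The plan is a standard Gromov--Floer compactness argument adapted to clusters, built on top of the transversality of Proposition \ref{tc1} and the additivity of the polygonal Maslov index and the Morse indices under all admissible degenerations. I would treat the case $L_0\neq L_d$ in detail; the case $L_0=L_d$ is completely analogous, with the Morse output $x^+$ in place of $\gamma^+$. The first step is a uniform energy bound: the geometric energy of a cluster $((r,T,\lambda),u)$ is controlled by $\omega(A)$ plus the action defect between its asymptotes plus the Hofer norm of the Hamiltonian part of $\mathcal{D}^{\vec{L}}_p$ over the compact space $\overline{\mathcal{R}^{d+1}_C(\vec{L})}$. Combined with $\varepsilon$-regularity for the Floer and Cauchy--Riemann equations and compactness of negative gradient trajectories on the closed Lagrangians $\overline{L_i}$, this produces $C^\infty_{\textnormal{loc}}$-limits away from a finite nodal set.

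The second step is to classify the codimension-one degenerations of a sequence in $\mathcal{M}^{d+1}(\vec{x}_0,\gamma_1,\ldots,\vec{x}_{d^R};\gamma^+;A;p)$. They are: strip-breaking along a strip-like end at an input or output orbit; Morse-trajectory breaking at a critical point of some $f^{\overline{L_i}}$ along an edge of $T$; an interior edge length $\lambda(e)\to\infty$ yielding two sub-clusters glued along an orbit or critical point; an edge length $\lambda(e)\to 0$, which remains in the interior of $\mathcal{R}^{d+1}_C(\vec{L})$ because the collar was added in Section \ref{sourcespaces}; sphere bubbling; and disc bubbling on a boundary arc. Sphere bubbling and negative-Maslov disc bubbling are generically absent because the corresponding moduli have codimension at least two under $N_L\geq 2$ and the simpleness hypothesis of Definition \ref{defsimple}. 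Maslov-two disc bubbling is handled by the classical monotonicity argument of \cite{Biran2009LagrangianHomology, Biran2013LagrangianI}: contributions of discs bubbling off at different boundary points on a common labelled arc cancel in pairs, using that $\mathbf{d}_L$ is independent of the marked point.

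Given this classification, part (a) of each item follows from Proposition \ref{tc1} together with index additivity: any codimension-one degeneration would produce a component of negative virtual dimension, which is empty for $p\in E_{\textnormal{reg}}$, so sequences stay in the interior and the zero-dimensional moduli space is a finite set. For part (b), an inductive gluing analysis combining Seidel's for Floer breakings \cite[Section 9h]{Seidel2008FukayaTheory}, Charest's for pearly edges \cite{Charest2012SourceComplexes}, and Mescher's for Morse breakings \cite{Mescher2018PerturbedCohomology} produces boundary charts that assemble into a compact one-manifold with boundary $\overline{\mathcal{M}^{d+1}}(\cdots;A;p)$. Each codimension-one stratum should then be matched bijectively with one of the three types of terms in equation \ref{ainfequation}: strip-breaking at the output with $\mu^1\mu^d$; strip-breaking at an input or a Morse-trajectory breaking at an input with $\mu^d(\id^i\otimes\mu^1\otimes\id^j)$; and an edge breaking $\lambda(e)\to\infty$, or a compound breaking producing a cluster with a lower pearl/tree shape, with $\mu^{i+j+1}(\id^i\otimes\mu^k\otimes\id^j)$. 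This identification is what makes the consistency requirements on strip-like ends, systems of ends, and perturbation data from Sections \ref{sourcespaces} and \ref{pdgen} indispensable: they ensure that the datum $\mathcal{D}^{\vec{L}}_p$ restricted to each boundary stratum agrees on the nose with the datum used to define the lower-order operations.

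The main obstacle is precisely this last bookkeeping. In the cluster framework, a single codimension-one stratum of $\overline{\mathcal{R}^{d+1}_C(\vec{L})}$ can simultaneously record the breaking of Morse trajectories, of pearly edges, and of Floer strips, and one must check that the $\Z_2$-count of the resulting boundary components realigns exactly into the three combinatorial families appearing in \ref{ainfequation}, with Maslov-two disc bubbling cancelling by the monotonicity pairing. Once the gluing theorems for the three flavours of degeneration are combined with the consistency of $p\in E_{\textnormal{reg}}$ and the associahedral description of $\overline{\mathcal{R}^{d+1}_C(\vec{L})}$ from Lemma \ref{lemcluster}, the identification of $\partial\overline{\mathcal{M}^{d+1}}$ with the $A_\infty$-equation terms follows, completing the proof of Proposition \ref{cc1}.
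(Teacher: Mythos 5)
Your overall strategy --- Gromov--Floer compactness from a uniform energy bound, classification of the codimension-one degenerations, dimension counting against Proposition \ref{tc1} via additivity of the polygonal Maslov index, and gluing along the associahedral strata of $\overline{\mathcal{R}^{d+1}_C(\vec{L})}$ to match the boundary with the terms of equation \ref{ainfequation} --- is the same as the paper's, and most of your classification is right, including the observation that $\lambda(e)\to 0$ and the shrinking of gradient segments inside a pearly edge land in the \emph{interior} of the moduli space by construction of the collar.

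The step that does not survive scrutiny is your treatment of Maslov-two disc bubbling. The paper disposes of \emph{all} side bubbling (a holomorphic disc bubbling off away from marked points and nodes) by the same mechanism you reserve for spheres and higher-Maslov bubbles: since $N_L\geq 2$ and the polygonal Maslov index is additive under bubbling and breaking, the residual cluster would lie in a moduli space of negative virtual dimension, hence is empty for $p\in E_\textnormal{reg}$. Your proposed substitute --- ``contributions of discs bubbling off at different boundary points on a common labelled arc cancel in pairs'' --- is not a codimension-one boundary phenomenon at all: two bubbling points on the same arc are joined by a one-parameter family of configurations, not by two isolated boundary points that could cancel, so as written this rules out nothing. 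The place where the hypothesis that all objects share the same $\mathbf{d}_L=\mathbf{d}$ genuinely enters is the residual case the index argument does \emph{not} exclude, namely a Maslov-two disc attached to a constant (trivial) component; there the cancellation over $\Z_2$ is between the two boundary arcs adjacent to that component, which are labelled by possibly \emph{different} Lagrangians with the same $\mathbf{d}$. So your argument both omits the index-drop exclusion of Maslov-two bubbles with nonconstant remainder and misattributes the pairing in the cancellation; repairing it means restoring the negative-dimension argument as the primary mechanism and, if one wants completeness, stating the constant-component cancellation with the correct (adjacent-arc) pairing.
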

\begin{rem}
    The statement about the boundary of $1$-dimensional components of moduli spaces of Floer clusters is not presented in the most accurate form to avoid notational complexity. However, the meaning should be clear as results of this kind are very standard in various construction of Fukaya categories (see for instance \cite[Section 9l]{Seidel2008FukayaTheory}).
\end{rem}
The proof of Proposition \ref{cc1} is a mix of standard arguments (for Morse trees \cite{Abouzaid2011APlumbings, Mescher2018PerturbedCohomology} and for Floer polygons \cite{Seidel2008FukayaTheory}) and the structure of the boundary of $1$-dimensional components comes from the fact that compactification of sources spaces for Floer clusters realize associahedra (see Section \ref{sourcespaces}) and from breaking of Morse flowlines along leaves as well as concentration of energy along strip-like ends. Notice that we do not have to care about shrinking of Morse trajectories between pseudoholomorphic disks as in \cite{Biran2007QuantumSubmanifolds}, as by construction the limit lies in the interior of our moduli spaces. There is just one case of bubbling that may a priori happen in $1$-dimensional case but we want to avoid in order for the boundary to look `as it should be' (that is, so that it realizes $A_\infty$-operations): what in \cite{Biran2007QuantumSubmanifolds} is referred to as `side bubbling', that is bubbling of pseudoholomorphic disks away from marked points. This cannot happen in our situation because of our assumption on the minimal Maslov number of Lagrangians in $\lagmd(M,\omega)$, and because the polygonal Maslov index respects bubbling and breaking, as it would give rise to a Floer cluster lying in a smooth manifold of negative dimension (exactly as in \cite{Biran2007QuantumSubmanifolds}).\\

We define the $p$-Fukaya category $\mathcal{F}uk^\textbf{d}(M,\omega;p)$ of $(M,\omega)$ as follows: the objects are monotone Lagrangians in $\lagmd(M,\omega)$, the morphism space between any two objects $L_0,L_1\in \lagmd(M,\omega)$ is the associated $p$-Floer complex $CF(L_0,L_1;p)$ and the $\mu_d$-maps are those defined above. When there is no risk of confusion we will drop $\textbf{d}$, $\omega$ and $p$ from the notation. The following is the main result of this section.
\begin{prop}
    For any $p\in E_\textnormal{reg}$, $\mathcal{F}uk^\mathbf{d}(M,\omega;p)$ is a strictly unital $A_\infty$-category.
\end{prop}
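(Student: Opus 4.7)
The proof decomposes into two parts: verifying that the operations $\mu_d$ satisfy the $A_\infty$-equations \eqref{ainfequation}, and producing, for each Lagrangian $L\in\lagmd(M,\omega)$, a strict unit $e_L\in CF(L,L;p)$. The first part is essentially routine given the compactness results of Proposition \ref{cc1}, while the second is the genuinely new point that the cluster model is designed to deliver.

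For the $A_\infty$-relations, I would fix a tuple $\vec{L}$, input generators (of the appropriate shape depending on whether $L_0=L_d$ or not), an output generator, and a homotopy class $A\in\pi_2(M,\vec{L})$ such that the associated cluster moduli space has virtual dimension one. By Proposition \ref{cc1}, this moduli space admits a compactification into a smooth one-manifold with boundary, and the boundary strata consist of two kinds of configurations: concentration of energy along a strip-like end, corresponding to the breaking of the Floer part of a cluster into two smaller clusters glued along a common orbit, and breaking of a Morse-pearl trajectory along an interior or exterior edge, corresponding to the breaking into two smaller clusters glued along a common critical point. A mod-$2$ count of boundary points, weighted by $T^{\omega(u)}$ and using additivity of area under gluing, reproduces exactly the terms on both sides of \eqref{ainfequation}. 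No other boundary strata contribute: side-bubbling is ruled out by monotonicity together with additivity of the polygonal Maslov index under bubbling, as recalled after Proposition \ref{cc1}.

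For strict unitality, I set $e_L:=m_L\in CF(L,L;p)$, the class of the unique maximum of the Morse function $f^L$. The three properties to check are $\mu_1(e_L)=0$, $\mu_2(e_L,x)=x=\mu_2(x,e_L)$ for all relevant $x$, and $\mu_d(\ldots,e_L,\ldots)=0$ for $d\geq 3$ with $e_L$ in any slot. The first follows directly from the pearl definition, since no rigid pearly trajectory can emanate from the maximum. For the second, since $W^u(m_L)=L$ the unilabelled Morse edge issuing from $m_L$ is unconstrained, and a generic-position identification with the trivial cluster (a constant strip plus a Morse flowline from $m_L$ to the other input's marked point) recovers the identity map on $x$. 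For the third, a virtual dimension count combined with a cobordism obtained by shrinking the length of the unilabelled edge attached to $e_L$ forces the rigid count to vanish.

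The main obstacle is precisely the last item: the vanishing $\mu_d(\ldots,m_L,\ldots)=0$ for $d\geq 3$. In the standard Fukaya category this identity fails and $m_L$ is only a homological unit; what makes $m_L$ a strict unit in the cluster model is that no Hamiltonian perturbation is imposed on the pseudoholomorphic components attached to the unilabelled parts of the fundamental decomposition, so the length of the unilabelled Morse edge at the slot of $m_L$ is genuinely free. The key argument is a one-parameter cobordism in the would-be rigid moduli space obtained by varying this edge length from $0$ to $\infty$, whose ends are forced to cancel in pairs once one sets up the combinatorial bookkeeping in the fundamental decomposition of $T$ developed in Section \ref{sectionlagtrees}. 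This is the cluster-model analog of the Biran-Cornea unitality argument in \cite{Biran2007QuantumSubmanifolds}, and the verification requires careful combinatorics but no essentially new geometric input.
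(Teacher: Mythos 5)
Your proposal is correct and follows the same overall strategy as the paper: the $A_\infty$-relations are read off from the boundary description in Proposition \ref{cc1}, and the strict unit is the unique maximum of the Morse function $f^L_p$. The two arguments diverge in how unitality is verified. For $\mu_2(e_L,\cdot)=\mathrm{id}$ the paper argues by contradiction on indices: it shows that a rigid configuration in $\mathcal{M}^3(e_L,x^-;x^+;A;p)$ with $A\neq 0$ would project to a configuration in $\mathcal{M}^2(x^-;x^+;A;p)$ whose index relation is incompatible with rigidity of the triangle, leaving only $A=0$ and a Morse trajectory forcing $x^-=x^+$ (and similarly for orbit inputs via an index-zero unparametrized strip); you instead invoke $W^u(e_L)=L$ and identify the rigid moduli space directly with the trivial cluster. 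Both are valid, and yours is closer to the classical Biran--Cornea picture. More substantively, you correctly flag that strict unitality also requires $\mu_d(\ldots,e_L,\ldots)=0$ for $d\geq 3$ (and in every slot for $d=2$), a point the paper's proof passes over in silence after checking only $\mu_2(e_L,\cdot)$; your sketch via the free parameter attached to the unilabelled edge at the $e_L$-slot (equivalently, the forgetful/dimension argument showing nothing with an unconstrained maximum input can be rigid) is the standard way to close that gap, so your write-up is in this respect more complete than the paper's.
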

\begin{proof}
    The fact that $\mathcal{F}uk^\textbf{d}(M,\omega;p)$ is an $A_\infty$-category follows directly from Proposition \ref{cc1}. We show that it is a strictly unital one. Let $L\in \lagmd(M,\omega)$ and denote by $e_L\in \Crit(f^L_p)$ the (unique) maximum of the Morse function $f^L_p\colon L\to \R$. Note that for any critical point $x^+\in \Crit(f^L_p)$ and any non-trivial class $0\neq A\in \pi_2(M,L)$ the moduli space $\mathcal{M}^2(e_L,x^+;A;p)$ is at least one dimensional. From this and the well known fact that the top Morse homology of $L$ is isomorphic to the coefficient field $\Lambda$ it follows that $e_L$ is a cycle.\\ Let now $x^-\in \Crit(f^L_p)$. Assume there is a non-trivial class $0\neq A\in \pi_2(M,L)$ and another critical point $x^+\in \Crit(f^L_p)$ such that  $$|e_L|+|x^-|-|x^+|+\mu(A) -n = 0 \textnormal{   and   }\mathcal{M}^3(e_L,x^-;x^+;A;p)\neq \emptyset$$ As $|e_L|=n$ the first equation may be rewritten as $\mu(A)+|x^-|-|x^+|=0$. At this point however, the existence of a trajectory in $\mathcal{M}^3(e_L,x^-;x^+;A;p)$ implies the existence of a trajectory in $\mathcal{M}^2(x^-;x^+;A;p)$ which in turn implies $\mu(A)+|x^-|-|x^+|-1=0$, a contradiction. Assume now that there is a trajectory in $\mathcal{M}^3(e_L,x^-;x^+;0;p)$. This implies $|x^-|=|x^+|$ (as $\mu(0)=0$) and that there is a Morse trajectory from $x^-$ to $x^+$, so that $x^-=x^+$. We hence proved $\mu_2(e_L,x^-)=x^-$ for any $x^-\in \Crit(f^L_p)$. \\ Let now $L_1\neq L$ and consider an orbit $\gamma^-\in \mathcal{O}(H^{L,L_1}_p)$. Assume there is some orbit $\gamma^+\in \mathcal{O}(H^{L,L_1}_p)$ and some class $A\in \pi_2(M,\vec{L})$ such that $$\mu(\gamma^-,\gamma^+;A)+|e_L|-n=0\textnormal{   and   } \mathcal{M}^3_0(e_L,\gamma^-;\gamma^+;A;p)\neq 0$$ This implies in particular the existence of an unparametrized Floer $(H^{L,L_1}_p, J^{L,L_1}_p)$-strip of zero index from $\gamma^-$ to $\gamma^+$, which is possible if and only if $\gamma^-=\gamma^+$. We hence proved $\mu_2(e_L,\gamma^-)=\gamma^-$ for any $\gamma^-\in \mathcal{O}(H^{L,L_1}_p)$. In particular, $e_L$ is a strict unit for $L$, as claimed.    \end{proof}
\noindent The following result, whose proof we omit, can be proved in the same way as the analogous result for standard Fukaya categories in \cite[Chapter 10]{Seidel2008FukayaTheory}. Notice that it may be proved also by slightly extending the construction of continuation functors developed in Section \ref{contfunc}.
\begin{lem}\label{inv1}
    Let $p,q\in E_\textnormal{reg}$. Then $\mathcal{F}uk(M;p)$ is quasi equivalent to $\mathcal{F}uk(M;q)$.
\end{lem}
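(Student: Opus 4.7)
My plan is to prove Lemma \ref{inv1} by constructing a unital continuation $A_\infty$-functor $\mathcal{F}^{p,q}\colon \mathcal{F}uk(M;p)\to \mathcal{F}uk(M;q)$ that is the identity on objects (both categories have the same object set $\lagmd(M,\omega)$, so essential surjectivity is automatic), and whose linear component $\mathcal{F}_1^{p,q}$ is a quasi-isomorphism on each morphism space. This is precisely the cluster analogue of the standard construction of continuation $A_\infty$-functors in \cite[Chapter 10]{Seidel2008FukayaTheory} and a mild generalization of the construction outlined in Appendix \ref{contfunc}.

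The construction proceeds as follows. For each tuple $\vec{L}=(L_0,\ldots,L_d)$ I would choose a domain-dependent perturbation datum on $\mathcal{R}^{d+1}_C(\vec{L})$ that agrees with $p$ on the cylindrical part of every input strip-like end and on every pearly edge emanating from an input puncture, and agrees with $q$ on the output strip-like end and the output pearly edges, interpolating smoothly in between. Consistency of this family under the gluing maps $\gamma^{T,k}$ of Section \ref{sourcespaces} imposes the usual compatibility along $\partial\overline{\mathcal{R}^{d+1}_C(\vec{L})}$: breakings at an input edge must be perturbed by $p$-data, breakings at the output by $q$-data, while mixed breakings inherit the interpolating datum. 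Existence of such a family, and regularity within a residual subset, is shown by replaying the argument of Section \ref{transversality for floer clusters} in the parametrized setting. The maps $\mathcal{F}_d^{p,q}$ are then defined by counting rigid Floer clusters with the interpolating data, and the $A_\infty$-functor equations are extracted from the codimension-one boundary analysis of the $1$-dimensional components, in direct analogy with Proposition \ref{cc1}.

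It remains to show that $\mathcal{F}^{p,q}$ is a quasi-equivalence, for which it suffices to check that $\mathcal{F}_1^{p,q}\colon CF(L_0,L_1;p)\to CF(L_0,L_1;q)$ induces an isomorphism on cohomology for every pair $L_0,L_1\in\lagmd(M,\omega)$. I would do this by constructing the reverse functor $\mathcal{F}^{q,p}$ by the same recipe and then producing an $A_\infty$-homotopy between $\mathcal{F}^{q,p}\circ\mathcal{F}^{p,q}$ and the identity functor (and similarly in the other order). Such a homotopy arises from an additional $[0,1]$-parameter family of interpolating perturbation data connecting the concatenation of the two interpolations to the constant $p$-datum; counting rigid clusters in the resulting parametrized cluster moduli spaces defines the components of an $A_\infty$-prefunctor whose defect, read off from the boundary of the $1$-parameter family, is exactly the difference $\mathcal{F}^{q,p}\circ\mathcal{F}^{p,q}-\mathrm{id}$.

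The main technical obstacle is the parametrized version of Proposition \ref{tc1} and the corresponding compactness statement extending Proposition \ref{cc1}: one must guarantee that with the enlarged, interpolating perturbation data (and then the further $[0,1]$-parameter of homotopies of interpolations), the relevant cluster moduli spaces remain transverse and that the boundary of their $1$-dimensional components consists only of the configurations contributing to the $A_\infty$-functor, resp. $A_\infty$-homotopy, equations. The Morse-Bott pearly edges require particular attention: one has to rule out bubbling off inside the Morse segments as well as side-bubbling of holomorphic discs (excluded, as in Section \ref{mbfuk}, by the assumption $N_L\geq 2$ together with additivity of the polygonal Maslov index under breaking and bubbling). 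Once these parametrized versions are in place the argument reduces to the standard Seidel template, now transcribed into the cluster language of Section \ref{mbfuk}.
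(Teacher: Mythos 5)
Your overall strategy is one of the two routes the paper itself points at: the paper omits the proof, remarking that it can be done either by the algebraic argument of \cite[Chapter 10]{Seidel2008FukayaTheory} (enlarging the category to contain all choices at once) or by extending the continuation functors of Appendix \ref{contfunc} to perturbation data with different Morse parts. You have chosen the second, geometric route, and in that sense the plan is sound. Two remarks on efficiency: to see that $\mathcal{F}_1^{p,q}$ is a quasi-isomorphism you do not need the full inverse functor together with an $A_\infty$-homotopy of functors (which requires yet another layer of domain combinatorics, Sylvan's ``continuation homotopies''); a chain-level homotopy for the linear components alone suffices, and this is the classical continuation-map argument that Proposition \ref{inv} invokes. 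Also, for genuinely different Morse parts the self-morphism complexes $CF(L,L;p)$ and $CF(L,L;q)$ are pearl complexes built from different Morse functions, so your interpolation must include honest Morse interpolation data along the unilabelled edges and the coloring must extend into the metric-tree parts of the clusters; the paper deliberately restricts to a common Morse part to avoid writing this out.

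There is, however, a concrete gap in the construction as you describe it. You propose to define $\mathcal{F}_d^{p,q}$ by counting rigid clusters over $\mathcal{R}^{d+1}_C(\vec{L})$, i.e.\ over the associahedron, equipped with an interpolating perturbation datum that is ``$p$ at the inputs and $q$ at the output.'' This cannot work: the $A_\infty$-functor equations contain terms of the form $\mu^q_k(\mathcal{F}_{i_1}\otimes\cdots\otimes\mathcal{F}_{i_k})$, which must appear as codimension-one boundary strata where a $q$-perturbed disc at the root splits off \emph{several} interpolating pieces simultaneously; such strata live in the multiplihedron, not the associahedron, and the correct source spaces are the stacked-cluster moduli spaces $\mathcal{R}^{d+1,2}_C(\vec{L})$ of dimension $d-1$, carrying the extra stack-height parameter $w\in(0,\infty)$ and the colored-tree combinatorics of Sections \ref{colored trees}--\ref{stackeddiscs} (this is also why the functor components count clusters of index $d_A+1=0$ rather than $d_A=0$). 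Already for $d=2$ the space $\mathcal{R}^3_C(\vec{L})$ is a point, so there is no $1$-parameter family whose two ends produce $\mathcal{F}_1(\mu_2(a,b))$ and $\mu_2(\mathcal{F}_1(a),\mathcal{F}_1(b))$. Relatedly, your stated consistency requirement --- $p$-data at input breakings, $q$-data at output breakings, interpolating data at ``mixed'' breakings --- is not coherently realizable on $\overline{\mathcal{R}^{d+1}_C(\vec{L})}$: at an interior node the inner component's output would carry $q$-data while the outer component's corresponding input carries $p$-data. The coloring (the locus where the transition from $p$ to $q$ occurs, constrained to lie at a fixed distance from the root) is exactly the device that resolves this, and it is the missing ingredient in your sketch.
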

\noindent It can moreover be proved that our construction of the Fukaya category is quasi-equivalent to the standard one contained in \cite{Seidel2008FukayaTheory}. This may be proved as in \cite{Sheridan2011OnPants} or via the machinery of PSS functors, which will appear in the author's PhD thesis.

%===========================================
%===========================================
%===========================================

\subsection{Weakly-filtered structure on $\mathcal{F}uk(M)$}\label{wfs}

The content of this subsection is an adaptation of \cite[Section 3.3]{Biran2021LagrangianCategories} to the cluster setting. Fix a regular perturbation datum $p\in E_\textnormal{reg}$. Let $(L_0,L_1)$ be a tuple of Lagrangians in $\lagmd(M,\omega)$. We define the $p$-action functional \label{actionfunctionaldefinition}$$\mathbb{A}_p: CF(L_0,L_1;p)\rightarrow \mathbb{R}$$ for $(L_0,L_1)$ as follows: consider a generator $g\in CF(L_0,L_1;p)$ and a Novikov series $P:=\sum a_iT^{\lambda_i}\in \Lambda$ with $a_i\neq 0$ for all $i$, ordered in such a way that $\lambda_0<\lambda_i$ for any $i\in \Z_{\geq 1}$, then we set $$\mathbb{A}_p(Pg):=-\lambda_0+\int_0^1 H^{L_0,L_1}\circ g \ dt, \ \ \ \mathbb{A}_p(0):=-\infty$$
and extend it for $\sum P_ig_i\in CF(L_0,L_1:p)$ as $$\mathbb{A}_p\Bigl(\sum P_ig_i\Bigr):= \max_i \mathbb{A}_p(P_ig_i)$$
We use $\mathbb{A}_p$ to define an increasing $\R$-filtration on $CF(L_0,L_1;p)$ via $$CF^{\leq \alpha}(L_0,L_1;p):= \mathbb{A}_p^{-1}(-\infty, \alpha]$$ for any $\alpha\in \mathbb{R}$. It is well known that this filtration endows $(CF(L_0,L_1;p),\mu_1)$ with the structure of a filtered chain complex (see for instance \cite{Biran2021LagrangianCategories}) in the case $L_0\neq L_1$, and is trivial to see that it does also in the case $L_0=L_1$ (indeed in this case all generators lie at filtration level $0$ since we arbitrarily set $H^{L,L}=0$ at the beginning of Section \ref{pdgen}).
We have the following central result.
\begin{prop}\label{wfilt}
There is a non-empty subset $E^\textnormal{wf}_\textnormal{reg}\subset E_\textnormal{reg}$ such that for any $p\in E^\textnormal{wf}_\textnormal{reg}$ the filtrations described above on $p$-Floer complexes induce on $\mathcal{F}uk(M;p)$ the structure of a weakly-filtered $A_\infty$-category with units at filtration level $\leq 0$.
\end{prop}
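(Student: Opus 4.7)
The plan is to follow the action-energy method from \cite[Section 3.3]{Biran2021LagrangianCategories}, adapted to the cluster model. The two key observations are: (i) a Floer cluster decomposes into Floer polygons (at vertices of $T_\textnormal{red}$), pseudoholomorphic disc bubbles (at vertices of $T\setminus T_\textnormal{red}$), and Morse-pearl edges, and the action defect can only come from the Floer-polygon pieces; (ii) in the cluster model the self-Floer complex uses $H^{L,L}=0$, so every generator sits at action level $0$ and the unit automatically lies at filtration level $\leq 0$.

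First, I would derive an action-energy inequality for clusters. Fix a tuple $\vec{L}$ and a cluster $(r,T,\lambda,u)$ contributing to $\mu_d$. For each $v\in V(T_\textnormal{red})$, the Floer polygon $u_v$ defined by $(K^v,J^v)$ satisfies the classical identity
\begin{equation*}
\sum_{i\in \textnormal{in}(v)} \mathbb{A}_p(\gamma_i) - \mathbb{A}_p(\gamma_{\textnormal{out}(v)}) \;=\; E(u_v) \;-\; \int_{S_{r,T,\lambda}(v)} R_{K^v}(u_v),
\end{equation*}
where $R_{K^v}$ is the curvature two-form of the Hamiltonian-valued one-form $K^v$. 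Along the pseudoholomorphic discs $u_v$ for $v\in V(T\setminus T_\textnormal{red})$ and along the Morse-pearl edges, the action is preserved up to the addition of the non-negative quantity $\omega(u_v)$ or $\omega(u_e)$ (since $H^{L,L}=0$ makes the action along pearl trajectories constant). Summing along the tree gives
\begin{equation*}
\mathbb{A}_p(\textnormal{output}) - \sum_i \mathbb{A}_p(\textnormal{input}_i) \;\leq\; \sum_{v\in V(T_\textnormal{red})} \int_{S_{r,T,\lambda}(v)} R_{K^v}(u_v) - \omega(u),
\end{equation*}
so the total defect is controlled by the curvature integrals over the Floer-polygon pieces and the (non-positive) contribution $-\omega(u)\leq 0$.

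Second, I would define $E^\textnormal{wf}_\textnormal{reg}\subset E_\textnormal{reg}$ as the subset of regular perturbation data whose Hamiltonian curvature is uniformly bounded order by order: for each $d\geq 2$ there should exist $\kappa_d<\infty$ with
\begin{equation*}
\sup_{\vec{L},\,(r,T,\lambda),\,v\in V(T_\textnormal{red})}\ \left|\int_{S_{r,T,\lambda}(v)} R_{K^v}\right| \;\leq\; \kappa_d,
\end{equation*}
the supremum running over all tuples $\vec{L}$ of length $d+1$ and all cluster configurations. Such data exist because (a) on strip-like ends $K^v=H^{L_i,L_{i+1}}dt$ has vanishing curvature, and (b) on the thick part of each fibre one may rescale $K^v$ to have arbitrarily small norm without disturbing the consistency conditions from Section \ref{pdgen}, by a construction mirroring \cite[Lemma 3.3.2]{Biran2021LagrangianCategories}. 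Setting $\varepsilon_d^\mathcal{A}:=|V(T_\textnormal{red})|_\textnormal{max}\cdot\kappa_d\leq d\kappa_d$ then gives the weakly-filtered estimate for $\mu_d$, while $\mu_1$ preserves filtration exactly (the Morse differential lives at action level $0$, and the Floer cylinder satisfies the classical action-energy identity with vanishing curvature). For the unit, the unique maximum $e_L\in\textnormal{Crit}(f^L)$ represents the (strict) homological unit by the previous proposition and satisfies $\mathbb{A}_p(e_L)=0$, giving $u^\mathcal{A}=0$.

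The main obstacle is the simultaneous construction of perturbation data whose curvature is bounded across all tuples $\vec{L}$ and all cluster moduli spaces while respecting the consistency conditions at every boundary stratum of $\overline{\mathcal{R}^{d+1}_C(\vec{L})}$. I would handle this by an induction on $d$: assuming the curvature bound has been arranged for all tuples of length $\leq d$, one extends over $\mathcal{R}^{d+1}_C(\vec{L})$ using a partition-of-unity on the thick part, interpolating between the data prescribed on boundary strata (which is already bounded by induction) and a chosen small Hamiltonian-valued one-form in the interior. Since both regularity (Proposition \ref{tc1}) and the curvature bound are stable under small $C^\infty$-perturbations supported on the thick part, this procedure yields a non-empty $E^\textnormal{wf}_\textnormal{reg}\subset E_\textnormal{reg}$, completing the proof.
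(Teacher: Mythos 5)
Your proposal follows essentially the same route as the paper: the vertexwise action--energy identity with curvature term for the Floer-polygon components, the observation that pearl edges and holomorphic discs contribute only non-negative area (hence preserve action), a uniform order-by-order curvature bound defining $E^\textnormal{wf}_\textnormal{reg}$ (the paper obtains this by applying the argument of \cite[Proposition 3.1]{Biran2021LagrangianCategories} vertexwise), and the fact that the strict unit $e_L\in\Crit(f^L)$ sits at action level $0$ because $H^{L,L}=0$ in the cluster model. The argument is correct and matches the paper's proof in all essentials.
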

\begin{rem}
    The above result is the analogous of \cite[Proposition 3.1]{Biran2021LagrangianCategories} in the cluster setting. The main difference is that we get units at vanishing filtration level as a result of the use of the cluster setting, as opposed to the standard setting used in the cited paper.
\end{rem}
\noindent In the remaining of this section, we sketch a proof of why in general the action functional only induces a weakly-filtered structure instead of a genuine filtered one (see Section \ref{ainfpre} for the definition of filtered and weakly-filtered $A_\infty$-categories). Central to this kind of results are energy computations for Floer-like curves with asymptotic conditions. Recall that the energy of a Floer like curve $u:S\rightarrow M$ (where $S$ is some Riemann surface with punctures) is defined via $$E(u):= \int_S|Du-X^{K}|^2\sigma$$ where $K$ is some Hamiltonian perturbation datum on $S$, $X^K$ is the indued Hamiltonian vector field and $\sigma$ is some area form on $S$ (from the choice of which the value of $E$ is independent, see \cite{Mcduff2012J-holomorphicTopology}) and the norm involved is defined on the space of linear maps $TS\rightarrow u^*TM$ as $$|L|:= \sqrt{\frac{\omega(L(v), J(L(v)))+\omega(L(jv), J(L(jv)))}{\sigma(v,jv)}}$$ where $j$ is the almost complex structure on $S$ (from which $E$ definitely depends on). Given a Floer cluster $(r,T,\lambda,u)$ (in some moduli space of clusters) we write $$E(u):=\sum_{v\in V(T)}E(u_v)+\sum_{e\in E(T)}E(u_e)= A+E(u_F)$$ where $u_F$ is defined on page \pageref{uf}.

We start by showing that $\mu_1$ preserves filtration. Let $(L_0,L_1)$ be a couple of different Lagrangians in $\mathcal{L}^{m,\textbf{d}}(M,\omega)$. Notice that for the standard strip $\R\times [0,1]$, the norm in the definition of the energy is defined with respect to the Riemannian metric $g:=g_{\omega, J^{L_0,L_1}_p}$ induced by $\omega$ and $J^{L_0,L_1}_p$. Consider two orbits $\gamma_-, \gamma_+\in \mathcal{O}(H^{L_0,L_1}_p)$, a class $A\in \pi_2(M,L_0\cup L_1)$ such that $\mu(\gamma_-,\gamma_+;A)-1=0$ and a Floer strip $u\in \mathcal{M}^2(\gamma_-,\gamma_+;A;p)$, then:
\begin{align*}
    E(u) & = \int_{\R\times [0,1]} |\partial_su|^2 \ dsdt = \int_{\R\times [0,1]} g(\partial_su, J^{L_0,L_1}_p(-J^{L_0,L_1}_p\partial u - \nabla H^{L_0,L_1}_p(u)) dsdt  \\ &= \int_{\R\times [0,1]} \omega(\partial_su, \partial_t u) \, dsdt + \int_0^1H^{L_0,L_1}_p(t,u(s,t))\ dt |_{s=- \infty}^{s=+ \infty} \\ & = \omega(u)-\int_0^1H^{L_0,L_1}_p\circ \gamma_+\ dt + \int_0^1 H^{L_0,L_1}_p\circ \gamma_-\ dt \\
    &= \mathbb{A}_p(\gamma_-)-\mathbb{A}_p(T^{\omega(u)}\gamma_+).
\end{align*}

\noindent From this and the definition on the filtration on Floer complexes it follows that $$\mathbb{A}_p(\mu_1(\gamma_-))\leq \mathbb{A}_p(\gamma_-)$$ as claimed.\\ Consider now a couple $(L,L)$ of identical Lagrangians in $\lagmd(M,\omega)$. Consider critical points $x_-,x_+\in \Crit(f^L_p)$ and a class $A\in \pi_2(M,L)$ such that $\mu(A)+|x_-|-|x_+|-1=0$ and consider a pearly trajectory $u\in \mathcal{M}^2(x_-;x_+;A;p)$. Then $$0\leq E(u)= \omega(u) = \mathbb{A}_p(x_-)-\mathbb{A}_p(T^{\omega(u)}x_+)$$ and hence $\mathbb{A}_p(\mu_1(x_-))\leq\mathbb{A}_p(x_-)$.\\ 
We perform similar calculations in the case of higher operations (see \cite{Seidel2008FukayaTheory, Biran2014LagrangianCategories}). Let $d\geq 2$ and consider first the $(d+1)$-tuple $\vec{L}:=(L,\ldots,L)$ where $L\in\mathcal{L}^{m,\textbf{d}}(M,\omega)$. Consider critical points $x_1,\ldots,x_d,x^+\in \Crit(f^L_p)$ and a class $A\in \pi_2(M,L)$ such that $d_{A}^{(x_i)_i,x^+}=0$ and pick a Floer cluster $$(r,T,\lambda,u)\in \mathcal{M}^{d+1}(x_-^1,\ldots,x_-^d;x^+;A;p).$$ This case is very similar to the case of a couple of equal Lagrangians above: we have $$0\leq E(u)=\omega(u)=\omega(A) = \sum_{i=1}^d\mathbb{A}_p(x_-^i)-\mathbb{A}_p(T^{\omega(u)}x^+)$$ by definition of the action functional, and hence $$\mathbb{A}_p(\mu^d(x_-^1,\ldots,x_-^d))\leq \sum_{i=1}^d\mathbb{A}_p(x_-^i)$$ that is, $\mu^d$ restricted to tuples made of equal Lagrangians preserves filtration.

Let $d\geq 2$ and $\vec{L}=(L_0,\ldots,L_d)$ be an arbitrary tuple of Lagrangians in $\lagmd(M,\omega)$. Assume $L_0\neq L_d$ first and consider $\vec{x}_i:=(x^i_1,\ldots,x^i_{m_i-1})$, $i=0,\ldots,d^R$, where $x^i_j\in \Crit(f^{\overline{L_i}})$ are critical points, orbits $\gamma_j\in \mathcal{O}(H^{\overline{L_{j-1}},\overline{L_j}})$, $i=1,\ldots,d^R$ and $\gamma_+\in \mathcal{O}(H^{L_0,L_d}_p)$ and a class $A\in \pi_2(M,\vec{L})$ such that $d_{A}^{(\vec{x}_i)_i, (\gamma_j)_j,\gamma^+}=0$ (see page \pageref{tc1}). Pick a Floer cluster $$(r,T,\lambda, u)\in \mathcal{M}^{d+1}_0\left(\vec{x}_0, \gamma_1, \vec{x}_1,\ldots,\gamma_{d^R}, \vec{x}_{d^R}; \gamma^+;A;p\right).$$ For any $v\in V(T)$ choose conformal coordinates $(s,t)$ on $S_{r,T,\lambda}(v)$ and write the area form as $\sigma_{r,T,\lambda}(v)=\rho_{r,T,\lambda}(v)ds\wedge dt$ in those coordinates. Locally, we can write the Hamiltonian term of the perturbation datum on $v$ induced by $p$ (see page \pageref{pdgen2}) as $$K^p_{r,T,\lambda}(v)= F_{r,T,\lambda}(v)ds + G_{r,T,\lambda}(v)dt$$ for domain dependent functions of the form $$F_{r,T,\lambda}(v)_{s,t}, G_{r,T,\lambda}(v)_{s,t}: M\to \R$$ for local coordinates $(s,t)$. Recall that $F_{r,T,\lambda}(v)=G_{r,T,\lambda}(v)=0$ for $v\notin V(T_\textnormal{red})$ by definition of perturbation data for Floer clusters (see page \pageref{pdgen2}). For any $v\in V(T_\textnormal{red})$ we have that on conformal patches \begin{align*}
    \int|du_v-X^{K^p_{r,T,\lambda}(v)}|^2\rho_{r,T,\lambda} \ dsdt = & \int \omega(\partial_su_v, \partial_t u_v) + \omega(X^{F_{r,T,\lambda}(v)}, X^{G_{r,T,\lambda}(v)}) +\\ &+ dF_{r,T,\lambda}(v)(\partial_tu_v)- dG_{r,T,\lambda}(v)(\partial_s u_v)\ ds dt 
\end{align*} holds.
With a couple more calculations and summing over different patches it can be shown that \begin{align*}
    0\leq E(u_v) & = \int_{S_{r,T,\lambda}(v)}|du_v-X^{K^p_{r,T,\lambda}(v)}|^2\rho_{r,T,\lambda} \ dsdt = \\ & =\omega(u_v) + \int_{S_{r,T,\lambda}(v)}d(u_v^*K^p_{r,T,\lambda}(v)) + \int_{S_{r,T,\lambda}(v)} R^p_{r,T,\lambda}(v)\circ u_v
\end{align*}
where $R^p_{r,T,\lambda}(v)\in \Omega^2(S_{r,T,\lambda}(v),C^\infty(M))$ is the so called \textit{curvature form}\label{curvatureform} of $K^p_{r,T,\lambda}(v)$ which in conformal coordinates can be written as $$R^p_{r,T,\lambda}(v) = \left(\partial_sG_{r,T,\lambda}(v)-\partial_t F_{r,T,\lambda}(v) + \omega\left(X^{F_{r,T,\lambda}(v)},X^{G_{r,T,\lambda}(v)}\right)\right) \ ds\wedge dt$$
From this it follows $$0\leq E(u_F) = \omega(u_F) - \int_0^1 H^{L_0,L_d}_p \circ \gamma_+\ dt + \sum  \int_0^1 H^{L_{i-1},L_i}_p \circ \gamma_-^i\ dt + \sum_{v\in V(T)}\int_{S_{r,T,\lambda}(v)} R^p_{r,T,\lambda}(v)\circ u_v$$ and hence $$0\leq E(u) = \omega(u) - \int_0^1 H^{L_0,L_d}_p \circ \gamma_+\ dt + \sum  \int_0^1 H^{L_{i-1},L_i}_p \circ \gamma_-^i\ dt + \sum_{v\in V(T)} \int_{S_{r,T,\lambda}(v)} R^p_{r,T,\lambda}(v)\circ u_v $$ by the definition of $u_F$ on page \pageref{uf}.\\
We call the integral $\int_{S_{r,T,\lambda}(v)} R^p_{r,T,\lambda}(v)\circ u_v $ the \textit{curvature term of the Floer disk $u_v$}, while the sum $$\sum_{v\in V(T)} \int_{S_{r,T,\lambda}(v)} R^p_{r,T,\lambda}(v)\circ u_v$$ will be called the \textit{curvature term of the Floer cluster $u$}.\\
Applying the arguments contained in the proof of Proposition 3.1 in \cite{Biran2021LagrangianCategories} vertexwise, we get that there is a non-empty subset $E^\textnormal{wf}_\textnormal{reg}\subset E_\textnormal{reg}$ of regular perturbation data such that for any $p\in E^\textnormal{wf}_\textnormal{reg}$, any $d\geq 2$, any tuple of Lagrangians $\vec{L}=(L_0,\ldots, L_d)$ and any possible Floer cluster $u$ on $\vec{L}$ of index $0$, the curvature term of $u$ can be absolutely bounded by a term $\epsilon_d= \epsilon_d(p)\in \R$ which does only depend on the number $d$ and on the perturbation data $p\in E^\textnormal{wf}_\textnormal{reg}$. Hence we conclude $$\mathbb{A}_p(\mu_d(\vec{x}_0, \gamma_1, \vec{x}_1,\ldots,\gamma_{d^R}, \vec{x}_{d^R}))\leq \sum_{i=1}^{d^R} \mathbb{A}_p(\gamma_-^i)+ \sum_{i=0}^{d^R}\sum_{j=1}^{m_i}\mathbb{A}_p(x_j^i) + \epsilon_d$$ as desired. For tuples with $L_0=L_d$ the result follows by analogous means. As the (strict) units lie at filtration level $\leq 0$ by definition, this shows that the above defined action functional endows $\mathcal{F}uk(M;p)$ with the structure of weakly-filtered $A_\infty$-category.

\begin{rem}\label{rem214}
\begin{enumerate}
    \item As we will show in Section \ref{actualconstruction}, by choosing the perturbation data carefully we can arrange that the curvature term of a Floer clusters are non-positive.
    \item Proposition \ref{wfilt} applies to the standard (i.e. without Morse trees, see \cite{Seidel2008FukayaTheory}) construction of Fukaya categories too. What is not apparent from the statement of the proposition, but is the reason for which we developed the Morse-Floer model for $\mathcal{F}uk(M)$ is the following trivial but crucial observation: in our case, units lie at filtration level $\leq0$, while this is not true in the standard case, due to the presence of Hamiltonian perturbation data. More about this will be explained in Section \ref{addend}.
\end{enumerate}
\end{rem}

\newpage

\section{$\epsilon$-Perturbation data and filtered structure on $\mathcal{F}uk(M)$}\label{actualconstruction}

 Recall from Section \ref{pdgen} that, given a symplectic manifold $(M,\omega)$ and a choice of family of monotone Lagrangians $\lagmd(M,\omega)$, we denote by $E$ the space of perturbation data for the cluster model of $\mathcal{F}uk(M)$ and by $E_\textnormal{reg}\subset E$ the subset of regular perturbation data, that is those perturbation data leading to a well-defined strictly-unital Fukaya category (see Definition \ref{defEreg}). In this section we construct families of perturbation data $E^\varepsilon_\textnormal{reg}\subset E_\textnormal{reg}$, one for any positive real number $\varepsilon>0$, such that for any $\varepsilon>0$ and any $p\in E^\varepsilon_\textnormal{reg}$ the $A_\infty$-category $\mathcal{F}uk(M,p)$ is filtered, i.e. the discrepancies $\varepsilon_d(p)$ (defined in Section \ref{wfs}) of the maps $\mu_d$ can be taken to be zero (see Section \ref{ainfpre}). In order to achieve this, we will restrict ourselves to perturbation data encapsulating Floer data whose Hamiltonian part is uniformly bounded: this will be the role of the parameter $\varepsilon>0$. The following theorem, which is a more accurate version of Thorem \ref{thmA}, summarizes the results and constructions contained in this section.
\vspace{0.5mm}
\begin{thm}\label{mainthm}
    For any $\varepsilon>0$ there is a non-empty family of perturbation data $E^\varepsilon_\textnormal{reg}\subset E_\textnormal{reg}$ such that for any $p\in E^\varepsilon_{\textnormal{reg}}$ the associated Fukaya category $\mathcal{F}uk(M;p)$ developed in Section \ref{mbfuk} is a filtered $A_\infty$-category.
\end{thm}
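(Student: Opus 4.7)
The plan is to construct the class $E^\varepsilon$ explicitly as perturbation data that are very rigid near the strip-like ends and flexible in the interior, extract a regular subset $E^\varepsilon_\textnormal{reg}\subset E^\varepsilon$ by Sard--Smale, and then rerun the energy computation from Section \ref{wfs} to show that for such data the curvature term is non-positive at every Floer vertex. Combined with Remark \ref{rem214}(2), which already places the strict units at filtration level $\leq 0$ in the cluster model, this upgrades the weakly-filtered structure of Proposition \ref{wfilt} to a genuinely filtered one, proving Theorem \ref{mainthm}.

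Concretely, I would fix Floer data $(H^{L_0,L_1},J^{L_0,L_1})$ for every distinct pair in $\lagmd(M,\omega)$ with $\|H^{L_0,L_1}\|_\infty$ controlled by $\varepsilon$, and declare $p\in E^\varepsilon$ if the Hamiltonian one-form $K^{\vec L}(r,T,\lambda)$ at every $v\in V(T_\textnormal{red})$ vanishes away from the union of images of the strip-like ends, and on each strip-like end $\epsilon_i$ has the form
$$K^{\vec L}(r,T,\lambda)\circ \epsilon_i \;=\; \beta_\varepsilon(s)\, H^{L_{i-1}^v,L_i^v}(t,\cdot)\, dt,$$
where $\beta_\varepsilon$ is a fixed smooth monotone cutoff equal to $0$ near the interior endpoint $s=0$ of the end and to $1$ on $|s|\geq 1$, with the sign of monotonicity dictated by whether the end is positive or negative. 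Consistency under breaking and gluing is immediate because all prescriptions are imposed locally on strip-like ends, which is exactly the region controlled by the gluing analysis of Section \ref{sourcespaces}. A Sard--Smale argument, localized to the interior region of the discs (where $K^p$ may vary freely) plus the Morse data on tree edges, produces a residual regular subset $E^\varepsilon_\textnormal{reg}\subset E^\varepsilon$ for which the moduli spaces of Floer clusters are smooth of the expected dimension, by the same reasoning that gave Proposition \ref{tc1}.

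The central step is the curvature computation. For $p\in E^\varepsilon_\textnormal{reg}$ only vertices $v\in V(T_\textnormal{red})$ can contribute, and even there $K^p=G\,dt$ with $F\equiv 0$ and $G(s,t,x)=\beta_\varepsilon(s)\,H^{L,L'}(t,x)$, so the curvature form derived in Section \ref{wfs} simplifies to
$$R^p_{r,T,\lambda}(v) \;=\; \beta_\varepsilon'(s)\, H^{L,L'}(t,x)\, ds\wedge dt,$$
supported in the narrow interpolation annuli of each strip-like end. Applying the fundamental theorem of calculus in $s$ to $\beta_\varepsilon(s)H^{L,L'}(t,u_v(s,t))$ rewrites $\int R^p(u_v)$ as the difference of boundary values $\int_0^1 H^{L,L'}\circ u_v(s,\cdot)\,dt$ at the two endpoints of the interpolation region, up to a term involving $dH(\partial_s u_v)$ that is absorbed into $\omega(u_v)$ via the Hamilton equation. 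Choosing the monotonicity direction of $\beta_\varepsilon$ appropriately on positive vs.\ negative ends makes these boundary contributions cancel exactly the asymptotic terms $-\int_0^1 H^{L_0,L_d}_p\circ \gamma^+\,dt+\sum_j\int_0^1 H^{L_{j-1},L_j}_p\circ \gamma_j\,dt$ in the energy identity, leaving $0\leq E(u)=\omega(u)-(\textnormal{sum of action values})$. Equivalently, $\mathbb{A}_p(\mu_d(\cdots))\leq \sum_i\mathbb{A}_p(\textnormal{inputs})$ with vanishing discrepancy, so each $\mu_d$ is strictly filtered; combined with Remark \ref{rem214}(2) this gives Theorem \ref{mainthm}.

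The main obstacle I anticipate is the bookkeeping of signs and orientations in the curvature cancellation: the same Hamiltonian $H^{L,L'}$ appears at a negative end of one disc (input) and at a positive end of another (output), so one must choose the shape of the monotone cutoff $\beta_\varepsilon$ compatibly on both types of ends in order to make the signed integrand non-positive in every case, rather than only on average. A related secondary obstacle is verifying that $E^\varepsilon_\textnormal{reg}$ is nonempty: after imposing the rigid asymptotic prescription defining $E^\varepsilon$, one must check that enough freedom remains in the interior of the discs (together with the Morse data on tree edges) to achieve transversality simultaneously across all tuples $\vec L$, all labelled trees $T$, and all homotopy classes $A\in\pi_2(M,\vec L)$.
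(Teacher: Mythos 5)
Your overall architecture matches the paper's (rigid Hamiltonian forms supported on strip-like ends interpolating to the Floer data via monotone cutoffs, consistency by gluing, transversality by deformation, units handled by the cluster model), but the central curvature step contains a genuine error, and it propagates into the transversality step.

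The claimed ``exact cancellation'' of the curvature term against the asymptotic action terms is false. On a strip-like end the curvature integrand is $\partial_s\beta(s)\,H^{L,L'}(t,u(s,t))\,ds\wedge dt$, where $u$ is evaluated in the interpolation annulus $|s|\in[0,1]$, \emph{not} at the asymptotic orbit; so integrating in $s$ does not reproduce $\int_0^1 H\circ\gamma\,dt$. Moreover, the asymptotic terms $-\int_0^1H\circ\gamma^+ +\sum_j\int_0^1H\circ\gamma_j$ are exactly the Hamiltonian parts of the action values of the generators and must \emph{survive} in the energy identity for the inequality $\mathbb{A}(\mu_d(\cdots))\leq\sum\mathbb{A}(\text{inputs})$ to come out; cancelling them would leave only $0\leq\omega(u)$, which does not bound the output action by the input actions. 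What one actually gets is a bound on each end's curvature contribution by $-\min_M H$ (negative ends) and $+\max_M H$ (positive end), over all of $M$. This is why the paper imposes the pinching condition $\mathrm{image}(H^{L_0,L_1})\subset(\delta\varepsilon,\varepsilon)$ with $\delta>\tfrac12$: one positive end contributes $<\varepsilon$ while each of the $\geq 2$ negative ends contributes $<-\delta\varepsilon$, giving a total $<\varepsilon(1-2\delta)<0$; interior breakings contribute $+(\varepsilon-\delta\varepsilon)$ each and are absorbed by the same counting (Remark \ref{thinparts1}). Your hypothesis controls only $\|H\|_\infty$ from above; with $\min_M H$ close to $0$ the negative ends contribute essentially nothing, the positive end contributes up to $\varepsilon$, and $\mu_d$ shifts filtration upward. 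Without the two-sided pinching the theorem fails for your class $E^\varepsilon$.

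Secondly, your transversality step is internally inconsistent with your curvature computation: you invoke freedom of $K^p$ in the interior of the discs for Sard--Smale, but the curvature computation assumes $K^p$ is supported in the strip-like ends. If one perturbs $K$ in the interior generically, the perturbation carries its own curvature, which can be positive and unbounded; the paper resolves this by allowing only admissible deformations supported on thick parts whose curvature is explicitly bounded by the slack $(d-1)\varepsilon(2\delta-1)>0$ produced by the strict inequality above (Definition \ref{transversalityepsilon}). Since your construction produces no such slack, there is nothing to absorb the regularizing perturbation, and nonemptiness of a regular \emph{and} filtered subclass is not established.
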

\noindent The proof of Theorem \ref{mainthm} occupies Section \ref{d^R=2} and \ref{epspd3}. The idea for the proof is to construct $\varepsilon$-perturbation data for strips and triangles first, and then extend the construction to more complicated clusters via gluing. After the proof of Theorem \ref{mainthm}, we will explain in Section \ref{addend} why it seems necessary to work with a hybrid Floer-Morse model for the Fukaya category in order to obtain an $A_\infty$-category in which the $\mu_d$-operations are filtered and also its units lie in filtration zero.
\begin{rem}
    As hinted above and as it will be clear from the construction, the role of $\varepsilon$ in our construction will be to control the oscillation of Hamiltonians in the Floer data of our Lagrangians. Note that the filtered structure of $\mathcal{F}uk(M,p)$ (and hence the structure of triangulated persistence category \cite{Biran2023TriangulationCategories} of the derived Fukaya category) will depend on the choice of $\varepsilon>0$ and $p\in E^\varepsilon_\textnormal{reg}$ (although in a quantifiable way, see Section \ref{contfunc}). Still, for some fixed choice of $\varepsilon$-perturbation data, the filtered structure on $\mathcal{F}uk(M,p)$ does contain interesting informations about $M$ and $\lagmd(M,\omega)$ and the limit for $\varepsilon\to 0$ can be computed for some invariants arising from filtrations, as we will show in forthcoming work. Anyway we plan to use the technology of continuation functors to define a limit Fukaya category for $\varepsilon\to 0$ in future work.
\end{rem}

%\subsection{Idea for the inductive construction} As we show in the coming sections, it is easy to define $\varepsilon$-perturbation data for tuples of Lagrangians whose reduced tuple is a couple or a triple. We will hence define $\varepsilon$-perturbation data for arbitrary families by doing induction on the length of the reduced tuple. In order to simplify the exposition, we will limit ourselves to two specific families of tuples of Lagrangians: tuples of different Lagrangians and tuples of almost different Lagrangians. In the following we explain why this is enough. The reason is indeed trivial but we think it is the case to specify it.

%As the final goal of this chapter is to produce perturbation data such that the associated $A_\infty$-maps preserve filtration, and because Morse flowlines and holomorphic disks do not carry Hamiltonian perturbations, $\varepsilon$-perturbation data will only put restrictions on the form of the Hamiltonian part of perturbation data. Because of this observation and Remark \ref{remind}, it will be enough to define 

%==============================================
%+=============================================
%===========================================

\subsection{$\varepsilon$-Perturbation data for $d^R\leq 2$}\label{d^R=2}
Fix $\varepsilon>0$ and $\delta\in \left(\frac{1}{2},1\right)$. The basic idea for the definition of our class $E^\varepsilon$ of perturbation data is to construct homotopies on strip-like ends between Floer data and the zero form, similarly to the case of continuation maps. Note that, as mentioned in Section \ref{mbfuk}, since Morse flowlines and homolomorphic disks do not carry Hamiltonian perturbations, we just have to choose special Floer data for couples made of different Lagrangians and special perturbation data for Floer polygons, which, via gluing, amounts to choose perturbation data for what we will call `fundamental' polygons: strips and $3$-punctured disks (with marked points).
\begin{rem}\label{remind}
From their definition in Section \ref{pdgen}, it is clear that perturbation data on some universal family depend on the a priori choice of (consistent) strip-like ends and system of ends on such family. However, the (explicit or inductive) definition of such perturbation data does not really depend on the choice of ends on a formal level, but only on the fact that such a choice has been done. In other words, if we perturb strip-like ends, then perturbation data change, altough their formal definition does not. 
\end{rem}

Let $\mathcal{F}$ be the set of smooth and increasing functions $\beta: \mathbb{R}\to [0,1]$ such that $\beta\vert_{(-\infty,0]}=0$ and $\beta\vert_{[1,\infty)}=1$. \label{definitionofF}
\begin{defn}
    Let $L_0,L_1\in \lagmd(M,\omega)$ such that $L_0\neq L_1$. An $(\varepsilon,\delta)$-Floer datum for $(L_0,L_1)$ is a choice of Floer datum $(H^{L_0,L_1},J^{L_0,L_1})$ as in Section \ref{pdgen} such that $$\text{image}(H^{L_0,L_1})\subset \left(\delta\varepsilon,\varepsilon\right)$$
Meanwhile, an $(\varepsilon,\delta)$-Floer datum for a couple $(L,L)$ of identical Lagrangians in $\lagmd(M,\omega)$ is just a choice of Floer datum as in Section \ref{pdgen}.
\end{defn}
\noindent Fix a choice of $(\varepsilon,\delta)$-Floer datum for any couple of Lagrangians in $\lagmd(M,\omega)$; all the perturbation data we will define in the following are to be taken with respect to this choice of Floer data.\\

Let $d\geq 2$ and consider a tuple $\vec{L}=(L_0,\ldots, L_d)$ of Lagrangians in $\lagmd(M,\omega)$ such that $d^R\leq 2$. We split the definition of $(\varepsilon,\delta)$-perturbation data for this case in five subcases. Recall that given a universal choice of strip-like ends and an element $(r,T,\lambda)\in \mathcal{R}^{d+1}_C(\vec{L})$ we denote by $\epsilon_i=\epsilon_i^{r,T,\lambda}$ for $i=0,\ldots, d$ the induced strip-like ends on $S_{r,T,\lambda}$ and by $\overline{\epsilon}_i$ for $i=0,\ldots, d^R$ the strip-like ends at positive marked points of type I (i.e. near punctures, see Definition \ref{defoftype}).

\begin{itemize}
    \item[\textbf{Case 1}] Assume first that $\vec{L_\textnormal{red}}=(L_0)$, i.e. $\vec{L}=(L_0,\ldots, L_0)$. Then a choice of $(\varepsilon,\delta)$-perturbation datum for $\vec{L}$ is just a choice of perturbation datum for $\vec{L}$ in the sense of Definition \ref{maindefofpd}.
    \item[\textbf{Case 2}] Assume now that $\vec{L_\textnormal{red}}=(\overline{L_0},\overline{L_1})$ and $m_0^e(\vec{L})=0$, that is $L_0\neq L_d$ (see page \pageref{defofmis} for the definition of the numbers $m_i$, $m_0^b$ and $m_0^e$). Then a choice of $(\varepsilon,\delta)$-perturbation datum for $\vec{L}$ is just a choice of perturbation datum for $\vec{L}$.
    
    \item[\textbf{Case 3}]\label{case3} Assume now that $\vec{L_\textnormal{red}}=(\overline{L_0},\overline{L_1})$ and $m_0^e(\vec{L})>0$, that is, in particular, $L_0=L_d$. Then a choice of $(\varepsilon,\delta)$-perturbation datum for $\vec{L}$ is a choice $\left((f^{\vec{L}},g^{\vec{L}}),(K^{\vec{L}},J^{\vec{L}})\right)$ of perturbation datum for $\vec{L}$ such that for any $(r,T,\lambda)\in \mathcal{R}^{d+1}_C(\vec{L})$ and for the unique vertex $v\in V(T_\textnormal{red})$ of $T_\textnormal{red}$ we have that: \begin{enumerate}
    \item $K^{r,T,\lambda}_v$ vanishes away from the strip-like end $\overline{\epsilon_i}$ for any $i\in \{0,1\}$;
    \item On the $i$th ($i=0,1,2$) negative strip-like end $\overline{\epsilon_i}$ we have $$K^{r,T,\lambda}_v=\overline{H^{L_{i-1},L_i}_{r,T,\lambda}}(s,t)dt$$ where $$\overline{H^{L_{i-1},L_i}_{r,T,\lambda}}: (-\infty,0]\times [0,1]\times M \longrightarrow \R$$ is of the form $$\overline{H^{L_{i-1},L_i}_{r,T,\lambda}}(s,t)= (1-\beta^{L_{i-1},L_i}_{r,T,\lambda}(s+1))H^{L_{i-1},L_i}_t$$ for some $\beta^{L_{i-1},L_i}_{r,T,\lambda}\in \mathcal{F}$.
\end{enumerate}
    \item[\textbf{Case 4}] Assume now that $\vec{L_\textnormal{red}}=(\overline{L_0},\overline{L_1}, \overline{L_2})$ and $m_0^e(\vec{L})=0$. A choice of $(\varepsilon,\delta)$-perturbation for $\vec{L}$ is a choice $\left((f^{\vec{L}},g^{\vec{L}}),(K^{\vec{L}},J^{\vec{L}})\right)$ of perturbation datum for $\vec{L}$ such that for any $(r,T,\lambda)\in \mathcal{R}^{d+1}_C(\vec{L})$ and for the unique vertex $v\in V(T_\textnormal{red})$ of $T_\textnormal{red}$ we have that: \begin{enumerate}
    \item $K^{r,T,\lambda}_v$ vanishes away from the strip-like end $\overline{\epsilon_i}$ for any $i\in \{0,1,2\}$;
    \item On the $i$th ($i=1,2$) negative strip-like end $\overline{\epsilon_i}$ we have $$K^{r,T,\lambda}_v=\overline{H^{L_{i-1},L_i}_{r,T,\lambda}}(s,t)dt$$ where $$\overline{H^{L_{i-1},L_i}_{r,T,\lambda}}: (-\infty,0]\times [0,1]\times M \longrightarrow \R$$ is of the form $$\overline{H^{L_{i-1},L_i}_{r,T,\lambda}}(s,t)= (1-\beta^{L_{i-1},L_i}_{r,T,\lambda}(s+1))H^{L_{i-1},L_i}_t$$ for some $\beta^{L_{i-1},L_i}_{r,T,\lambda}\in \mathcal{F}$;
    \item On the unique positive strip-like end we have $$K^{r}_v=\overline{H^{L_{0},L_d}_{r,T,\lambda}}dt$$ where $$\overline{H^{L_{0},L_d}_{r,T,\lambda}}: [0,+\infty)\times [0,1]\times M \longrightarrow \R$$ is of the form $$\overline{H^{L_0,L_d}_{r,T,\lambda}}(s,t)= \beta^{L_0,L_2}_{r,T,\lambda}(s)H^{L_0,L_2}_t$$ for some $\beta^{L_0,L_2}_{r,T,\lambda}\in \mathcal{F}$.
\end{enumerate}
    \item[\textbf{Case 5}] Assume now that $\vec{L_\textnormal{red}}=(\overline{L_0},\overline{L_1}, \overline{L_2})$ and $m_0^e(\vec{L})>0$. A choice of $(\varepsilon,\delta)$-perturbation for $\vec{L}$ is a choice $\left((f^{\vec{L}},g^{\vec{L}}),(K^{\vec{L}},J^{\vec{L}})\right)$ of perturbation datum for $\vec{L}$ such that for any $(r,T,\lambda)\in \mathcal{R}^{d+1}_C(\vec{L})$ and for the unique vertex $v\in V(T_\textnormal{red})$ of $T_\textnormal{red}$ we have that: \begin{enumerate}
    \item $K^{r,T,\lambda}_v$ vanishes away from the strip-like ends $\overline{\epsilon_i}$ for $i\in \{0,1,2\}$;
    \item On the $i$th ($i=0,1,2$) negative strip-like $\overline{\epsilon_i}$ end we have $$K^{r,T,\lambda}_v=\overline{H^{L_{i-1},L_i}_{r,T,\lambda}}dt$$ where $$\overline{H^{L_{i-1},L_i}_{r,T,\lambda}}: (-\infty,0]\times [0,1]\times M \longrightarrow \R$$ is of the form $$\overline{H^{L_{i-1},L_i}_{r,T,\lambda}}(s,t)= (1-\beta^{L_{i-1},L_i}_{r,T,\lambda}(s+1))H^{L_{i-1},L_i}_t$$ for some $\beta^{L_{i-1},L_i}_{r,T,\lambda}\in \mathcal{F}$. \end{enumerate}
    
\end{itemize}

     \begin{center}
     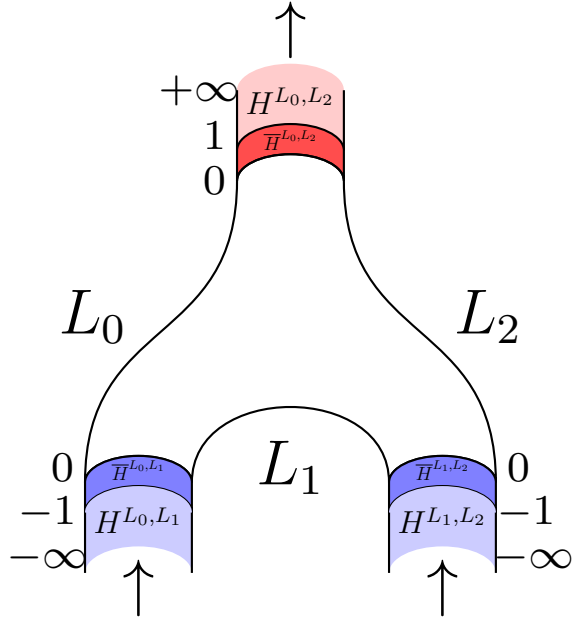
\begin{figure}
         \centering
 \scalebox{2}{
        \begin{tikzpicture}
     \pic[tqft,
     incoming boundary components=1,
     outgoing boundary components=1,
     cobordism edge/.style={draw},
     incoming boundary components/.style={draw},
     fill=red!20,
     name=c,
     cobordism height=0.4cm,
     ];
    \pic[tqft,
     incoming boundary components=1,
     outgoing boundary components=1,
     fill=red!70,
     name=b,
     draw,
     cobordism height=0.2cm, anchor=incoming boundary, at=(c-outgoing boundary)
         ];
         
    \pic[tqft/pair of pants,name=a, anchor=incoming boundary, at=(b-outgoing boundary), draw,];
    
    \pic[tqft,
     incoming boundary components=1,
     outgoing boundary components=1,
     fill=blue!50,
     name=d1,
     draw,
     cobordism height=0.2cm, anchor=incoming boundary, at=(a-outgoing boundary 1)
              ];
      \pic[tqft, 
     incoming boundary components=1,
     outgoing boundary components=1,
     fill=blue!50,
     name=d2,
     draw,
     cobordism height=0.2cm, anchor=incoming boundary, at=(a-outgoing boundary 2)
     
         ];
     \pic[tqft,
     incoming boundary components=1,
     outgoing boundary components=1,
     cobordism edge/.style={draw},
     fill=blue!20,
     name=e1,
     cobordism height=0.4cm, anchor=incoming boundary, at=(d1-outgoing boundary)
         ];
         
    \pic[tqft,
     incoming boundary components=1,
     outgoing boundary components=1,
     cobordism edge/.style={draw},
     fill=blue!20,
     name=e2,
     cobordism height=0.4cm, anchor=incoming boundary, at=(d2-outgoing boundary),
         ];
        
    \draw node at (-1.3,-1.5){$L_0$};
    \draw node at (1.3,-1.5){$L_2$};
    \draw node at (0,-2.5){$L_1$};
    \draw node at (-0.6,0){\tiny $+\infty$};
    \draw node at (-0.5,-0.3){\tiny $1$};
    \draw node at (0,0.4){ $\uparrow$};
    \draw node at (-1,-3.3){ $\uparrow$};
    \draw node at (1,-3.3){ $\uparrow$};
    \draw node at (1,-2.85){\scalebox{.5}{$H^{L_1,L_2}$}};
    \draw node at (-1,-2.85){\scalebox{.5}{$H^{L_0,L_1}$}};
    \draw node at (-0.5,-0.6){\tiny $0$};
    \draw node at (0,-0.07){\scalebox{.5}{$H^{L_0,L_2}$}};
    \draw node at (0,-0.33){\scalebox{.3}{$\overline{H}^{L_0,L_2}$}};
    \draw node at (1,-2.52){\scalebox{.3}{$\overline{H}^{L_1,L_2}$}};
    \draw node at (-1,-2.52){\scalebox{.3}{$\overline{H}^{L_0,L_1}$}};
    %\draw node at (0,-1.5) {};
    \draw node at (-1.5,-2.5){\tiny $0$};
    \draw node at (-1.6,-2.8){\tiny $-1$};
    \draw node at (-1.6,-3.1){\tiny $-\infty$};
    \draw node at (1.5,-2.5){\tiny $0$};
    \draw node at (1.55,-2.8){\tiny $-1$};
    \draw node at (1.6,-3.1){\tiny $-\infty$};

        \end{tikzpicture}}

         \caption{\textbf{Case 4}: A schematic representation of the Hamiltonian part a choice of $(\varepsilon,\delta)$-perturbation datum on a punctured disk in $\mathcal{R}^3_C(\vec{L})$ for a triple $\vec{L}=(L_0,L_1,L_2)$ made of different Lagrangians. White corresponds to vanishing Hamiltonian perturbation.}
         \label{fig:my_label}
     \end{figure}
   
    \end{center}
%Let now $L_0,L_1,L_2\in\lagmd(M,\omega)$ be three pairwise distinct Lagrangians in $M$ and write $\vec{L}=(L_0,L_1,L_2)$. We recall that $\mathcal{R}^3_C= \mathcal{R}^3$, the space of (clusters of) configurations of three ordered points on the circle up to automorphism, is a singleton.  Let $S$ denote the disk with three punctures and fix a choice of standard strip-like ends on $S$ (as in Section \ref{mbfuk}, we will work up to identification of strip-like ends with semi-strips). An $(\varepsilon,\delta)$-perturbation data for $\vec{L}$ is a perturbation datum $\mathcal{D}^{\vec{L}}= (K^{\vec{L}}, J^{\vec{L}})$ as defined in Section \ref{mbfuk} associated to a choice $(\varepsilon,\delta)$-Floer data on each couple $(L_{i},L_{i+1})$, $i=0,1,2$, such that \begin{enumerate} \item $K^{\vec{L}}$ vanishes away from strip-like ends;    \item On the $i$th ($i=1,2$) negative strip-like end we have $$K^{\vec{L}}=\overline{H^{L_{i-1},L_i}_{s,t}}dt$$ where $$\overline{H^{L_{i-1},L_i}}: (-\infty,0]\times [0,1]\times M \longrightarrow \R$$ is of the form $$\overline{H^{L_{i-1},L_i}_{s,t}}= (1-\beta^{L_{i-1},L_i}(s+1))H^{L_{i-1},L_i}_t$$ for some $\beta^{L_{i-1},L_i}\in \mathcal{F}$;    \item On the unique positive strip-like end we have $$K^{\vec{L}}=\overline{H^{L_0,L_2}_{s,t}}dt$$ where $$\overline{H^{L_0,L_2}}: [0,+\infty]\times [0,1]\times M \longrightarrow \R$$ is of the form $$\overline{H^{L_0,L_2}_{s,t}}= \beta^{L_0,L_2}(s)H^{L_0,L_2}_t$$ for some $\beta^{L_0,L_2}\in \mathcal{F}$.\end{enumerate}

Before defining $(\varepsilon,\delta)$-perturbation data for tuples of Lagrangians in $\lagmd(M,\omega)$ with reduced tuple of length greater than $4$, we stop for a moment to explain why the above construction is useful for the filtration point of view. Until the end of this subsection we assume transversality of all Floer and perturbation data. The map $\mu_d$ associated with tuples of Lagrangians as in \textbf{Case 1} and \textbf{Case 2} above preserves action filtrations, as discussed in Section \ref{wfs}. Let $\vec{L}$ be a tuple of Lagrangians in $\lagmd(M,\omega)$ as in \textbf{Case 4} above. As it will be apparent from the following computations, this is the most delicate case of the three remaining ones, as it is the only one with a positive contribution to the curvature term coming from the (unique) positive strip-like end. For simplicity, we will assume that $d^R=d=3$; the treatment of the general case is similar to this one, the only complication is a pure formalism: the presence of quantum-Morse trees, which do not interfere with curvature terms of Floer polygons. \\ Let $(K^{\vec{L}}, J^{\vec{L}})$ be a choice of $(\varepsilon,\delta)$-perturbation data for $\vec{L}$. We show that the associated map $$\mu_2: CF(L_0,L_1)\otimes CF(L_1,L_2)\to CF(L_0,L_2)$$ preserves filtrations. Consider orbits $\gamma_1\in \mathcal{O}(H^{L_0,L_1})$, $\gamma_2\in \mathcal{O}(H^{L_1,L_2})$ and $\gamma^+\in \mathcal{O}(H^{L_0,L_2})$ and a class $A\in \pi_2(M,\vec{L})$, and assume there is a Floer polygon with respect to the above chosen $(\varepsilon,\delta)$-perturbation data connecting $\gamma_1^-$ and $\gamma_2^-$ to $\gamma^+$ in the class $A$. Note that $\mathcal{R}^{3}_C(\vec{L})=\mathcal{R}^{3}(\vec{L})$ is a singleton (made of a $3$-punctured disk with no marked points), say $\{pt\}$ we write $K^{\vec{L}}= K^{pt}_v$ and omit $pt\in \mathcal{R}^{3}(\vec{L})$ from the notation from now on. We estimate the cuvature term $\int_SR^{K^{\vec{L}}}\circ u$ of $K^{\vec{L}}$ on $u$. For any $i\in\{0,1,2\}$ we denote by $\int_iR^{K^{\vec{L}}}\circ u$ the integral over the strip-like end $\overline{\epsilon_i}$ of $S:=S_{pt}$. By definition of $(\varepsilon,\delta)$-perturbation data, we have the equality $$\int_S R^{K^{\vec{L}}}\circ u = \sum_{i=0}^3 \int_iR^{K^{\vec{L}}}\circ u$$ Consider $i\in \{1,2\}$ first, then with respect to the coordinates on the respective strip-like ends (which are the negative one) we have

\begin{align*} \int_iR^{K^{\vec{L}}}\circ u &= \int_0^1\int_{-\infty}^0 R^{K^{\vec{L}}}\circ u = \int_0^1\int_{-1}^0 R^{K^{\vec{L}}}\circ u = \int_0^1\int_{-1}^0 \partial_s \overline{H^{L_0,L_1}_{s,t}}\circ u\ dsdt \\ & = -\int_0^1\int_0^1 \partial_s\beta^{L_0,L_1}H^{L_0,L_1}_t\circ u \ ds dt\leq -\int_0^1\int_0^1\partial_s\beta^{L_0,L_1}\ ds \ \min_MH^{L_0,L_1}_t\ dt\\ & < -\delta\varepsilon     \end{align*}

\noindent On the other hand, for $i=0$, i.e. the unique positive end, we have
\begin{align*} \int_0 R^{K^{\vec{L}}}\circ u &= 
        \int_0^1\int_0^{+\infty} R^{K^{\vec{L}}}\circ u = \int_0^1\int_{0}^1 R^{K^{\vec{L}}}\circ u = \int_0^1\int_{0}^1 \partial_s \overline{H^{L_0,L_2}_{s,t}}\circ u\ dsdt \\ & = \int_0^1\int_{0}^1 \partial_s\beta^{L_0,L_2}H^{L_0,L_2}_t\circ u \ ds dt \leq 
        \int_0^1\int_0^1\partial_s\beta^{L_0,L_2}\ ds \ \max_MH^{L_0,L_2}_t\ dt\\ & < \varepsilon
    \end{align*}
Hence, we conclude $$\int_SR^{K^{\vec{L}}}\circ u<\varepsilon(1-2\delta)<0$$ as $\delta>\frac{1}{2}$ by assumption. By the computations carried out in Section \ref{wfs} and the definition of the filtrations on Floer complexes via the action functional we conclude that $\mu_2$ is a filtration preserving map. As explained above, this implies that the $\mu_d$-maps defined for tuples of Lagrangians as in \textbf{Case 4} via $(\varepsilon,\delta)$-perturbation data are filtered (if we assume transversality). Moreover, similar (but easier) computations to the ones presented above show that the $\mu_d$-maps for tuples of Lagrangians as in \textbf{Case 3} and \textbf{Case 5} defined via $(\varepsilon,\delta)$-perturbation data are filtered too, because Floer polygons for such tuples only have entries and no exit (the exit will be a Morse flowline).

\subsection{$\varepsilon$-Perturbation data for $d^R>2$}\label{epspd3}
Let $\varepsilon>0$, $\delta\in \left(\frac{1}{2},1\right)$ and $d^R>2$. We assume that $(\varepsilon,\delta)$-perturbation data have been defined for any tuple of Lagrangians in $\lagmd(M,\omega)$ such that its reduced tuples has length $<d^R+1$. Let $\vec{L}=(L_0,\ldots,L_d)$ be a tuple of Lagrangians in $\lagmd(M,\omega)$ such that its reduced tuple $\vec{L_\textnormal{red}}$ has length $d^R+1$. Near the boundary $\partial \mathcal{R}^{d+1}_C(\vec{L})$ of $\overline{\mathcal{R}^{d+1}_C(\vec{L})}$, more precisely on an union of neighbourhoods of boundary strata where the associated gluing maps are diffeomorphisms onto their image, we define $(\varepsilon,\delta)$-perturbation data to be images of lower order $(\varepsilon,\delta)$-perturbation data for different tuples of Lagrangians under the gluing maps. In particular, near vertices of $\overline{\mathcal{R}^{d+1}_C(\vec{L})}$ $(\varepsilon,\delta)$-perturbation data are obtained by gluing of `fundamental' $(\varepsilon,\delta)$-perturbation data, i.e. those we explicitly defined in Section \ref{d^R=2} for strips and $3$-punctured disks with marked points. Note that consistency of strip-like ends and of system of ends implies that this construction is well-defined. Before extending the definition to the whole $\mathcal{R}^{d+1}_C(\vec{L})$ we make the following (obvious) remark, which is however fundamental from the point of view of our aim to achieve a filtered Fukaya category.

\begin{rem}\label{thinparts1}
 A first trivial observation is that Floer clusters defined on clusters lying near vertices of $\partial \mathcal{R}^{d+1}_C(\vec{L})$ are the most problematic from the filtration point of view, as they are the ones with most inherited positive strip-like ends on thin parts (which contribute positively to the total curvature term). The further away we go from vertices, the less positive contributions to the total curvature term a Floer cluster will inherit. In this remark, we want to show that, although being the most problematic case, cluster near vertices have negative curvature term. Assume first $L_0\neq L_d$. For simplicity we will assume that $\vec{L}$ is cyclically different, but the following generalizes to any tuple with $L_0\neq L_d$, as Morse flowlines and holomorphic disks do not contribute to curvature terms. Let $T\in \mathcal{T}_U^{d+1}(\vec{L})$ such that $|V(T_\textnormal{red})|=1$ (note that in this case $T_\textnormal{red}$ has $d^R+1=d+1$ exterior edges, among which exactly one is outgoing), and assume that $(r,T,\lambda)\in \mathcal{R}^{d+1}_C(\vec{L})$ lies near $\partial \mathcal{R}^{d+1}_C(\vec{L})$ in the sense above. Then $S_{r,T,\lambda}$ has at most $2d-1$ thin parts ($d+1$ `exterior' ones plus $d-2$ `interior' ones, one for each codimension of $\mathcal{R}^{d+1}_C(\vec{L})$). Indeed, $S_{r,T,\lambda}$ has $2d-1$ thin parts exactly when $(r,T,\lambda)$ lies near a vertex of $\overline{\mathcal{R}^{d+1}_C(\vec{L})}$. In this case (assuming transversality), a Floer polygon defined on $S_{r,T,\lambda}$ endowed with the above defined $(\varepsilon,\delta)$-perturbation data carries a curvature term strictly bounded above by $$\varepsilon-d\delta\varepsilon+(d-2)\left(\varepsilon-\delta\varepsilon\right)= (d-1)\varepsilon(1-2\delta)<0$$ as $\delta>\frac{1}{2}$ (the $\varepsilon-\delta\varepsilon$ term coming from thin parts in the interior). Note that the last term goes to $0$ as $\delta\to \frac{1}{2}$.\\
    %Notice that for any $T\in \mathcal{T}^{d+1}_U(\vec{L})$, any configuration $(r,T,\lambda)\in \mathcal{R}^{d+1}_C(\vec{L})$ lying near $\partial \mathcal{R}^{d+1}_C(\vec{L})$ contains $|V(T_\textnormal{red})|$-many punctured disks whose sum of thin parts is smaller than $2d-1$, so that the above computation still holds.\\
    Assume now $L_0=L_d$. For simplicity we will assume that $\vec{L}$ is almost cyclically different, i.e. $L_i\neq L_{i+1}$ for any $i\in \{0,\ldots, d-1\}$, but the following generalizes to any tuple with $L_0= L_d$, as Morse flowlines and holomorphic disks do not contribute to curvature terms. As evident from the discussion in Section \ref{d^R=2}, this case is less problematic from a filtration point of view. Let $T\in \mathcal{T}_U^{d+1}(\vec{L})$ such that $|V(T_\textnormal{red})|=1$ (note that in this case $T_\textnormal{red}$ has $d^R=d$ exterior edges, all of which are oriented towards the only vertex of $T_\textnormal{red}$), and assume that $(r,T,\lambda)\in \mathcal{R}^{d+1}_C(\vec{L})$ lies near $\partial \mathcal{R}^{d+1}_C(\vec{L})$. Then $S_{r,T,\lambda}$ has at most $2(d-1)-1=2d-3$ thin parts. Indeed, $S_{r,T,\lambda}$ has $2d-3$ thin parts exactly when $(r,T,\lambda)$ lies near a vertex of $\overline{\mathcal{R}^{d+1}_C(\vec{L})}$. In this case (assuming transversality), a Floer polygon defined on $S_{r,T,\lambda}$ endowed with the above defined $(\varepsilon,\delta)$-perturbation data carries a curvature term bounded above by $$-d\delta\varepsilon+(d-3)\left(\varepsilon-\delta\varepsilon\right)= (d-3)\varepsilon(1-2\delta)-3\delta\varepsilon<0$$ Note that the last term goes to $-\frac{3}{2}\varepsilon$ as $\delta\to \frac{1}{2}$. We conclude that all Floer clusters resulting from gluing carry negative curvature terms.
\end{rem}

\noindent In view of the above remark, after having defined $(\varepsilon,\delta)$-perturbation data near the boundary of $\mathcal{R}^{d+1}_C(\vec{L})$, we interpolate on the whole $\mathcal{R}^{d+1}_C(\vec{L})$ while keeping the requirement that the total curvature term is non-positive.\\

Summing things up, we just defined a family $E^{\varepsilon,\delta}\subset E$ of consistent perturbation data for any positive real $\varepsilon>0$ and any $\delta\in \left(\frac{1}{2},1\right)$. Indeed, notice that the inductive definition of $(\varepsilon,\delta)$-perturbation data implies that we get consistency for free. Moreover, our discussions above imply that, assuming transversality, for any $\varepsilon>0$, any $\delta\in \left(\frac{1}{2},1\right)$ and any $p\in E^{\varepsilon,\delta}$ the associated Fukaya category $\mathcal{F}uk(M;p)$ is a strictly unital and filtered $A_\infty$-category. We set $$E^\varepsilon:=\bigcup_{\frac{1}{2}<\delta<1} E^{\varepsilon,\delta}$$ and refer to elements of $E^\varepsilon$ as $\varepsilon$-perturbation data. In the remaining of the section we discuss transversality of our perturbation data, which is the last ingredient missing in the proof of Theorem \ref{mainthm}. We recall that a perturbation datum $p\in E$ is regular if all possible Floer clusters of index $0$ and $1$ defined via $p$ are regular in the sense of Definition \ref{defsimple}. Regularity of quantum trees is generic for $(\varepsilon,\delta)$-perturbation data, exactly as explained in the general case in Section \ref{transversality for floer clusters}. It only remains to deal with regularity of Floer polygons defined via $(\varepsilon,\delta)$-perturbation data. \\
Let $d\geq 2$, $\vec{L}=(L_0,\ldots, L_d)$ be a tuple of Lagrangians in $\lagmd(M,\omega)$ and $(K, J)$ the Floer part of $p$ restricted to $\mathcal{R}^{d+1}_C(\vec{L})$. We generalize \cite[Chapter 9k]{Seidel2008FukayaTheory} and define an admissible deformation of $(K, J)$ as a couple $$(\Delta K, \Delta J)\in \left(\Omega^1(S_{r,T,\lambda}(v), C^\infty(M))\times C^\infty(S_{r,T,\lambda}(v), T_J\mathcal{J})\right)_{v\in V(T_\textnormal{red}), (r,T,\lambda)\in \mathcal{R}^{d+1}_C(\vec{L})}$$ (where $\mathcal{J}$ is the space of $\omega$-compatible almost complex structures on $M$) smooth in the $\mathcal{R}^{d+1}_C(\vec{L})$ direction and such that $(\Delta K^{r,T,\lambda}_v, J^{r,T,\lambda}_v)$ (that is, the couple $(\Delta K, \Delta J)$ restricted to the polygon corresponing to the vertex $v\in V(T_\textnormal{red})$ on the cluster $S_{r,T,\lambda}$) is supported on the thick parts of the polygons $S_{r,T,\lambda}(v)$ and $\Delta K^{r,T,\lambda}_v$ vanishes along vectors tangent to the boundary of $S_{r,T,\lambda}(v)$. The deformation of $(K, J)$ via $(\Delta K, \Delta J)$ is defined on $S_{r,T,\lambda}(v)$, for $(r,T,\lambda)\in \mathcal{R}^{d+1}_C(\vec{L})$ and $v\in V(T_\textnormal{red})$ as $$\left(K^{r,T,\lambda}_v+ \Delta K^{r,T,\lambda}_v, J^{r,T,\lambda}_v\exp(-J^{r,T,\lambda}_v\Delta J^{r,T,\lambda}_v)\right).$$ This way we defined the concept of admissible Hamiltonian deformation for the elements of $E$. Note that the requirement that $(\Delta K, \Delta J)$ is supported on thick parts of polygons is fundamental in order to keep consistency for the deformed Hamiltonian perturbation datum.

 \begin{defn}\label{transversalityepsilon}
        We define $E^{\varepsilon,\delta}_\textnormal{reg}\subset E_\textnormal{reg}$ to be the space of regular perturbation data $p\in E_\textnormal{reg}$ such that the Floer parts of $p$ are obtained via an admissible deformation of the Floer parts of some $(\varepsilon,\delta)$-perturbation datum $q\in E^{\varepsilon,\delta}$ and such that the associated (well-defined) Fukaya category $\mathcal{F}uk(M;p)$ is filtered. Moreover, we define $$E^\varepsilon_\textnormal{reg}:=\bigcup_{\frac{1}{2}<\delta<1} E^{\varepsilon,\delta}_\textnormal{reg}$$ and refer to its elements as regular $\varepsilon$-perturbation data.
\end{defn}

\noindent To conclude the proof of Theorem \ref{mainthm} we show that for any $\varepsilon>0$ the space $E^\varepsilon_\textnormal{reg}$ is non-empty. Let $q\in E^{\varepsilon,\delta}$ for some $\varepsilon>0$ and $\delta\in \left(\frac{1}{2},1\right)$. Let $d\geq 2$, $\vec{L}=(L_0,\ldots, L_d)$ be a tuple of Lagrangians in $\lagmd(M,\omega)$ and $(K, J)$ the Floer part of $p$ restricted to $\mathcal{R}^{d+1}_C(\vec{L})$. As proved\footnote{To see that Seidel's proof keep on working in the cluster setting it is enough to apply it to any vertex of $T_\textnormal{red}$.} in \cite[Chapter 9k]{Seidel2008FukayaTheory}, a generic admissible deformation $(\Delta K, \Delta J)$ of $(K, J)$ turns $(K, J)$ into a regular Hamiltonian perturbation datum on $\mathcal{R}^{d+1}_C(\vec{L})$. Thus we can choose a generic Hamiltonian $1$-form $\Delta K$ supported on the thick parts of punctured disks such that the associated function $$(r,T,\lambda)\in \mathcal{R}^{d+1}_C(\vec{L})\longmapsto \sum_{v\in T_{\textnormal{red}}}\int_{S_{r,T,\lambda}(v)}\max_{x\in M}R^{\Delta K}_{r,T,\lambda}(v)$$ is bounded above by $$(d-1)\varepsilon(2\delta-1)$$ implying that the deformed Hamiltonian $K+\Delta K$ has a negative curvature term overall, as the supports of $K$ and $\Delta K$ are disjoint. In particular, the map $\mu_d$ defined via $K+\Delta K$ is filtered. The case $L_0=L_d$ is analogous.\\
This concludes the proof of Theorem \ref{mainthm}

\begin{rem}
\begin{enumerate}
\item The above recipe for transversality allows us to define $\mu_d$-maps shifting filtration by $\leq 0$. By restricting the possible choice of deformations to the $\Delta K$ with arbitrary small curvature, say with curvature function bounded by a small number $\eta>0$, we obtain for any $d\geq 2$ that the map $\mu_d$ shifts filtration by $$\leq (d-1)\varepsilon(1-2\delta)+\eta.$$ This control over the negative shift might be useful in some situations.
\item Note that in general $E^{\varepsilon,\delta}_\textnormal{reg}\not\subset E^{\varepsilon,\delta}$. However, we conjecture that it is possible to modify the above transversality argument to define a residual subset $\tilde{E}^{\varepsilon,\delta}_\textnormal{reg}\subset E^{\varepsilon,\delta}$ of regular $(\varepsilon,\delta)$-perturbation data. The idea is to fix the Floer part of the perturbation data and, via an extension of the arguments contained in \cite{Mcduff2012J-holomorphicTopology}, to show that there are adequate deformations of the almost complex structures turning $(\varepsilon,\delta)$-perturbation data into regular ones.  Moreover, in that case we could define $(\varepsilon,\delta)$-perturbation data for $\delta=\frac{1}{2}$ too.
\end{enumerate}
\end{rem}

\subsection{Observation: the case of the unit in the standard model of Fukaya categories}\label{addend} In this subsection we explain the main reason for working with a hybrid Floer-Morse (or `cluster') model for the Fukaya category, in order to obtain a genuinely filtered $A_\infty$-category and not only a weakly-filtered one, as in \cite{Biran2021LagrangianCategories}.

We recall that in the standard model of the Fukaya category (that is, the one presented in \cite{Seidel2008FukayaTheory}, see e.g. \cite{Biran2014LagrangianCategories} for a monotone version) \textit{all} Floer chain complexes are defined via Hamiltonian perturbations (also the ones associated to couple of identical Lagrangians) and the $\mu_d$-maps are defined by counting Floer polygons joining Hamiltonian orbits (and no Morse trees). Note that the construction of the space $E^\varepsilon$ of $\varepsilon$-perturbation data generalizes without much effort to this standard model and it is easy to see that the associated maps $\mu_d$ are filtered in this case too. However, one of the main differences between the hybrid Floer-Morse model defined in this paper and the standard one is that the definition of the (representatives of) the units in the latter involves counting homolomorphic disks with Hamiltonian perturbations, and this may a priori lead to curvature terms taking positive values and hence to representatives of units that may lie at positive filtration levels (see \cite[Proposition 3.1(ii)]{Biran2021LagrangianCategories}). Indeed, in the following we sketch a proof of the fact that one \textit{cannot }define representatives of the units lying at vanishing filtration level using $\varepsilon$-perturbation data on the standard model of $\mathcal{F}uk(M)$. The same proof can be expanded to show that it is not possible to have a filtered Fukaya category via the standard model: one has to give up either filtering the maps $\mu_d$ or having units at vanishing filtration levels. 
\begin{rem} 
Not having filtered units may seem a marginal fact compared to not having filtered $\mu_d$-maps. However, upcoming work will show why filtration-zero units are desiderable, leading to a `minimal energy' Fukaya category and interesting results. % {\color{red} for instance, one of the most direct corollaries is that weakly-exact Fukaya categories cannot be split-generated by a single object (contrary to the more general monotone case, see for instance $M=\mathbb C P^1$)}.
Moreover, for the derived Fukaya category of a monotone symplectic manifold to fit the definition of a triangulated persistence category (TPC) (see the recent work \cite{Biran2023TriangulationCategories}), vanishing filtration levels for units are crucial.
\end{rem}
We briefly recall how representatives of the units are constructed in \cite{Seidel2008FukayaTheory}. Let $\varepsilon>0$, $\delta\in \left(\frac{1}{2},1\right)$ and $L\in \lagmd(M,\omega)$ be a monotone Lagrangian. Choose a Floer datum $(H^{L,L},J^{L,L})$ for $(L,L)$ in the sense of Seidel, i.e. a time-dependent Hamiltonian $H^{L,L}$ on $M$ such that $\varphi_H^1(L)\pitchfork L$, where $\varphi_H^t$ denotes the Hamiltonian flow of $H$, and an $\omega$-compatible time dependent almost complex structure $J^{L,L}$ on $M$. We assume that $(H^{L,L},J^{L,L})$ fits the definition of a $(\varepsilon,\delta)$-Floer datum, i.e. $\textnormal{image}(H^{L,L})\subset \left(\delta\varepsilon, \varepsilon\right)$. We define $$CF^S(L,L):=\bigoplus_{\gamma\in \mathcal{O}(H^{L,L})}\Lambda \cdot \gamma$$ where the superscript $S$ stands for `standard', $\mathcal{O}(H^{L,L})$ denotes the set of Hamiltonian orbits of $H^{L,L}$ and $\Lambda$ the Novikov field over $\Z_2$. We further assume that $(H^{L,L},J^{L,L})$ is regular, i.e. $CF^S(L,L)$ is a chain complex when endowed with the standard differential counting Floer strips.
Consider the standard unit disk $D\subset \mathbb C$, define $D_1:=D\setminus\{1\}$ and pick a positive strip-like end (see page \pageref{striplikends}) on $D_1$ near the point $1\in D$. We define $\varepsilon$-perturbation data for $L$ on $D_1$ analogously to $\varepsilon$-perturbation data for $(d+1)$-punctured disks where $d\geq 2$ as introduced in Section \ref{d^R=2} and \ref{epspd3}. The notion of curvature term is also defined analogously. Let $(K^L,J^L)$ be such a perturbation datum. Let $\gamma\in \mathcal{O}(H^{L,L})$ and $A\in \pi_2(M,L)$ such that $\mu(\gamma;A)=0$ and consider the space $\mathcal{M}(\gamma;A)$ of $(K^L,J^L)$-perturbed Floer $1$-gons $u:D_1\to M$ such that on the unique strip-like end of $D_1$ we have $$\lim_{s\rightarrow \infty}u(s,t)=\gamma(t).$$ We assume regularity of the perturbation datum $(K^L,J^L)$, so that $\mathcal{M}(\gamma;A)$ is a smooth manifold of dimension $0$ for any orbit $\gamma\in \mathcal{O}(H^{L,L})$ and any class $A\in \pi_2(M,L)$ as above. Standard Gromov-compactness arguments show that in such cases $\mathcal{M}(\gamma;A)$ is compact. We define $$e_L:=\sum_{\gamma,A}\sum_u T^{\omega(u)}\gamma\in CF^S(L,L)$$ where the first sum runs over classes $\gamma\in \mathcal{O}(H^{L,L})$ and classes $A\in \pi_2(M,L)$ such that $\mu(\gamma;A)=0$ and the second sum runs over Floer $1$-gons $u\in \mathcal{M}(\gamma;A)$. It is well-known that $e_L$ is a representative of the homological unit of $L$ in the standard model of $\mathcal{F}uk(M)$ (see \cite[Section 8d]{Seidel2008FukayaTheory}).

In the remaining of this section we sketch a proof of the fact that $e_L$ lies at a positive filtration level in $CF^S(L,L)$, that is $\mathbb{A}(e_L)>0$. Assume by contradiction that that $\mathbb{A}(e_L)\leq 0$. Then by definition of the action functional $\mathbb A$ (see page \pageref{actionfunctionaldefinition}) we have $$C:=\max_{u\in \mathcal{M}_0(\gamma)}\int_{D_1}R^{K^L}\circ u\leq 0$$ where $R^{K^L}$ denotes the curvature two form (introduced on page \pageref{curvatureform}) associated to the Hamiltonian perturbation datum $K^L$.
Let now $\tilde{H}^{L,L}$ be another Hamiltonian on $M$ such that $\varphi_{\tilde{H}^{L,L}}(L)\pitchfork L$ and assume that $0<\tilde{H}<H$. It is well known that (assuming regularity) the Floer homology defined using $(H^{L,L},J^{L,L})$ is quasi-isomorphic as a chain complex to the one defined using $(\tilde{H}^{L,L},J^{L,L})$. However, those two homologies are \textit{not} isomorphic as persistence modules, as the filtrations at the chain level may differ dramatically in general. However we sketch a proof of the fact that given our assumption on $\mathbb{A}(e_L)$ we can construct an isomorphism of persistence modules, leading to a contradiction. \\
It is easy to see that there is a filtration preserving chain map $CF(L,L,H)\to CF(L,L,\tilde{H})$ (e.g. by choosing a monotone homotopy from $H$ to $\tilde{H}$, see \cite{Biran2001PropagationHomology}). We construct a filtration preserving map $$\psi: CF(L,L,\tilde{H})\to CF(L,L,H)$$ using our assumption on $\mathbb{A}(e_L)$. Pick $D_1$ and puncture it in $-1$ to get $D_{-1,1}\cong \mathbb{R}\times [0,1]$. Then, we can see $K^L$ as a one-form on $\mathbb{R}\times [0,1]$ such that $K^L=0$ (on a strip-like end) near $-\infty$. We define $\psi$ as the continuation map associated with the Hamiltonian perturbation defined as the concatenation of a monotone homotopy from $\tilde{H}$ to $0$ and $K^{L}$ (seen as a one-form on the strip). Standard methods show that $\psi$ is a chain map. Moreover, as $\tilde{H}$ is positive, the monotone homotopy has negative curvature term, and, as $K$ has non-positive curvature term, energies idendities similar to the ones in Section \ref{d^R=2} tell us that the map $\psi$ is a filtered chain map. Hence it follows that $HF(L,L,H)$ and $HF(L,L,\tilde{H})$ are isomorphic as persistence modules. This contradicts the assumption $\tilde{H}<H$ and hence proves that the representative $e_L\in CF^S(L,L)$ of the unit satisfies $\mathbb{A}(e_L)>0$, as claimed.

 \begin{figure}
         \centering
 \scalebox{1.8}{
        \begin{tikzpicture}
        [
    every tqft/.append style={
        tqft/boundary separation=3cm
      }
  ]
     \pic[tqft,
     incoming boundary components=1,
     outgoing boundary components=1,
     cobordism edge/.style={draw},
     incoming boundary components/.style={draw},
     fill=red!20,
     name=c,
     cobordism height=0.4cm,
     ];
    \pic[tqft,
     incoming boundary components=1,
     outgoing boundary components=1,
     fill=red!70,
     name=b,
     draw,
     cobordism height=0.2cm, anchor=incoming boundary, at=(c-outgoing boundary),boundary separation=2cm
         ];
         
    \pic[tqft/cup,name=a, anchor=incoming boundary, at=(b-outgoing boundary), draw,boundary separation=2cm];
  
    \draw node at (-0.65,0){\tiny $+\infty$};
    \draw node at (-0.5,-0.3){\tiny $1$};
    \draw node at (-0.5,-0.6){\tiny $0$};
    \draw node at (0,0.4){ $\uparrow$};
      \draw node at (0.5,-0.2){ \tiny $L$};
    \draw node at (0,-0.07){\scalebox{.5}{$H^{L,L}$}};
    \draw node at (0,-0.33){\scalebox{.3}{$\overline{H}^{L,L}$}};
        \end{tikzpicture}}

         \caption{\textbf{Case 4}: A schematic representation of a representative of the unit in $CF^S(L,L)$ when endowed with some choice of $\varepsilon$-perturbation data.}
         \label{fig:my_label}
     \end{figure}

     \newpage

\section{Continuation functors}\label{contfunc}

It is well known that between two Floer complexes of the same objects (ambient symplectic manifold or couple of Lagrangians) defined using different Floer data one can construct chain maps, called continuation maps, defined by counting strips which homotope between the different data. Moreover these chain maps are quasi-isomorphisms. In particular, this shows that Floer homology is well-defined in the sense that it is independent of the auxiliary Floer data. In \cite{Sylvan2019OnCategories}, Sylvan extended continuation maps to $A_\infty$-functors on (partially wrapped) Fukaya categories. In particular, he geometrically showed that Fukaya categories do not depend on choices up to quasi-equivalence of $A_\infty$-categories. This approach differs from the well-known proof of this fact contained in \cite[Chapter 10]{Seidel2008FukayaTheory}, which heavily relies on algebraic machineries. In this section we will construct continuation functors for monotone Fukaya categories defined using the Morse-Bott model developed in Section \ref{mbfuk} following the main ideas contained in Sylvain's work. In Section \ref{contfunandfiltr} we discuss how the filtered structure of filtered Fukaya categories defined using $\varepsilon$-perturbation data (as defined in Section \ref{actualconstruction}) behaves under continuation functors.
The main results of this section are summarized in the following theorem, which is an expanded version of Theorem \ref{thmB}.

\begin{thm}
    Consider two regular perturbation data $p,q\in E_\textnormal{reg}$, and assume that they share the same Morse perturbation data. Then there is a non-empty space $E^{p, q}$ of so-called interpolation data and a residual subset $E^{p, q}_\textnormal{reg}\subset E^{p, q}$ such that for any $h\in E_\textnormal{reg}^{p,q}$ there is a weakly-filtered $A_\infty$-functor $$\mathcal{F}^{p,q}_h\colon \mathcal{F}uk(M;p)\to \mathcal{F}uk(M;q)$$ called the continuation functor from $\mathcal{F}uk(M;p)$ to $\mathcal{F}uk(M;q)$ associated to $h$, which is a quasi-equivalence canonical up to quasi-isomorphism of $A_\infty$-functors. Moreover, if $p\in E^{\varepsilon_1,\delta_1}_\textnormal{reg}$ and $q\in E^{\varepsilon_2}_\textnormal{reg}$ for some $\varepsilon_1,\varepsilon_2>0$ are $\varepsilon$-perturbation data sharing the same Morse part, then there is a non-empty subset $E^{p,q; f}_\textnormal{reg}\subset E^{p, q}_\textnormal{reg}$ such that for any $h\in E^{p,q, f}_\textnormal{reg}$ the associated functor $\mathcal{F}^{p,q}_h$ shifts filtration by $\leq \varepsilon_2-\delta_1\varepsilon_2$, in the sense of the definition appearing at the end of Section \ref{ainfpre}. In particular if $\varepsilon_2\leq \delta_1\varepsilon_1$, then $\mathcal{F}^{p,q}_h$ is filtered for any choice of $h\in E^{p, q, f}_\textnormal{reg}$.
\end{thm}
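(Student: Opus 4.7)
The plan is to adapt Sylvan's construction of continuation $A_\infty$-functors to the cluster model of Section \ref{mbfuk}, while tracking filtration shifts via the same mechanism that governs the $\varepsilon$-perturbation data of Section \ref{actualconstruction}. The first step is to define the space $E^{p,q}$ of \emph{interpolation data}. For each tuple $\vec{L}=(L_0,\ldots,L_d)$ and each cluster configuration $(r,T,\lambda)\in\mathcal{R}^{d+1}_C(\vec{L})$, an element of $E^{p,q}$ prescribes, on every disc vertex $v\in V(T_\text{red})$, a Hamiltonian--almost-complex pair $(K^{r,T,\lambda}_v,J^{r,T,\lambda}_v)$ which agrees with the $p$-datum $(K^{p},J^{p})$ on a neighbourhood of every negative strip-like end $\overline{\epsilon_i}$ and with the $q$-datum $(K^{q},J^{q})$ on a neighbourhood of the positive end $\epsilon_0$, and which interpolates between the two in the complement. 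Since $p$ and $q$ share the same Morse part, nothing needs to be done on the quantum subtrees $T_i^F$; consistency with breaking is imposed by declaring that cutting off the sub-configuration containing $\epsilon_0$ yields a $q$-cluster while cutting off a sub-configuration containing only negative ends yields a $p$-cluster, with interpolation data on the remaining piece.

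Next I would introduce the moduli spaces $\mathcal{M}^{d+1}_\text{cont}(\vec{x}_0,\gamma_1,\ldots,\vec{x}_{d^R};y^+;A;h)$ of interpolation clusters with $p$-asymptotics at the incoming ends and a $q$-asymptotic $y^+$ at the outgoing end, and run the transversality scheme of Section \ref{transversality for floer clusters} vertex by vertex on $T_\text{red}$ to extract a residual subset $E^{p,q}_\text{reg}\subset E^{p,q}$ for which these moduli spaces are smooth manifolds of the expected dimension. The functor $\mathcal{F}^{p,q}_h$ is then defined as the identity on objects, with $(\mathcal{F}^{p,q}_h)_d$ given by the signed count of index $0$ interpolation clusters weighted by $T^{\omega(u)}$. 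The $A_\infty$-functor equations are read off from the codimension $1$ boundary strata of the index $1$ moduli spaces, which by the gluing picture above split into three families: breaking at a single incoming end (yielding $(\mathcal{F}^{p,q}_h)_{d-k+1}(\mathrm{id}\otimes\mu^{p}_k\otimes\mathrm{id})$), breaking at the outgoing end (yielding $\mu^{q}_k\circ((\mathcal{F}^{p,q}_h)_{i_1}\otimes\cdots\otimes(\mathcal{F}^{p,q}_h)_{i_k})$), and the usual cluster-type degenerations, all of which match the $A_\infty$-functor equation on the nose by the consistency conditions on $h$.

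To obtain the quasi-equivalence and its canonicity I would argue as follows: the linear term $(\mathcal{F}^{p,q}_h)_1$ restricted to each hom-complex reduces to the standard continuation chain map, which is a quasi-isomorphism by the classical concatenation-with-reverse-homotopy argument, so $\mathcal{F}^{p,q}_h$ is a quasi-equivalence; given two choices $h_0,h_1$ of interpolation data one joins them by a generic $1$-parameter family $(h_s)_{s\in[0,1]}\subset E^{p,q}$ and defines an $A_\infty$-prenatural transformation by counting parameterized interpolation clusters, thus producing the canonical homotopy class of continuation functors.

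Finally, for the filtration estimate, I would define $E^{p,q;f}_\text{reg}\subset E^{p,q}_\text{reg}$ to be the sub-family for which the bulk interpolation between the $p$- and $q$-Hamiltonian one-forms is performed, along each strip-like end of type $p$ by a monotone homotopy from $H^{L_{i-1},L_i}_p$ to $0$ supported on $[-1,0]\times[0,1]$ (contributing $<-\delta_1\varepsilon_1$ to the curvature, as in Section \ref{d^R=2}), along the outgoing end of type $q$ by a monotone homotopy from $0$ to $H^{L_0,L_d}_q$ (contributing $<+\varepsilon_2$), and in the complement by an admissible perturbation with curvature absolutely bounded by an arbitrary positive $\eta$ (achievable by the transversality scheme of Definition \ref{transversalityepsilon}). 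Vertex-wise summation as in the proof of Proposition \ref{wfilt}, combined with the energy identity $0\le E(u)=\omega(u)+\int_{S}R\circ u+\int_0^1 H^{p}\circ\vec{\gamma}_-\,dt-\int_0^1 H^{q}\circ\gamma_+\,dt$, then yields the discrepancy bound $\varepsilon_2-\delta_1\varepsilon_1+\eta$, and letting $\eta\to 0$ gives the stated estimate; in particular the functor is filtered whenever $\varepsilon_2\le \delta_1\varepsilon_1$. The main obstacle I anticipate is the simultaneous balancing, near the codimension $\ge 1$ strata of $\overline{\mathcal{R}^{d+1}_C(\vec{L})}$, of three requirements on $h$ that pull in different directions: consistency with gluing (which forces the thin-part behaviour), regularity (which forces genericity in the thick parts), and the curvature bound (which forces quantitative control over the interpolation in the bulk); resolving this is essentially a refinement, in the $p\to q$ setting, of the inductive construction of $\varepsilon$-perturbation data carried out in Sections \ref{d^R=2} and \ref{epspd3}.
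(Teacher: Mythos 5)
There is a genuine structural gap in your construction of the source spaces. You define the interpolation data over the ordinary cluster moduli spaces $\mathcal{R}^{d+1}_C(\vec{L})$ (of dimension $d-2$), with the interpolation from the $p$-data to the $q$-data happening ``in the bulk'' of each disc, and you then claim that the codimension-one boundary of the index-$1$ moduli spaces contains strata of the form $\mu^q_k\circ\bigl((\mathcal{F}^{p,q}_h)_{i_1}\otimes\cdots\otimes(\mathcal{F}^{p,q}_h)_{i_k}\bigr)$. For $k\geq 2$ this cannot work: such a degeneration consists of $k$ simultaneous breakings into continuation pieces attached to a single $q$-polygon, which is a codimension-$k$ stratum of the associahedron, not a codimension-one one. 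This is precisely why the paper (following Sylvan and Mau--Woodward) builds the functor over the \emph{multiplihedron}: the source spaces are moduli of \emph{stacked} clusters $\mathcal{R}^{d+1,2}_C(\vec{L})=\mathcal{R}^{d+1}_C(\vec{L})\times(0,\infty)$ of dimension $d-1$, the extra width parameter $w$ recording where the interpolation occurs, and the compactification is organized by colored trees in which all colored (interpolating) vertices are constrained to lie at the same distance from the root. It is exactly this constraint that puts the simultaneous multi-fold splitting $\mu^q_k(\mathcal{F}\otimes\cdots\otimes\mathcal{F})$ in codimension one, and it also forces the one-higher expected dimension $d_A+1$ for the moduli spaces defining $(\mathcal{F}^{p,q}_h)_d$ (your ``index $0$'' count is computed against the wrong virtual dimension). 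The paper moreover has to introduce an intrinsic width function to make the gluing near upper colored trees well defined (your ``consistency with breaking'' clause silently assumes this is unproblematic, but Example \ref{weneedintrinsic} shows the naive gluing is ambiguous), and to accept generalized corners in $\overline{\mathcal{R}^{d+1,2}(\vec{L})}$.

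Your treatment of the remaining ingredients is closer to the mark: the identification of $(\mathcal{F}^{p,q}_h)_1$ with the classical continuation map, the parametrized-moduli argument for independence of $h$ up to quasi-isomorphism, and the curvature bookkeeping for the filtration estimate (negative ends contributing $<-\delta_1\varepsilon_1$, the positive end $<\varepsilon_2$, plus an arbitrarily small generic deformation term) all match the paper's scheme, except that the worst-case stratum realizing the bound is $\mathcal{R}^{d+1}_C(\vec{L})\times\prod_{i=1}^d\mathcal{R}^{2,2}(L_{i-1},L_i)$, i.e.\ each of the $d$ inputs passing through its own continuation strip, which is what produces the $d$-linear discrepancy $d(\varepsilon_2-\delta_1\varepsilon_1)$ required by the definition of ``shifts filtration by $\rho$''. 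But none of this can be assembled into an $A_\infty$-functor until the source spaces are corrected to the multiplihedron model.
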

\noindent Note that we define continuation functors only between Fukaya categories defined using the same Morse part of the perturbation data. This assumption may be easily dropped by defining Morse interpolation data, whose construction we omit to avoid notational complexity. In any case, Morse interpolation data play no role from the filtration point of view.

%====================================
%====================================
%+===================================

\subsection{Colored trees}\label{colored trees}

We introduce the notion of colored leafed tree following \cite{Mau2010GeometricMultiplihedra}. Let $T$ be a $d$-leafed tree with no vertices of valency equal to $1$ and finitely many interior edges. Given a metric $\lambda\in \lambda(T)$ and a vertex $v\in V(T)$ we define the distance $\lambda(v)$ of $v$ from the root vertex $v_T$ as the sum of the length $\lambda(e)$ of the edges $e\in E^\textnormal{int}(T)$ in the geodesic from $v$ to $v_T$.\\
A coloring on $T$ is a choice of a subset $V^\col(T)\subset V(T)$ of vertices, which we call colored vertices, and of a metric $\lambda\in \lambda(T)$ such that: \begin{enumerate}
    \item for any $i\in \{1,\ldots, d\}$, if we flow along the geodesic from $e_i(T)$ to the root $e_0(T)$ we meet exactly one colored vertex;
    \item each vertex of valency equal to $2$ is colored;
    \item each colored vertex lies at the same distance from the root vertex $v_T$ of $T$.
\end{enumerate}
Two coloring are said to be equivalent if they carry the same set of colored vertices $V^\col(T)$ (and hence the metric is there only to make point (3) well-defined). A $d$-colored tree is a $d$-leafed tree $T$ as above together with an equivalence class of colorings, which we denote by only writing the subset of colored vertices. We denote by $\mathcal{T}^{d+1}_\col$ the space of $d$-colored trees. Note that the notation might be a bit confusing, as $\mathcal{T}^{d+1}_\col$ contains non-stable trees, whereas $\mathcal{T}^{d+1}$ contains only stable trees by definition (see Definition \ref{hugedegfortrees}(4)). Notice that there is a natural partition $$V(T)=V^l(T)\sqcup V^\col(T)\sqcup V^r(T)$$ on colored trees by setting $V^l(T)$ to be the set of vertices of $T$ lying at distance strictly smaller than vertices in $V^\col(T)$ from $v_T$ (this notion does not depend on the choice of metric) and $V^r(T)$ to be the set of vertices of $T$ lying at distance stritly larger than vertices in $V^\col(T)$ from $v_T$.\\ Let $V^\col(T)\subset V(T)$, then we denote by $\lambda(T,V^\col(T))$ the space of metrics $\lambda\in \lambda(T)$ such that $(V^\col(T), \lambda)$ is a coloring for $T$. The following lemma is proved in \cite{Mau2010GeometricMultiplihedra}.
\begin{lem}\label{polcone}
    For any choice of $V^\col(T)\subset V(T)$ the space $\lambda(T,V^\col(T))$ is a polyhedral set of dimension $|E^\textnormal{int}(T)|+1-|V^\col(T)|$.
\end{lem}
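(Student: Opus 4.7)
The plan is to identify $\lambda(T,V^\col(T))$ with the intersection of the closed positive cone $\lambda(T) \cong [0,\infty)^{|E^\textnormal{int}(T)|}$ with the affine subspace defined by condition (3), then count independent linear constraints. Conditions (1) and (2) in the definition of a coloring depend only on the subset $V^\col(T)$ and not on the metric, so once they hold they contribute no constraint on $\lambda$ (otherwise $\lambda(T,V^\col(T))$ is empty). Condition (3) requires that all colored vertices lie at the same distance from $v_T$; writing $\lambda(v) = \sum_{e \in P_v} \lambda(e)$ where $P_v$ denotes the sequence of interior edges in the geodesic from $v$ to $v_T$, this becomes the system $\lambda(v) = \lambda(v^*)$ for $v \in V^\col(T) \setminus \{v^*\}$, once a reference vertex $v^* \in V^\col(T)$ is fixed --- a total of $|V^\col(T)| - 1$ linear equations.

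The crux is showing these equations are linearly independent. The key observation I would exploit is that condition (1) forces $V^\col(T)$ to be an antichain in the tree: if two distinct colored vertices were comparable in the root-to-leaf partial order, a leaf below the deeper one would have its geodesic to $e_0(T)$ meet both, violating the uniqueness in (1). Consequently, for any $v \in V^\col(T)$ with $v \ne v_T$, the outgoing edge $f_0(v) \in E^\textnormal{int}(T)$ belongs to $P_v$ but to no $P_w$ for any other colored vertex $w$, because $w$ not being a descendant of $v$ prevents the path $P_w$ from traversing $f_0(v)$. This provides, for each of the $|V^\col(T)| - 1$ equations, a variable appearing in that equation and in no other, establishing full rank. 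In the degenerate case $v_T \in V^\col(T)$, condition (1) forces $V^\col(T) = \{v_T\}$, so there are no equations and the count is immediate.

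Combining, the affine subspace in $\mathbb{R}^{|E^\textnormal{int}(T)|}$ carved out by condition (3) has dimension $|E^\textnormal{int}(T)| - (|V^\col(T)| - 1) = |E^\textnormal{int}(T)| + 1 - |V^\col(T)|$, and its intersection with the non-negative orthant is a polyhedral set of at most this dimension. To confirm the dimension is exact, I would exhibit a relative interior point by starting from any strictly positive length function on $E^\textnormal{int}(T)$ and then adjusting the lengths $\lambda(f_0(v))$ for $v \in V^\col(T)\setminus\{v^*\}$ --- which are independent free variables by the rank argument above --- to enforce condition (3), rescaling first if needed to keep all entries positive. The main obstacle is the linear independence step, which is genuinely geometric rather than formal and hinges on extracting the antichain property from condition (1); once this combinatorial input is in hand, the remainder is a routine polyhedral dimension count.
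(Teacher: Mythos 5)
Your argument is correct. Note that the paper does not prove this lemma at all --- it simply cites Mau--Woodward \cite{Mau2010GeometricMultiplihedra} --- so your write-up supplies a self-contained proof of what the paper leaves to the literature, and it follows the standard polyhedral route. The two genuinely substantive points are both handled properly: first, conditions (1)--(2) constrain only the subset $V^\col(T)$ and not the metric, so the space is the intersection of the orthant $[0,\infty)^{|E^\textnormal{int}(T)|}$ with the affine subspace given by the $|V^\col(T)|-1$ equations $\lambda(v)=\lambda(v^*)$; second, the rank of this system is exactly $|V^\col(T)|-1$ because condition (1) forces $V^\col(T)$ to be an antichain (every vertex has a leaf below it, since all valencies are $\geq 2$ and there are finitely many interior edges), so for each colored $v\neq v_T$ the interior edge $f_0(v)$ lies on $P_v$ and on no other $P_w$ with $w$ colored, giving each equation a private variable. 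The exhibition of a strictly positive solution (enlarge $\lambda(f_0(v^*))$ so that $\lambda(v^*)$ dominates the partial sums $\sum_{e\in P_v\setminus\{f_0(v)\}}\lambda(e)$, then solve for each private variable) correctly pins down the dimension as $|E^\textnormal{int}(T)|+1-|V^\col(T)|$ rather than merely bounding it above, and the degenerate cases ($v_T$ colored, or $V^\col(T)$ violating (1)--(2) so that the space is empty) are correctly identified.
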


Let $d\geq 2$ and $\vec{L}=(L_0,\ldots,L_d)$ be a tuple of Lagrangians in $\lagmd(M,\omega)$. We define by $\mathcal{T}^{d+1}_\col(\vec{L})$ the set of colored trees labelled by $\vec{L}$ and by $\mathcal{T}^{d+1}_{U,\col}(\vec{L})$ the set of colored trees labelled by $\vec{L}$ which can be represented by an unilabelled tree.
\begin{rem}
    As said above, our trees will be defined only between Fukaya categories constructed via perturbation data which share the same Morse part. As it will be apparent from the discussion below, this simplifies the definition of system of ends for colored labelled trees. The definition of universal system of ends for $\vec{L}$ (see Section \ref{seconsystemofends} for the definition of system of ends), as well as that of consistency, readily translate to the case of colored labelled trees, as condition (3) in the definition implies that there are finitely many of those. We skip the details.
\end{rem}

%====================================
%====================================
%+===================================

\subsection{Stacked disks}\label{stackeddisks}

To define the source spaces for continuation functors, we will adopt the same strategy as in Section \ref{sourcespaces}: consider primitive source spaces consisting of some configuration of disks, compactify the moduli spaces of those and add a collar neighbourhood. The primitive source spaces in this case will be `stacked' disks, whose moduli spaces will simply be a stack of the same moduli spaces of marked disks indexed by a positive real parameter.\\
We start from the case $d=1$. Let $(L_0,L_1)$ be a couple of Lagrangians in $\lagmd(M,\omega)$. We define $\mathcal{R}^{2,2}(L_0,L_1) = \{pt\}$ to be a singleton and $\mathcal{S}^{2,2}(L_0,L_1) = \{S^{2,2}\}$, where $S^{2,2}$ is a strip if $L_0\neq L_1$ and a disk with marked points in $-1$ and $+1$ if $L_0=L_1$. In both cases we assume that $S^{2,2}$ is endowed with two numbers $\varepsilon_-<0$ and $\varepsilon_+>0$. We think of $\varepsilon_-$ and $\varepsilon_+$ as two strip-like ends coming from conformal embeddings, and which may be positive or negative depending on the context.\\
Let now $d\geq 2$ and pick a tuple $\vec{L}=(L_0,\ldots,L_d)$ of Lagrangians in $\lagmd(M,\omega)$. We define $$\mathcal{R}^{d+1,2}(\vec{L}):=\mathcal{R}^{d+1}(\vec{L})\times (0,\infty)$$ Over $\mathcal{R}^{d+1,2}(\vec{L})$ we have a fiber bundle $$\pi^{d+1,2}(\vec{L})\colon \mathcal{S}^{d+1,2}(\vec{L})\to \mathcal{R}^{d+1,2}(\vec{L})$$ where the fiber $S_{r,w}:=\left(\pi^{d+1,2}(\vec{L})\right)^{-1}(r,w)$ over $(r,w)\in \mathcal{R}^{d+1,2}(\vec{L})$ equals $(S_r,w)$, where $S_r\in \mathcal{S}^{d+1}(\vec{L})$ was defined in Section \ref{sourcespaces}.\\
We fix a consistent choice of strip-like ends for the universal family $(\pi^{d+1}(\vec{L}))_{d,\vec{L}}$ for the rest of this section. A universal choice of strip-like ends on the universal family $(\pi^{d+1,2}(\vec{L}))_{d,\vec{L}}$ is said to be compatible with the above fixed universal choice of strip-like ends on $(\pi^{d+1}(\vec{L}))_{d,\vec{L}}$ if for any $d$, any $\vec{L}$ and any $(r,w)\in \mathcal{R}^{d+1,2}(\vec{L})$ the induced choice of strip-like ends on $(S_r,w)$ agrees up to a shift with that on $S_r$.\\

Fix a tuple $\vec{L}=(L_0,\ldots, L_d)$ of Lagrangians in $\lagmd(M,\omega)$. For any colored labelled tree $T\in \mathcal{T}^{d+1}_\col(\vec{L})$ we write $$\mathcal{R}^{T,2}:=\prod_{v\notin V^\col(T)}\mathcal{R}^{|v|}(\vec{L_v})\times \prod_{v\in V^\col(T)}\mathcal{R}^{|v|,2}(\vec{L_v}) $$ That is, to each uncolored vertex we associate a family of marked disks, while to each colored vertex we associate a family of stacked marked disks. We then define $$\overline{\mathcal{R}^{d+1,2}(\vec{L})}:=\bigcup_{T\in T^{d+1}_\col(\vec{L})}\mathcal{R}^{T,2}$$
We have the following standard-looking result.
\begin{lem}
    $\overline{\mathcal{R}^{d+1,2}(\vec{L})}$ admits the structure of a generalized manifold with corners of dimension $d-1$ which realizes Stasheff's multiplihedron.
\end{lem}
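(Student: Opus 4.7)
The plan is to proceed in three stages: verify smoothness and dimension of the interior, compute dimensions of all strata combinatorially, then construct boundary charts and compare with the Mau--Woodward realization.

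First I would note that the top stratum $\mathcal{R}^{d+1,2}(\vec{L}) = \mathcal{R}^{d+1}(\vec{L}) \times (0,\infty)$ is smooth of dimension $(d-2)+1 = d-1$ thanks to Lemma \ref{disc2} and the extra real factor. For a general stratum $\mathcal{R}^{T,2}$ indexed by $T\in \mathcal{T}^{d+1}_{\col}(\vec{L})$, an uncolored vertex (which must satisfy $|v|\geq 3$ by the second axiom of a coloring) contributes $|v|-3$ dimensions via $\mathcal{R}^{|v|}(\vec{L_v})$, while a colored vertex contributes $|v|-2$ because of the additional stacking parameter in $\mathcal{R}^{|v|,2}(\vec{L_v})$. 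Using the tree identities $|E^{\textnormal{int}}(T)| = |V(T)|-1$ and $\sum_v |v| = 2|E^{\textnormal{int}}(T)|+(d+1)$, a short count gives $\sum_v(|v|-3) = d-1-|V(T)|$, hence
\[ \dim \mathcal{R}^{T,2} = d-1 - (|V(T)| - |V^\col(T)|) = d-1 - |V^l(T)| - |V^r(T)|. \]
So the codimension is precisely the number of vertices not sitting at the colored level, which is the right numerology for the multiplihedron.

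Second, I would build boundary charts around each stratum by combining two kinds of gluing. The first is ordinary nodal gluing along the interior edges of $T$, exactly as in the proof of Lemma \ref{lemcluster}, producing small gluing parameters in $[-1,0)$ that detect convergence to strata with more uncolored vertices and whose compatibility is ensured by our fixed consistent choice of strip-like ends. The second is a new \emph{coloring degeneration} dictated by Lemma \ref{polcone}: the polyhedral cone $\lambda(T,V^\col(T))$ parametrizing admissible coloring metrics supplies a collar in the extra directions, encoding how colored vertices `percolate' upward toward leaves (as stacking parameters tend to $0$) or downward toward the root (as they tend to $\infty$), each time splitting off a vertex into $V^l(T)$ or $V^r(T)$ respectively. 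The dimension count above matches exactly the number of free coloring parameters predicted by Lemma \ref{polcone}, so these collars have the right size to fill out the corners.

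Finally I would identify the resulting stratified space with Stasheff's multiplihedron. The cleanest route is to appeal directly to the polytopal realization of the multiplihedron by Mau--Woodward \cite{Mau2010GeometricMultiplihedra}: after forgetting Lagrangian labels, our space is precisely their moduli of quilted stable discs of type $d+1$, whose face poset coincides with that of $J_{d+1}$. Since the labels $\vec{L}$ only decorate boundary arcs and do not alter the underlying combinatorics, the identification descends to the labelled setting. The main technical obstacle will be checking that the nodal (horizontal) and coloring (vertical) gluing directions assemble smoothly at strata where both kinds of degeneration occur simultaneously. This is where the compatibility assumption between the strip-like ends on $(\pi^{d+1,2}(\vec{L}))_{d,\vec{L}}$ and $(\pi^{d+1}(\vec{L}))_{d,\vec{L}}$ is essential: it guarantees that nodal gluing is unaffected by the stacking parameters, so that the two families of boundary parameters are transverse by construction, which is what produces the generalized manifold-with-corners structure and prevents the simultaneous degenerations from interacting in a singular way.
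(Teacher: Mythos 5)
Your stratification, dimension count, and overall strategy (explicit gluing charts plus identification with the Mau--Woodward multiplihedron) are in the same spirit as the paper's construction, and the computation $\dim\mathcal{R}^{T,2}=d-1-|V^l(T)|-|V^r(T)|$ is correct and matches the codimension claims made case by case in the text. However, there is a genuine gap in how you propose to handle the strata indexed by \emph{upper} trees (those with uncolored root), and it is not the obstacle you identify. The difficulty there is neither a failure of transversality between nodal and stacking directions nor something resolved by the compatibility of strip-like ends. The real problem, illustrated by Example \ref{weneedintrinsic}, is that the coloring axiom (all colored vertices equidistant from the root) couples the gluing parameters: when the uncolored discs sitting below the colored level are glued in different orders, the stacking parameter the resulting configuration must carry depends on the conformal ``width'' already accumulated by the previously glued uncolored pieces, so naive gluing is not associative and the putative charts would disagree on overlaps. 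The paper's fix, following Sylvan, is the \emph{intrinsic width function} $w_i^{d,\vec{L}}$ --- whose existence and uniqueness is a separate lemma --- entering the gluing formula through the corrected lengths $l_i=e^{-1/\rho}-w_{v_i}-w_i^k(S_r)$. Your proposal contains no substitute for this device, so the ``coloring collar'' you extract from Lemma \ref{polcone} cannot actually be assembled into a well-defined atlas near upper trees.

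A second, smaller issue: you claim that transversality of the two families of parameters ``prevents the simultaneous degenerations from interacting in a singular way.'' The opposite is the point: simultaneous splittings above an uncolored root are parametrized by the polyhedral cones $\rho(T,V^\col(T))$, which are in general not simplicial (see the configuration of Figure \ref{singularity}), and this is exactly why $\overline{\mathcal{R}^{d+1,2}(\vec{L})}$ is only a \emph{generalized} manifold with corners rather than an honest one. A complete proof must exhibit these non-simplicial local models rather than argue them away. (As a minor point of convention, in the paper's parametrization the colored level reaches the root as $w\to 0$ and splits toward the leaves as $w\to\infty$, the reverse of the percolation you describe.)
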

\noindent As the definition of $\overline{\mathcal{R}^{d+1,2}(\vec{L})}$ involves two kind of moduli spaces the construction of boundary charts is more delicate than in the standard case of $\overline{\mathcal{R}^{d+1}(\vec{L})}$.\\
We split the construction of the boundary charts for $\overline{\mathcal{R}^{d+1,2}(\vec{L})}$ in three cases. A colored labelled tree is called:
\begin{enumerate}
    \item a lower tree if the root has valency equal to $2$, and is hence in particular the only colored vertex;
    \item a middle tree if the root is colored and has valency greater than $2$;
    \item an upper tree if the root is not colored.
\end{enumerate}
We will see that constructing boundary charts near lower and middle trees is easy, while upper trees are more delicate to handle as there are multiple simultaneous splittings.\\

\textbf{(1) Lower trees.} First, we consider the colored labelled tree $T^2\in \mathcal{T}^{d+1}(\vec{L})$ with only two vertices and two-valent root. Notice that $\mathcal{R}^{T^2,2}=\mathcal{R}^{2,2}(L_0,L_d)\times \mathcal{R}^{d+1}(\vec{L})$ and recall that we defined $\mathcal{R}^{2,2}(L_0,L_d)$ to be a singleton. We define a map $$\gamma^{T^2,2}\colon(-1,0)\times \mathcal{R}^{T^2,2}\to \mathcal{R}^{d+1,2}(\vec{L})$$ as follows: given $r\in \mathcal{R}^{d+1}(\vec{L})$ and $\rho\in (-1,0)$ we set $$\gamma^{T^2,2}(\rho, pt,r,):=(\tilde{r},-\rho)$$ where $\tilde{r}\in \mathcal{R}^{d+1}(\vec{L})$ represents the surface obtained by gluing $pt\in \mathcal{R}^{2,2}(L_0,L_d)$ to the $0$th marked point of $r$ with length $\rho$. Notice that $r=\tilde{r}$ as elements of $\mathcal{R}^{d+1}(\vec{L})$, but they may come with different choices of strip-like ends. \\
Let now $T\in \mathcal{T}^{d+1}_\col(\vec{L})$ be an arbitrary colored labelled tree with $2$-valent root. Notice that $\mathcal{R}^{T,2}= \mathcal{R}^{2,2}(L_0,L_d)\times \mathcal{R}^{T'}$, where $T'$ is obtained by collapsing the positive pointing edge attached to the root of $T$ and can hence be viewed as an alement of $\mathcal{T}^{d+1}(\vec{L})$, as all the non root vertices of $T$ are stable by definition of colored tree. We define a map $$\gamma^{T,2}\colon (-1,0)\times (-1,0)^{|E^\textnormal{int}(T)|-1}\times \mathcal{R}^{T,2}\to \mathcal{R}^{d+1,2}(\vec{L})$$ as follows: given $(r_v)_{v\in V(T')}\in \mathcal{R}^{T'}$, $\rho\in (-1,0)$ and $\rho_e\in (-1,0)$ for any $e\in E^\textnormal{int}(T')$ we set $$\gamma^{T,2}\left(\rho, (\rho_e)_e, (pt, (r_v)_v)\right):=\gamma^{T^2,2}\left(\rho, pt, \gamma^{T'}((\rho)_e, (r_v))\right)$$ where $$\gamma^{T'}\colon (-1,0)^{|E^\textnormal{int}(T')|}\times \mathcal{R}^{T'}\to \mathcal{R}^{d+1}(\vec{L})$$ is the gluing map defined in \cite{Seidel2008FukayaTheory}. Notice that by considering trivial gluing $\gamma^{T,2}$ extends to a map $$\overline{\gamma^{T,2}}\colon (-1,0]\times (-1,0]^{|E^\textnormal{int}(T)|-1}\times \mathcal{R}^{T,2}\to \overline{\mathcal{R}^{d+1,2}(\vec{L})}.$$
\begin{rem}
    All these charts will correspond to genuine boundary charts in the sense of manifolds with corners. The only codimension one face arising in this subcase is parametrized by $T_2$, and in general, codimension $k$ faces arising here are parametrized by bicolored trees with two-valent root and $k$ uncolored vertices.
\end{rem}

\textbf{(2) Middle trees.} Let $T\in \mathcal{T}^{d+1}_\col(\vec{L})$ be a colored labelled tree with colored stable root. Notice that $$\mathcal{R}^{T,2}=\prod_{v\neq v_T}\mathcal{R}^{|v|}\times \mathcal{R}^{|v_T|,2}.$$ Since the valency of $v_T$ is at least $3$ by assumption and $v_T$ is colored (hence there are no two-valent vertices in $T$), we may see $T$ as an element of $\mathcal{T}^{d+1}(\vec{L})$ by forgetting the coloring. In particular, $\mathcal{R}^{T}$ is well-defined. We define a map $$\gamma^{T,2}\colon (-1,0)^{|E^\textnormal{int}(T)|}\times \mathcal{R}^{T,2}\to \mathcal{R}^{d+1,2}(\vec{L})$$ as follows: given $\rho_e\in (-1,0)$ for any $e\in E^\textnormal{int}(T)$, $r_v\in \mathcal{R}^{|v|}$ for any $v\neq v_T$ and $(r_{v_T},w)\in \mathcal{R}^{|v_T|,2}$ we set $$\gamma^{T,2}\left((\rho_e)_e, (r_{v_T},w), (r_v)_v\right):= \left(\gamma^{T}\left((\rho_e)_e, (r_v)_{v\in V(T)}\right), w\right).$$ By considering trivial gluing $\gamma^{T,2}$ extends to a map $$\overline{\gamma^{T,2}}\colon (-1,0]^{|E^\textnormal{int}(T)|}\times \mathcal{R}^{T,2}\to \overline{\mathcal{R}^{d+1,2}(\vec{L})}.$$
\begin{rem}
All these charts correspond to genuine boundary charts in the sense of manifolds with corners. The codimension one faces arising in this subcase are parametrized by colored trees with colored root and one uncolored vertex, and in general, codimension $k$ faces arising here are parametrized by bicolored trees with colored root and $k$ uncolored vertices.
\end{rem}

\textbf{(3) Upper trees.} As soon as the root is not colored, there may be complications in the choice of gluing lenghts, basically due to the fact that the coloring distance is unique, so that \textit{colored vertices should remember what came before them} (where the concept of before is determined by the aforedefined orientation on trees).
\begin{example}\label{weneedintrinsic}
Consider the colored $3$-tree labeled by the cyclically different (see page \pageref{cicldiff}) tuple $\vec{L}=(L_0,L_1,L_2,L_3)$ of Lagrangians in $\lagmd(M,\omega)$ and the associated stacked disk configuration in $\overline{\mathcal{R}^{4,2}(\vec{L})}$ depicted in Figure \ref{stackedtree1}.

     \begin{center}
     \begin{figure}[H]
\includegraphics[width=12cm, height=10cm]{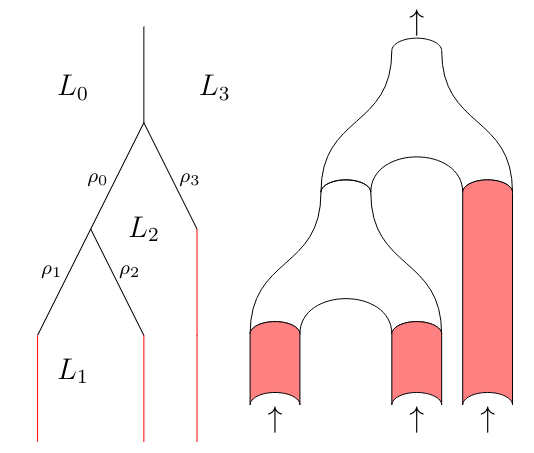}
     \caption{The colored tree we are discussing and the associated disks configuration in $\overline{\mathcal{R}^{4,2}(\vec{L})}$ for a cyclically different tuple $\vec{L}=(L_0,L_1,L_2, L_3)$. Black configurations correspond to elements of $\mathcal{R}^{k,1}$, while red configurations correspond to elements of $\mathcal{R}^{l,2}$ for some $k,l$.}
         \label{stackedtree1}
     \end{figure} \end{center}
%\begin{figure}[htp]
%\centering
 %  \includegraphics[width=17cm, height=7cm]{}
  %  \caption{The colored tree we are discussing and the associated disks configuration in $\overline{\mathcal{R}^{3+1,2}}$. Black configurations correspond to elements of $\mathcal{R}^{k,1}$, while red configurations correspond to elements of $\mathcal{R}^{l,2}$ for some $k,l$.}
   % \label{fig:galaxy}
%\end{figure}
The definition of colored tree forces $\rho_1=\rho_2$ and $\rho_0+\rho_1=\rho_3$. Gluing the two non-colored disks following the only interior edge which does not touch colored vertices we get the tree/configuration depicted in Figure \ref{stackedtree2}.
   \begin{center}
     \begin{figure}[H]
         \centering
\includegraphics[width=12cm, height=7.5cm]{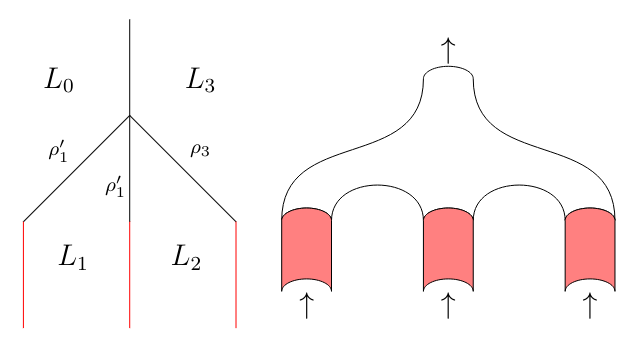}
\caption{}
 \label{stackedtree2}
     \end{figure}
    \end{center}
    
%\begin{figure}[H]
%\centering
 %  \includegraphics[width=17cm, height=7cm]{}
  %  \caption{The formation of a teardrop in the limit of a sequence of pseudoholomorphic curves with some boundary condition on an immersed Lagrangian $L$.}
   % \label{fig:galaxy}
%\end{figure}
\noindent In this case, the definition of colored tree forces $\rho_1'=\rho_3=\rho_0+\rho_1$. On the other hand, if, starting again with the configuration from Figure \ref{stackedtree1}, we glue along the interior edges touching a colored vertex, we get the tree/configuration depicted in Figure \ref{stackeddisc3}.
 \begin{center}
     \begin{figure}[H]
         \centering
 \includegraphics[width=12cm, height=8cm]{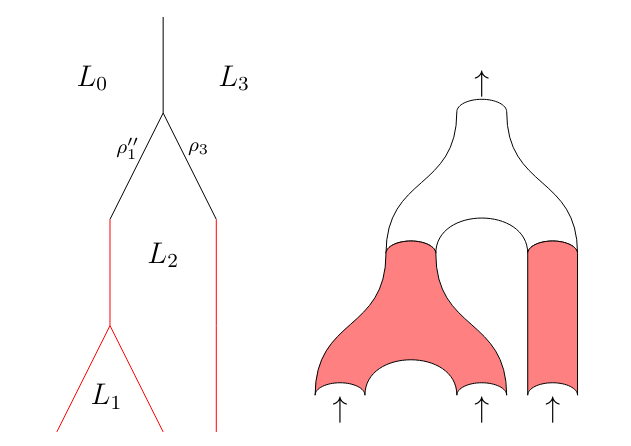}
         \caption{The colored tree we are discussing and the associated disks configuration in $\overline{\mathcal{R}^{3+1,2}}(\vec{L}$ for a cyclically different (see Page \pageref{cicldiff}) tuple $\vec{L}=(L_0,\ldots, L_3)$. Black configurations correspond to elements of $\mathcal{R}^{k,1}$, while red configurations correspond to elements of $\mathcal{R}^{l,2}$ for some $k,l$.}
         \label{stackeddisc3}
     \end{figure}
    \end{center}
    
%\begin{figure}
%\centering
 %  \includegraphics[width=17cm, height=7cm]{}
  %  \caption{The formation of a teardrop in the limit of a sequence of pseudoholomorphic curves with some boundary condition on an immersed Lagrangian $L$.}
   % \label{fig:galaxy}
%\end{figure}

\noindent Again the definition of colored tree forces $\rho_1''=\rho_3=\rho_0+\rho_1$. Performing one more gluing in both cases with lenght $\rho_1'=\rho_1''$ in order to get to an interior configuration in $\mathcal{R}^{3+1,2}$, we will in general get two different stacked disks, as in the first case it is $\rho_0$ that contributed to both gluings, while in the second one, it is $\rho_1$.
\end{example}

It follows from Example \ref{weneedintrinsic} that we have to refine the gluing process a bit, in order for non-root colored vertices to recall the gluing lenghts that were associated to preceedings (uncolored) vertices. The following definition comes from \cite{Sylvan2019OnCategories}.

\begin{defn}[\cite{Sylvan2019OnCategories}]
An intrinsic width function for the universal family $(\pi^{d+1}(\vec{L}))_{d,\vec{L}}$ consists of a family $\{w_i^{d,\vec{L}}\}$, $d\geq 2$ and $i\in \{1,...,d\}$, of functions $w_i^{d,\vec{L}}\colon\mathcal{R}^{d+1}(\vec{L})\rightarrow [0,\infty)$ such that:
\begin{enumerate}
    \item $w_1^2$ and $w_2^2$ are the zero function;
    \item let $S^{k+1}\in \mathcal{S}^{k+1}(\vec{L_1})$ and $S^{l+1}\in \mathcal{S}^{l+1}(\vec{L_2})$; if $S^{d+1}\in \mathcal{S}^{d+1}(\vec{L_1}\#_n\vec{L_2})$ is the disk\footnote{See page \pageref{cancelletto} for the definition of the operation $\#$ on tuples of Lagrangians.} (diffeomorphic to the surface) obtained by gluing the root of $S^{l+1}$ to the $n$th puncture of $S^{k+1}$ with lenght $\rho$ (assuming this gluing is admissible), then $$w_i^{d,\vec{L_1}\#_n\vec{L_2}}(S^{d+1})=\begin{cases} w_i^{k,\vec{L_1}}(S^{k+1}), \ \textnormal{ if } i<n\\ w_{i-n+1}^{l,\vec{L_2}}(S^{l+1})+\rho, \ \textnormal{ if } n\leq i<n+l\\ w_{i-l+1}^{k,\vec{L_1}}(S^{k+1}), \ \textnormal{ if }n+l\leq i\end{cases}$$
\end{enumerate}
\end{defn}
\begin{lem}
There is a unique choice of intrinsic width function.
\end{lem}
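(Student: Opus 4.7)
The plan is to prove uniqueness by induction on $d\geq 2$. The base case $d=2$ is immediate: the moduli space $\mathcal{R}^{3}(\vec{L})$ is a single point, and axiom (1) directly fixes $w_1^{2}=w_2^{2}=0$.

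For the inductive step, assume that $w_i^{k,\vec{L}}$ has been uniquely determined for every $k<d$ and every admissible tuple $\vec{L}$. Pick $S\in\mathcal{R}^{d+1}(\vec{L})$; the goal is to show $w_i^{d,\vec{L}}(S)$ is forced by the axioms. The natural strategy is to represent $S$, or configurations arbitrarily close to it, as an iterated gluing of smaller disks via the gluing maps $\gamma^{T}$ underlying the associahedron structure on $\overline{\mathcal{R}^{d+1}(\vec{L})}$ recalled in Lemma \ref{disc2}. For any stable tree $T\in\mathcal{T}^{d+1}(\vec{L})$ with $|V(T)|\geq 2$ and any admissible gluing parameters $(\rho_e)$, axiom (2) applied vertex-by-vertex along $T$, together with the inductive hypothesis (which supplies the widths on the smaller disks $r_v$), uniquely prescribes the values of the widths at $\gamma^{T}((\rho_e)_e,(r_v)_v)$.

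A central consistency check is that two different gluing decompositions of the same configuration must produce the same values. This is where the specific tripartite split in axiom (2), governed by the index ranges $\{i<n\}$, $\{n\leq i<n+l\}$, $\{n+l\leq i\}$, plays its role: when two elementary gluings are composed in either order, the corresponding $\rho$-shifts add along the path from the root to each puncture in exactly the same way, so the formula is associative and therefore consistent across overlapping charts. I would verify this compatibility carefully on the simplest non-trivial case of two successive gluings (which is essentially what Example \ref{weneedintrinsic} illustrates from the geometric side), since once this base associativity is in place the general case follows formally.

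The main obstacle is that the gluing charts a priori only cover a neighborhood of $\partial\overline{\mathcal{R}^{d+1}(\vec{L})}$, whereas $w_i^{d,\vec{L}}$ must be determined on all of $\mathcal{R}^{d+1}(\vec{L})$. To handle this, I would argue that any point in the open interior can be connected, by a smooth path inside a single gluing chart, to the deep boundary by letting the gluing parameters $\rho_e$ sweep through their full admissible range, so that the formula from axiom (2) extends the determined values along the entire path. Alternatively, one may exhibit $w_i^{d,\vec{L}}$ explicitly as a canonical geometric coordinate built from the fixed universal choice of strip-like ends (for instance, as the $s$-parameter at which the image of $\epsilon_i$ exits a canonically-defined central region of $S$), in which case the axioms are directly verified and uniqueness is automatic from the intrinsic nature of the construction.
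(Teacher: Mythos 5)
Your argument is essentially the paper's own proof: the paper disposes of the lemma in two lines by the same induction on $d$, anchoring uniqueness in the fixed choice $w_1^2=w_2^2=0$ and propagating it through the gluing axiom, deferring all details to \cite{Sylvan2019OnCategories}. You flesh out the two points the paper leaves implicit (compatibility of overlapping gluing charts and coverage of the interior of $\mathcal{R}^{d+1}(\vec{L})$), so your write-up is, if anything, more careful than the one in the text.
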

\begin{proof}
Build it by induction on $d$; the fact that the definition requires a fixed choice of function for $d=2$ implies uniqueness of the construction. See \cite{Sylvan2019OnCategories}.
\end{proof}

Back to the construction of boundary charts for $\overline{\mathcal{R}^{d+1,2}(\vec{L})}$ near upper trees. Let $k\geq 3$ and consider first a colored labelled tree $T^k\in \mathcal{T}^{d+1}_\col(\vec{L})$ with only one non-colored vertex, the root, of valency $k$. In particular, $T^k$ has $k$ colored vertices. Notice that in this case we have $$\mathcal{R}^{T^k,2}= \mathcal{R}^k\times \prod_{v\neq v_T}\mathcal{R}^{|v|,2}.$$ We define a map $$\gamma^{T^k,2}\colon(-1,0)\times \mathcal{R}^{T^k,2}\to \mathcal{R}^{d+1,2}(\vec{L})$$ as follows: given $\rho\in (-1,0)$, $r\in \mathcal{R}^k$ and $(r_v, w_v)\in \mathcal{R}^{|v|,2}$ for any $v\neq v_T$ we set $$\gamma^{T^k,2}\left(\rho, r, (r_v,w_v)_v\right):=(\tilde{r},e^\frac{-1}{\rho})$$ where $\tilde{r}$ corresponds to the configuration obtained by gluing the $i$th marked point of $r$ to the negative marked point of $S_{r_{v_i}}$, where $v_i\in V(T^k)$ is the $i$th colored vertex of $T^k$ in counterclockwise order starting from the root, with gluing length $$l_i:=e^{-{\frac{1}{\rho}}}-w_{v_i}-w_i^k(S_r)$$ for any $i\in \{1,\ldots,k\}$.\\
Let now $T\in \mathcal{T}^{d+1}_\col(\vec{L})$ be a colored labelled tree with uncolored root. Consider the decomposition $V(T)=V^l(T)\sqcup V^\col(T)\sqcup V^r(T)$ of the set of vertices of $T$ introduced in Section \ref{colored trees}: we define $T^l$ to be the (uncolored) tree generated by $V^l(T)$ and the interior edges attached to those vertices (some of which become leaves), and by $T_1^r,\ldots, T_n^r$ the connected components of the colored tree generated by $V^r(T)\sqcup V^\col(T)$ (this involves a slight change in the induced metrics, but it is a nuance). Note that each $T_i^r$ is either a lower or a middle tree. Since by construction the tree $T_i^r$ has a colored root for any $i\in \{1,\ldots, n\}$, it is either a lower or a middle tree. In particular any tree $T_i^r\setminus V^\col(T)$ is a union of uncolored trees $T_{i,j}^r$, $j\in \{0,\ldots, k_i\}$ for some $k_i$. Notice that we have $$\mathcal{R}^{T,2}= \mathcal{R}^{T^l}\times \prod_{i=0}^n\mathcal{R}^{T_i^r,2}= \mathcal{R}^{T^l}\times \prod_{v\in V^\col(T)}\mathcal{R}^{|v|,2}\times \prod_{i,j}\mathcal{R}^{T_{i,j}^r}$$

 \begin{rem}At this point, we would like to define a map $$\gamma^{T,2}\colon (-1,0)^{|E^\textnormal{int}(T^l)|}\times (-1,0)\times (-1,0)^{\sum_i|E^\textnormal{int}(T^r_i)|}\times \mathcal{R}^{T,2}\longrightarrow \mathcal{R}^{d+1,2}(\vec{L})$$ however, in this case, there may be colored trees parametrizing generalized corners, e.g. the colored $4$-tree associated to the configuration depicted in Figure \ref{singularity} (cfr. \cite{Mau2010GeometricMultiplihedra}).
 \begin{center}
     \begin{figure}[H]
         \centering
 \scalebox{1.5}{
 %\begin{tikzpicture}
  %  \node at (-1,-1) {$L_0$}; \node at (1,-1) {$L_3$}; \node at (-0.8,-5.5) {$L_1$}; \node at (0, -3.5) {$L_2$};
	%\node child {
		%child {child {child[red] child[red]} child[white] child {child[red] child[red]}}
        %child[white] child[white]
		%child {child {child[red] child[red]} child[white] child {child[red] child[red]}}},
         %   \draw[color=white] (1,0) -- (2,0),
%\end{tikzpicture}
\begin{tikzpicture}

    \pic[tqft/pair of pants,name=a, draw,];
    %\pic[tqft/pair of pants,name=b_1, anchor=incoming boundary, at=(a-outgoing boundary), draw,];
    %\pic[tqft/pair of pants,name=b_2, anchor=incoming boundary, at=(a-outgoing boundary 2), draw,];

         \pic[tqft,
     incoming boundary components=1,
     outgoing boundary components=2,
     name=b1,
     draw,
     cobordism height=2cm, anchor=incoming boundary, at=(a-outgoing boundary 1), offset=-1];

              \pic[tqft,
     incoming boundary components=1,
     outgoing boundary components=2,
     name=b2,
     draw,
     cobordism height=2cm, anchor=incoming boundary, at=(a-outgoing boundary 2)];

                   \pic[tqft,
     incoming boundary components=1,
     outgoing boundary components=1,
     name=c1,
     fill=red!50,
     draw,
     cobordism height=1cm, anchor=incoming boundary, at=(b1-outgoing boundary 1)];
                        \pic[tqft,
     incoming boundary components=1,
     outgoing boundary components=1,
     name=c2,
     fill=red!50,
     draw,
     cobordism height=1cm, anchor=incoming boundary, at=(b1-outgoing boundary 2)];
                        \pic[tqft,
     incoming boundary components=1,
     outgoing boundary components=1,
     name=c3,
     fill=red!50,
     draw,
     cobordism height=1cm, anchor=incoming boundary, at=(b2-outgoing boundary 1)];
                        \pic[tqft,
     incoming boundary components=1,
     outgoing boundary components=1,
     name=c4,
     fill=red!50,
     draw,
     cobordism height=1cm, anchor=incoming boundary, at=(b2-outgoing boundary 2)];

\end{tikzpicture}}
         \caption{The singularity in the space $\overline{\mathcal{R}^{5,2}}$.}
         \label{singularity}
     \end{figure}
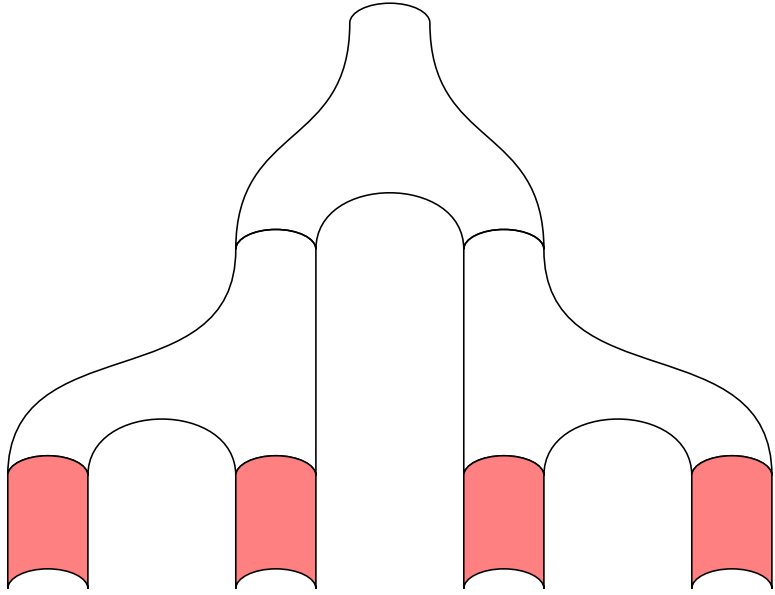
   
    \end{center}
%\begin{figure}[htp]\centering
 %  \includegraphics[width=17cm, height=7cm]{}
  %  \caption{The singularity in the space $\overline{\mathcal{R}^{4+1,2}}$.}
   % \label{fig:galaxy}
%\end{figure}
The codimension one faces arising in this subcase are parametrized by colored trees with the root as the only uncolored vertex, and, in general, codimension $k$ faces are parametrized by colored trees with uncolored root and $k$ total uncolored vertices.
\end{rem}

Let $T$ be a $d$-leafed tree with no vertices of valency $1$ and finitely many interior edges, and let $V^\col(T)\subset V(T)$ be a subset of vertices. Recall that by Lemma \ref{polcone} the space $\lambda(T,V^\col(T))$ is a polyhedral cone. For each $\lambda\in \lambda(T,V^\col(T))$ define the map $\rho^\lambda\colon E^\textnormal{int}(T)\rightarrow (-1,0)^{|E^\textnormal{int}(T)|}$ by $$\rho^\lambda:= -e^{-\lambda}$$ and let $\rho(T,V^\col(T))$ to be the space of all such maps; it is easy to see that $\rho(T, V^\col(T))$ is a polyhedral cone in $(-1,0)^{|E^\textnormal{int}(T^l)|}\times (-1,0)\times (-1,0)^{\sum_i|E^\textnormal{int}(T^r_i)|}$, as by the decomposition above we get $$|E^\textnormal{int}(T)|-|V^\textnormal{col}(T)| = |E^\textnormal{int}(T^l)|+\sum_i|E^\textnormal{int}(T^r_i)|.$$
We remark that the manifold $\mathcal{R}^{T,2}$ does not depend on the choice of the metric $\lambda\in \lambda(T,V^\col(T))$ but only on the choice of the subset $V^\col(T)$ by definition of colored tree. We hence define a map $$\gamma^{T,2}\colon\rho(T,V^\col(T))\times \mathcal{R}^{T,2}\to \mathcal{R}^{d+1,2}(\vec{L})$$ as follows: let $$\begin{cases}
    (r^l_v)_{v\in V^l(T)}\in \mathcal{R}^{T^l},\\
    \left((r^r_{v_i},w_i),(r^r_{i,v})_{v\neq v_i\in V(T^r_i)}\right)\in \mathcal{R}^{T^r_i,2}= \mathcal{R}^{|v_i|,2}(\vec{L_{v_i}})\times \prod_{v\neq v_i\in V(T_i^r)}\mathcal{R}^{|v|}(\vec{L_v}) \textnormal{  for any }i\in \{1,\ldots, n\},
    %(r^\col_v,w_v)\in \mathcal{R}^{|v|,2}) \textnormal{  for any }v\in V^\col(T),\\
    %(r^r_{i,j}_v)_{v\in V(T_{i,j}^r}\in \mathcal{R}^{T^r_{i,j}} \textnormal{ for any } i,j
\end{cases}$$
and write $$\vec{r}:=\left( (r^l_v)_{v\in V^l(T)},  \left((r^r_{v_1},w_1),(r^r_{1,v})_{v\neq v_1}\right),\ldots, \left((r^r_{v_n},w_n),(r^r_{n,v})_{v\neq v_n}\right)\right)$$
%(r^\col_v,w_v)_{v\in V^\col(T)},(r^r_{1,1}_v)_{v\in V(T_{1,1}^r)}, \ldots, (r^r_{n,k_n}_v)_{v\in V(T_{n,k_n}^r)}\right)\in \mathcal{R}^{T,2}$$
and consider $\rho\in \rho(T,V^\col(T))$, which we can write as $$\rho = \left(\vec{\rho^l}, \tilde{\rho}, \vec{\rho_{1}^r},\ldots, \vec{\rho_n^r}\right)\in  (-1,0)^{|E^\textnormal{int}(T^l)|}\times (-1,0)\times \prod_{i=1}^n(-1,0)^{|E^\textnormal{int}(T^r_{i})|}$$
%\vec{\rho_{n,k_n}^r}\right)\in (-1,0)^{|E^\textnormal{int}(T^l)|}\times (-1,0)\times (-1,0)^{\sum_{i,j}|E^\textnormal{int}(T^r_{i,j})|}$$
then we set $$\gamma^{T,2}(\rho, \vec{r}):=\gamma^{T^n,2}\left(\rho, \gamma^{T^l}(\vec{\rho^r}, (r^l_v)_v), \gamma^{T_1^r,2}(\vec{\rho^r_1}, (r^r_{v_i},w_i),(r^r_{1,v})_{v\neq v_1}), \ldots, \gamma^{T_n^r,2}(\vec{\rho^r_n}, (r^r_{v_n},w_i),(r^r_{n,v})_{v\neq v_n})\right)$$

%$$\Biggl(\gamma^{T^n, 2}\biggl(\gamma^{T^{|V^\textnormal{col}(T)|},2}\Bigl(\gamma^{T^l,1}(\vec{r^l},\vec{\rho^l}), (\vec{r^\textnormal{col}}, \vec{w}), \tilde{rho} \Bigr), \gamma^{T^r_{1,1}}(\vec{r^r_{1,1}}, \vec{\rho^r_{1,1}}),...,\gamma^{T^r_{n,m}}(\vec{r^r_{n,m}}, \vec{\rho^r_{n,m}})\biggr), e^{-\frac{1}{\rho}}\Biggr)$$
%where $T^{|V^\textnormal{col}(T)|}$ is the colored labelled tree obtained by collapsing all the interior edges of $T^l$ and  the unique colored tree with only one vertex of valency $|V^\col(T)|+1$ with the .
%$$\gamma^{T^n,2}\left(\rho,\gamma^{T^l}(\vec{r^l},\vec{\rho^l}), \gamma^{T_1^r,2}(\vec{\rho^r_1},\vec{r^r_1},w_1),\ldots, \gamma^{T_n^r,2}(\vec{\rho^r_n},\vec{r^r_n},w_n)     \right)$$
%{\huge to swap order!}
In short, first we glue the all the uncolored disks, then we glue the resulting root with the colored disks following the receipt outlined just here above, and then we glue what's remaining, which is represented by a middle tree.
Again, exactly as before, we extend $\gamma^{T,2}$ to a map $$\overline{\gamma^{T,2}}\colon \mathcal{R}^{T,2}\times \overline{\rho(T)}\longrightarrow \overline{\mathcal{R}^{d+1,2}}$$ by considering trivial gluing.\\

We conclude this section with the following Lemma (cfr. \cite{Mau2010GeometricMultiplihedra, Sylvan2019OnCategories}).
\begin{lem}
    The space $\overline{\mathcal{R}^{d+1,2}(\vec{L})}$ admits the structure of generalized manifold with corners by declaring the maps $\gamma^{T,2}$, $T\in \mathcal{T}^{d+1}_\col(\vec{L})$, to be boundary charts for small enough choice of parameters. Moreover, $\overline{\mathcal{R}^{d+1,2}(\vec{L})}$ realizes Stasheff's $(d-1)$-multiplihedron.
\end{lem}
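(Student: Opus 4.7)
The plan is to verify the generalized-manifold-with-corners structure by checking that the gluing maps $\gamma^{T,2}$ form a compatible atlas covering every stratum, and then to identify the resulting face poset with that of the Stasheff multiplihedron $J_d$ as described in \cite{Mau2010GeometricMultiplihedra}.

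First I would record the dimension count: the interior $\mathcal{R}^{d+1,2}(\vec{L}) = \mathcal{R}^{d+1}(\vec{L})\times (0,\infty)$ is smooth of dimension $(d-2)+1=d-1$, matching $\dim J_d$. For any colored tree $T\in \mathcal{T}^{d+1}_\col(\vec{L})$, Lemma \ref{polcone} gives $\dim\lambda(T,V^\col(T)) = |E^\textnormal{int}(T)|+1-|V^\col(T)|$, and an easy sum shows that the expected codimension of $\mathcal{R}^{T,2}$ inside $\overline{\mathcal{R}^{d+1,2}(\vec{L})}$ is the number of independent parameters missing from the cone $\rho(T,V^\col(T))$, consistent with what the multiplihedron predicts.

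Second, I would handle the three families of boundary charts in turn. For lower trees (two-valent root, hence uncoloured layer absent above the colour) and middle trees (colored root of valency $\ge 3$), the gluing parameters are mutually independent half-open intervals, so after shrinking $\gamma^{T,2}$ is a diffeomorphism onto a neighbourhood of $\mathcal{R}^{T,2}$ and yields a standard corner chart, exactly as in \cite[Section 9]{Seidel2008FukayaTheory}. The upper-tree case is the main technical point: here the gluing lengths across the colored layer are coupled because all colored vertices must lie at the same distance from the root. The intrinsic width function $w_i^{d,\vec{L}}$ encodes precisely what each colored vertex $v_i$ must remember about the uncoloured tree $T^l$ that precedes it, and the formula
\[
l_i=e^{-1/\rho}-w_{v_i}-w_i^k(S_r)
\]
is cooked up so that the resulting stacked configuration is independent of the order in which one performs the gluings; I would check this by induction on $|V^l(T)|$, which settles the subtlety exemplified in Example \ref{weneedintrinsic}. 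Compatibility of overlapping charts then follows: if $T_1,T_2$ are related by collapsing an interior edge with a non-zero parameter, the transition $\gamma^{T_2,2}\circ(\gamma^{T_1,2})^{-1}$ is smooth by consistency of the underlying strip-like ends on $(\pi^{d+1}(\vec{L}))_{d,\vec{L}}$ and the inductive definition of $w_i^{d,\vec{L}}$.

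Third, the generalized (as opposed to ordinary) corner structure comes exclusively from the upper-tree charts: the domain of $\gamma^{T,2}$ is the polyhedral cone $\rho(T,V^\col(T))$, which for an upper tree with uncoloured root and several colored children need not be an orthant, giving local models of the form depicted in Figure \ref{singularity}. I would write down the local model directly from the polyhedral description in Lemma \ref{polcone} and observe that these are precisely the local models appearing in Mau--Woodward's realization of $J_d$.

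Finally, for the multiplihedron identification, I would match strata to the combinatorial description of $J_d$: codimension-one faces are in bijection with colored trees that have exactly one uncoloured edge, one uncoloured child of the colored layer, or a two-valent root, which is exactly the list of lower, middle, and minimal upper trees above; higher-codimension faces are obtained by iterating the same combinatorics. Since the face poset of $\overline{\mathcal{R}^{d+1,2}(\vec{L})}$ (indexed by $\mathcal{T}^{d+1}_\col(\vec{L})$ forgetting the Lagrangian label, which is purely formal) coincides with that of $J_d$ and the local models agree, the two are homeomorphic as generalized manifolds with corners. The main obstacle, as hinted in the excerpt, is the upper-tree case and, specifically, verifying via the intrinsic width function that simultaneous gluing across a colored layer is well-defined and smoothly compatible with the charts coming from adjacent strata.
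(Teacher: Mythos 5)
Your proposal is correct and follows exactly the route the paper intends: the paper itself states this lemma without proof, deferring to \cite{Mau2010GeometricMultiplihedra} and \cite{Sylvan2019OnCategories}, with the real content being the chart construction (lower/middle/upper trees, the intrinsic width function, and the polyhedral cones $\rho(T,V^{\col}(T))$) already carried out in the preceding subsection. Your outline simply makes explicit the verification steps (order-independence of gluing across the colored layer, chart compatibility, face-poset matching) that the paper leaves to those references, and correctly identifies the upper-tree case as the only genuinely delicate point.
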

%=================================================
%================================================
%=================================================

\subsection{Source spaces: moduli spaces of stacked clusters}
In this section we define moduli spaces of stacked clusters of disks starting from (the compactification of) moduli spaces of stacked disks, similarly to how we constructed moduli spaces of clusters of disks from (the compactification of) moduli spaces of disks in Section \ref{sourcespaces}. We will skip some details where the construction are the same.\\

Let $d\geq 1$ and pick a tuple $\vec{L}=(L_0,\ldots, L_d)$ of Lagrangians in $\lagmd(M,\omega)$. We define the moduli space of clusters of stacked disks (or stacked clusters of disks) with marked points as $$\mathcal{R}^{d+1,2}_C(\vec{L}):=\bigcup_{T\in \mathcal{T}^{d+1}_{U,\col}(\vec{L})} \mathcal{R}^{T,2}\times \lambda(T)$$ and also set $$\overline{\mathcal{R}^{d+1,2}_C(\vec{L})}:= \bigcup_{T\in \mathcal{T}^{d+1}_{\col}(\vec{L})} \mathcal{R}^{T,2}\times \overline{\lambda_U(T)}$$ We will write elements of $\mathcal{R}^{d+1,2}_C(\vec{L})$ as $(r,w,T,\lambda)$.\\
The following Lemma is the analogous of Lemma \ref{lemcluster} for stacked clusters. Its proofs combines the construction from Section \ref{stackeddisks} with gluing of line segments as in Section \ref{sourcespaces}.
\begin{lem}
    The space $\mathcal{R}^{d+1,2}_C(\vec{L})$ admits the structure of smooth manifold of dimension $d-1$. Moreover, the space $\overline{\mathcal{R}^{d+1,2}(\vec{L})}$ admits the structure of generalized manifold with corners and realizes Stasheff's $(d-1)$-multiplihedron.
\end{lem}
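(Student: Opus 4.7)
The plan is to mirror, in the stacked (colored-tree) setting, the construction of cluster moduli spaces carried out in Section~\ref{sourcespaces}, taking as input the structure of $\overline{\mathcal{R}^{d+1,2}(\vec{L})}$ established in the preceding lemma. I would first observe that, by definition, $\mathcal{R}^{d+1,2}_C(\vec{L})$ is the interior of
\[
\bigsqcup_{T\in \mathcal{T}^{d+1}_\col(\vec{L})} \mathcal{R}^{T,2}\times \overline{\lambda_U(T)},
\]
where the strata with all unilabeled edge lengths strictly positive and finite are identified with the part of the stacked-disc moduli space obtained by ``expanding'' unilabeled nodes into finite line segments. The dimension count is straightforward: $\mathcal{R}^{d+1}_C(\vec{L})$ has dimension $d-2$ by Lemma~\ref{lemcluster}, and passing from $\mathcal{R}^{d+1}(\vec{L})$ to $\mathcal{R}^{d+1,2}(\vec{L})$ adds exactly one stacking parameter (as encoded in the factor $(0,\infty)$ attached to the colored vertex above the root), so $\mathcal{R}^{d+1,2}_C(\vec{L})$ has dimension $d-1$.

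Next I would exhibit smooth charts on $\mathcal{R}^{d+1,2}_C(\vec{L})$ by composing the gluing maps $\gamma^{T,2}$ for colored trees, constructed in Section~\ref{stackeddiscs}, with the gluing maps for unilabeled interior edges of finite positive length used in Section~\ref{sourcespaces}. Since neither construction depends on the other (the line-segment gluing only affects unilabeled edges, while $\gamma^{T,2}$ only affects the disc configurations and stacking parameters), these maps commute up to a smooth change of parameters. Consistency of strip-like ends and of the system of ends ensures that overlapping charts patch together smoothly, producing a smooth manifold structure of dimension $d-1$ on the interior.

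For $\overline{\mathcal{R}^{d+1,2}_C(\vec{L})}$ I would define boundary charts by allowing three independent kinds of degeneration in the parameters of $\gamma^{T,2}$ and the attached line segments: shrinking interior edges of a colored tree to zero length (yielding the colored-tree boundary strata of the multiplihedron), sending a finite unilabeled edge to length zero (shrinking a collar), or sending it to infinity (breaking into two leafed clusters, as in Section~\ref{sectionlagtrees}). The resulting structure is a generalized manifold with corners, because the only non-trivial corners inherited from $\overline{\mathcal{R}^{d+1,2}(\vec{L})}$ are the generalized corners coming from upper trees (as in Figure~\ref{singularity}), and the extra collar directions do not interact with those. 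The multiplihedron property is then essentially automatic: since we have only attached collar neighborhoods indexed by unilabeled edges to boundary components of $\overline{\mathcal{R}^{d+1,2}(\vec{L})}$, the underlying cell structure is the same $(d-1)$-multiplihedron as produced by the preceding lemma.

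The main obstacle is bookkeeping at the upper-tree strata, where the gluing $\gamma^{T,2}$ uses the intrinsic width function to keep colored vertices at a common coloring distance. One has to verify that the line-segment gluings attached to unilabeled edges are compatible with the relations $l_i = e^{-1/\rho} - w_{v_i} - w_i^k$ imposed on the colored side, and that allowing such unilabeled edges to acquire finite positive length does not reintroduce the singularity of Figure~\ref{singularity} inside the cluster moduli space. This follows by the same type of argument used in the preceding lemma: unilabeled degenerations are independent of the coloring constraints, so the enlarged charts still realize the polyhedral cone $\overline{\rho(T,V^\col(T))}$ up to the additional collar directions.
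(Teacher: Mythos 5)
Your proposal is correct and follows essentially the same route the paper takes: the paper only sketches this lemma, saying that the proof combines the stacked-disc construction of Section \ref{stackeddiscs} with the line-segment gluing of Section \ref{sourcespaces}, and that the multiplihedron realization comes for free since one has only attached collar neighbourhoods to $\overline{\mathcal{R}^{d+1,2}(\vec{L})}$. Your write-up simply fleshes out that same argument (dimension count, compatibility of the two gluings, and the treatment of the upper-tree corners) in more detail.
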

\noindent As in the case of Lemma \ref{lemcluster}, the fact that $\overline{\mathcal{R}^{d+1,2}(\vec{L})}$ realizes the multiplihedron comes for free from its construction.\\
We define bundles $$\pi^{d+1,2}_C(\vec{L})\colon \mathcal{S}^{d+1,2}_C(\vec{L})\to \mathcal{R}^{d+1,2}_C(\vec{L})$$ where fibers are defined as in Section \ref{mbfuk} by replacing nodal points by line segments of length controlled by the metric part of $\mathcal{R}^{d+1,2}_C(\vec{L})$.\\
A universal choice of strip-like ends for the universal family $(\pi^{d+1,2}_C(\vec{L}))_{d,\vec{L}}$ is the pullback of a universal choice of strip-like ends for the universal family $(\pi^{d+1,2}(\vec{L}))_{d,\vec{L}}$. A system of ends for (the trees involved in the definition of) $(\pi^{d+1,2}_C(\vec{L}))_{d,\vec{L}}$ is just a choice of system of ends for (the trees involved in the definition of) $(\pi^{d+1}_C(\vec{L}))_{d,\vec{L}}$ by forgetting about two valent vertices. This last definition relies on the fact that we will not construct continuation functors between perturbation data with different Morse parts (in particular, continuation maps between Floer complexes for a couple of identical Lagrangians will be the identity, so that we do not need system of ends for that case).

%---------------------------------------------
%-------------------------------------------
%--------------------------------------------

\subsection{Interpolation data}\label{intdata}

We define the concept of interpolation datum, which is nothing else than the analogous of a perturbation datum but for stacked disks.\\
Fix a universal and consistent choice of strip-like ends for the universal family of cluster of disks $(\pi^{d+1}_C(\vec{L}))_{d,\vec{L}}$ and a universal, consistent and compatible choice of strip-like ends for the universal family of stacked clusters of disks $(\pi^{d+1,2}_C(\vec{L}))_{d,\vec{L}}$. We fix Floer data $(f^L,g^L,J^L)$ for any couple $(L,L)$ of identical Lagrangians in $\lagmd(M,\omega)$ and associated Morse perturbation data for the universal family $(\pi^{d+1}_C(\vec{L}))_{d,\vec{L}}$ (in practice, for the family $(\mathcal{T}^{d+1}_U(\vec{L}))_{d,\vec{L}}$), which we denote by $(\boldsymbol{f}, \boldsymbol{g})=(f^{\vec{L}}, g^{\vec{L}})_{d,\vec{L}}$. We denote by $E^{(\boldsymbol{f},\boldsymbol{g})}_\textnormal{reg}\subset E_\textnormal{reg}$ the subfamily of regular perturbation data whose Morse part agrees with $(\boldsymbol{f},\boldsymbol{g})$. Note that for a generic choice of $(\boldsymbol{f}, \boldsymbol{g})$ this subfamily is non-empty, and we actually assume it is for our choice. We construct continuation functors between Fukaya categories defined via elements of $E^{(\boldsymbol{f},\boldsymbol{g})}_\textnormal{reg}$.\\
\begin{defn}
    Fix two perturbation data $p,q\in E^{(\boldsymbol{f},\boldsymbol{g})}_\textnormal{reg}$. Let $d\geq 1$ and consider a tuple $\vec{L}=(L_0,\ldots, L_d)$ of Lagrangians in $\lagmd(M,\omega)$ and an element $(r,w,T,\lambda)\in \mathcal{R}^{d+1,2}_C(\vec{L})$. An interpolation datum between the perturbation data $\mathcal{D}^{\vec{L}}_p(r,T,\lambda)$ and $\mathcal{D}^{\vec{L}}_q(r,T,\lambda)$ associated to $p$ and $q$ on $S_{r,T,\lambda}$ consists of a family of couples $$\left(K^{r,w,T,\lambda}_v, J^{r,w,T,\lambda}_v\right)$$ indexed by vertices $v\in V(T)$ such that:
\begin{itemize}
    \item if $v\in V^l(T)$ then:
    \begin{itemize}
        \item $K^{r,w,T,\lambda}_v\in \Omega^1(S_{r,T,\lambda}(v),C^\infty(M))$ is an Hamiltonian-valued one-form which vanishes identically if $v\notin T_\textnormal{red}$, while for $v\in T_\textnormal{red}$ is such that for any $i\in\{0,\ldots, |v|\}$ it satisfies $$K^{r,w,T,\lambda}_v|_{L^v_i} = 0 \textnormal{   for any   }\xi\in T(\partial_iS_{r,T,\lambda}(v)),$$ and for any $i\in\{0,\ldots, |v|\}$ on the strip-like end $\epsilon^v_i$ of $S_{r,T,\lambda}(v)$ we have\footnote{Recall that we set $H_p^{L,L}=H_q^{L,L}=0$ and $J^{L,L}_p=J^{L,L}_q=J^L$ for any $L\in \lagmd(M,\omega)$.} $$K^{r,w,T,\lambda}_v=H^{L_{i-1},L_{i}}_pdt \textnormal{   for any   }|s|\geq 1,$$
        \item $J^{r,w,T,\lambda}_v$ is a domain-dependent $\omega$-compatible almost complex structure such that if\footnote{See page \pageref{fundamentaldecomposition} for the definition of the subtrees $T_i^F$.} $v\in T_i^F\subset T\setminus T_\textnormal{red}$ for some $i\in\{0,\ldots, d^F\}$ it is identical to $J^{L_i^F}_p=J^{L_i^F}_q$, while if $v\in T_\textnormal{red}$ it is such that for any $i\in\{0,\ldots, |v|\}$ on the $i$th strip-like end $\epsilon_i^v$ of $S_{r,T,\lambda}(v)$ we have $$J^{r,w,T,\lambda}=J^{L_{i-1},L_{i}}_p;$$
    \end{itemize}
    \item if $v\in V^\col(T)$, then:
    \begin{itemize}
        \item $K^{r,w,T,\lambda}_v\in \Omega^1(S_{r,T,\lambda}(v),C^\infty(M))$ is an Hamiltonian-valued one-form which vanishes identically if $v\notin T_\textnormal{red}$, while for $v\in T_\textnormal{red}$ is such that for any $i\in\{0,\ldots, |v|\}$ it satisfies $$K^{r,w,T,\lambda}_v|_{L^v_i} = 0 \textnormal{   for any   }\xi\in T(\partial_iS_{r,T,\lambda}(v)),$$ and for any $i\in\{1,\ldots, |v|\}$ on the strip-like end $\epsilon^v_i$ of $S_{r,T,\lambda}(v)$ we have $$K^{r,w,T,\lambda}_v=H^{L_{i-1},L_{i}}_pdt \textnormal{   for any   }|s|\geq 1$$ while on the $0$th strip-like end we have $$K^{r,w,T,\lambda}_v=H^{L_0,L_d}_qdt,$$
        \item $J^{r,w,T,\lambda}_v$ is a domain-dependent $\omega$-compatible almost complex structure such that if $v\in T_i^F\subset T\setminus T_\textnormal{red}$ for some $i\in\{0,\ldots, d^F\}$ it is identical to $J^{L_i^F}_p=J^{L_i^F}_q$, while if $v\in T_\textnormal{red}$ it is such that for any $i\in\{1,\ldots, |v|\}$ on the $i$th strip-like end $\epsilon_i^v$ of $S_{r,T,\lambda}(v)$ we have $$J^{r,w,T,\lambda}=J^{L_i, L_{i+1}}_p,$$ while on the $0$th strip-like end we have $$J^{r,w,T,\lambda}_v=J^{L_0,L_d}_q;$$
    \end{itemize}
       \item if $v\in V^r(T)$ then:
    \begin{itemize}
        \item $K^{r,w,T,\lambda}_v\in \Omega^1(S_{r,T,\lambda}(v),C^\infty(M))$ is an Hamiltonian-valued one-form which vanishes identically if $v\notin T_\textnormal{red}$, while for $v\in T_\textnormal{red}$ is such that for any $i\in\{0,\ldots, |v|\}$ it satisfies $$K^{r,w,T,\lambda}_v|_{L^v_i} = 0 \textnormal{   for any   }\xi\in T(\partial_iS_{r,T,\lambda}(v)),$$ and for any $i\in\{0,\ldots, |v|\}$ on the strip-like end $\epsilon^v_i$ of $S_{r,T,\lambda}(v)$ we have $$K^{r,w,T,\lambda}_v=H^{L_{i-1},L_{i}}_qdt \textnormal{   for any   }|s|\geq 1,$$
        \item $J^{r,w,T,\lambda}_v$ is a domain-dependent $\omega$-compatible almost complex structure such that if $v\in T_i^F\subset T\setminus T_\textnormal{red}$ for some $i\in\{0,\ldots, d^F\}$ it is identical to $J^{L_i^F}_p=J^{L_i^F}_q$, while if $v\in T_\textnormal{red}$ it is such that for any $i\in\{0,\ldots, |v|\}$ on the $i$th strip-like end $\epsilon_i^v$ of $S_{r,T,\lambda}(v)$ we have $$J^{r,w,T,\lambda}_v=J^{L_{i-1},L_{i}}_q.$$
    \end{itemize}
\end{itemize}
\end{defn}
\begin{rem}
    The above definition is long but intuitive. Assume $T=T_\textnormal{red}$ for simplicity: interpolation data are just $p$- or $q$-perturbation data on uncolored vertices (depending on how far from the root the vertex lies), while on colored vertices interpolation data interpolate from Floer data with respect to $p$ to a Floer datum with respect to $q$.
\end{rem}
\begin{rem}
    Notice that altough we want to interpolate between the \textit{perturbation data} $p$ and $q$, the definition of interpolation data between them makes no explicit mention of any perturbation datum, but only Floer ones. The connection to perturbation data will appear in the definition of the consistency condition for interpolation data which we introduce below.
\end{rem}
We define an interpolation datum between $p$ and $q$ for $\vec{L}$ as a smooth choice of interpolation data on $\mathcal{R}^{d+1}_C(\vec{L})$. A universal choice of interpolation data from $p$ to $q$ is a choice of interpolation datum between $p$ and $q$ for any tuple $\vec{L}$ of Lagrangians in $\lagmd(M,\omega)$ of any length.\\

Similarly to the case of perturbation data, given a universal choice of interpolation data from $p$ to $q$, we might encounter some consistency problems (see Section \ref{pdgen}). In this case, those problems are a bit more subtle compared to the case of perturbation data, as in the gluing process for stacked disks involves gluing of two kinds of clusters (in particular, disks and stacked disks, see Section \ref{stackeddisks}).\\ Fix a universal choice of interpolation data from $p$ to $q$ on $(\pi^{d+1,2}_C(\vec{L}))_{d,\vec{L}}$. We say that this choice is consistent if for each $d\geq 1$ and for any tuple $\vec{L}=(L_0,\ldots, L_d)$ of Lagrangians in $\lagmd(M,\omega)$ we have:
\begin{enumerate}
    \item for any $T\in \mathcal{T}^{d+1}_\col(\vec{L})$ and any $k\in \Z_{\geq 0}$ there is a subset $$U\subset \mathcal{R}^{T,2}\times \lambda_U^k(T)\times [-1,0)^{|E^\textnormal{int}_F(T)|+k}$$ whose closure is a neighbourhood of the trivial gluing, where the gluing parameter are small such that the interpolation data for stacked clusters over $U$ agree with interpolation data induced by gluing on thin parts;
    \item all interpolation data extend smoothly to $\partial \overline{\mathcal{R}^{d+1,2}_C(\vec{L})}$ and agree there with perturbation data coming from (trivial) gluing.
\end{enumerate}
Assume for a brief moment that $\vec{L}$ is made of different Lagrangians, i.e. $\vec{L}=\vec{L^F}$ in the notation of Section \ref{tuplesoflags}. In more detail, point (1) of the above definition means in this case the following: let $T\in \mathcal{T}^{d+1}_\col(\vec{L})$, $(r,w)$ in the image of $\gamma^{T,2}$ restricted to a subset where the gluing parameters are small, and consider $e\in E^\textnormal{int}(T)$ (of course, $e$ is not unilabelled) and the summand $\varepsilon_{f^+(e)}([0,-\ln(-\rho_e)]\times [0,1])$ of the thin part of $(S_r,w)\in \mathcal{S}^{d+1,2}(\vec{L})$, then \begin{itemize}
    \item if $f^+(e)=(v,e)$ for some $v\in V^l(T)$, then here we do want interpolation data restricted to $(S_r,w)$ to match the perturbation data on $S_{r_v}\in \mathcal{S}^{|v|}(\vec{L_v})$ \textit{as prescriped by the universal choice $p$};
    \item if $f^+(e)=(v,e)$ for some $v\in V^\col(T)$, then here we do want interpolation data restricted to $(S_r,w)$ to match the interpolation data on $(S_{r_v}, w_v)\in \mathcal{S}^{|v|,2}(\vec{L_v})$;
    \item if $f^+(e)=(v,e)$ for some $v\in V^r(T)$, then here we do want interpolation data restricted to $(S_r,w)$ to match the perturbation data on $S_{r_v}\in \mathcal{S}^{|v|}(\vec{L_v})$ \textit{as prescriped by the universal choice $q$}.
\end{itemize}
The following result, the proof of which we omit, is an extension to stacked clusters of a result contained in \cite{Sylvan2019OnCategories}.
\begin{lem}
    Consistent choices of interpolation data exist.
\end{lem}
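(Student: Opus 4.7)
The plan is to construct interpolation data by induction on the ``total complexity'' $d+k$, where $d+1$ is the length of the tuple $\vec{L}$ and $k=|E^\textnormal{int}(T)|$ counts internal edges, working our way up from the boundary of $\overline{\mathcal{R}^{d+1,2}_C(\vec{L})}$ into the interior. The base case is $d=1$: here $\mathcal{R}^{2,2}_C(L_0,L_1)$ is either a singleton (when $L_0=L_1$, in which case the Morse parts of $p$ and $q$ agree by hypothesis and we take the trivial ``interpolation'', i.e. the common perturbation datum), or a singleton parametrizing a strip (when $L_0\neq L_1$), on which we need to choose a single Hamiltonian one-form interpolating between the Floer Hamiltonian $H^{L_0,L_1}_p\,dt$ at $s\to -\infty$ and $H^{L_0,L_1}_q\,dt$ at $s\to +\infty$, with the matching $J$-family of almost complex structures. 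Such a choice exists by the standard construction of continuation data.

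For the inductive step, fix $\vec{L}=(L_0,\ldots,L_d)$ and assume interpolation data between $p$ and $q$ have been chosen consistently on every $\overline{\mathcal{R}^{d'+1,2}_C(\vec{L'})}$ of strictly smaller complexity, and that (regular) perturbation data from $p$ and $q$ have already been fixed everywhere. The boundary strata of $\overline{\mathcal{R}^{d+1,2}_C(\vec{L})}$ are parametrized by colored labelled trees $T\in \mathcal{T}^{d+1}_\col(\vec{L})$ as described in the three cases (lower, middle, upper trees) of Section \ref{stackeddiscs}, together with broken/finite-length internal edges coming from the cluster extension. Over each stratum $\mathcal{R}^{T,2}\times\overline{\lambda_U(T)}$ the rule for defining interpolation data is forced by the inductive hypothesis: on each vertex $v\in V^l(T)$ we install the perturbation datum $\mathcal{D}^{\vec{L_v}}_p$, on each $v\in V^r(T)$ we install $\mathcal{D}^{\vec{L_v}}_q$, and on each $v\in V^\col(T)$ we install the interpolation datum inherited from the lower-complexity family $\overline{\mathcal{R}^{|v|,2}_C(\vec{L_v})}$; on line-segment components we install the Morse data $(\boldsymbol{f},\boldsymbol{g})$ which by assumption are common to $p$ and $q$. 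Pulling back by the boundary charts $\overline{\gamma^{T,2}}$ (and their cluster analogues obtained by combining with the gluing maps of Section \ref{sourcespaces}) defines a candidate interpolation datum on a neighborhood of every boundary facet, which automatically satisfies the asymptotic conditions on strip-like ends demanded in Section \ref{intdata} because $p$ and $q$ do, and because the ends of stacked families were chosen compatibly with those of unstacked families.

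The remaining step is to check that these boundary definitions are mutually compatible on the overlaps of different boundary charts (i.e.\ near corners of $\overline{\mathcal{R}^{d+1,2}_C(\vec{L})}$) and then to extend them smoothly into the interior. Compatibility at codimension-one corners of lower and middle type is straightforward from the inductive hypothesis, since the two possible orders of breaking involve only vertex-wise independent data. The genuine obstacle, and the one which motivates the construction of Section \ref{stackeddiscs}, appears at corners of upper type, where several gluing parameters are linked by the coloring condition that all colored vertices lie at the same distance from the root; Example \ref{weneedintrinsic} shows that iterated gluings along different orders can a priori produce different stacked configurations. This is exactly where the intrinsic width function $\{w_i^{d,\vec{L}}\}$ enters: its defining compatibility under gluing guarantees that the formula $l_i = e^{-1/\rho} - w_{v_i} - w_i^k(S_r)$ used in defining $\gamma^{T^k,2}$ produces the same stacked configuration regardless of the order in which one performs the gluings, so that the boundary definitions of the interpolation data match on overlaps. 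Once this coherence is established, a standard partition-of-unity argument in the $\mathcal{R}^{d+1,2}_C(\vec{L})$ directions, carried out separately on the Hamiltonian one-form and the almost complex structure components (the latter within the contractible space of $\omega$-compatible $J$'s), extends the boundary data to a smooth interpolation datum on all of $\overline{\mathcal{R}^{d+1,2}_C(\vec{L})}$, completing the induction.
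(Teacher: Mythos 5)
Your proof is correct and is essentially the argument the paper has in mind: the paper omits the proof entirely, deferring to Sylvan, and your inductive scheme --- install $p$-data on $V^l(T)$, $q$-data on $V^r(T)$ and lower-order interpolation data on $V^{\col}(T)$, pull these back to neighbourhoods of the boundary strata via the gluing charts (with the intrinsic width function guaranteeing coherence of the charts, and hence of the pulled-back data, at upper-type corners), then extend inward using convexity of the space of admissible Hamiltonian one-forms and contractibility of the space of compatible $J$'s --- is the standard way to carry that out in the stacked-cluster setting. The only cosmetic point is that the induction is more naturally organized on the length $d+1$ of the tuple alone rather than on $d+|E^{\textnormal{int}}(T)|$, since the vertex moduli appearing in the boundary strata of $\overline{\mathcal{R}^{d+1,2}_C(\vec{L})}$ already involve tuples of strictly smaller length.
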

\noindent For any $p,q\in E^{(\boldsymbol{f},\boldsymbol{g})}_\textnormal{reg}$, we define the space $E^{p,q}$ of consistent universal choices of interpolation data for stacked clusters.
%+=====================================
%======================================
%======================================

\subsection{Moduli spaces of stacked Floer clusters with Lagrangian boundary}
Fix regular perturbation data $p,q\in E^{(\boldsymbol{f},\boldsymbol{g})}_\textnormal{reg}$ sharing the same Morse part and an interpolation datum $h\in E^{p,q}$ between them. Let $d\geq 1$ and $\vec{L}=(L_0,\ldots, L_d)$ be a tuple of Lagrangians in $\lagmd(M,\omega)$. We define moduli spaces of stacked Floer clusters with boundary on $\vec{L}$ with respect to $h$.\\
Assume first $L_0\neq L_d$. Pick $\vec{x}_i:=(x^i_1,\ldots,x^i_{m_i-1})$ for any $i=0,\ldots,d^R$ (see page \ref{defofmis} for the definition of the numers $m_i$), where $x^i_j\in \Crit(f^{\overline{L^i}})$ are critical points, orbits $\gamma_j\in \mathcal{O}(H^{\overline{L_{j-1}},\overline{L_j}}_p)$ for any $i=1,\ldots,d^R$ and $\gamma_+\in \mathcal{O}(H^{\overline{L_0},\overline{L_{d^R}}}_q)$ and a class $A\in \pi_2(M,\vec{L})$. We define the moduli space $$\mathcal{M}^{d+1,2}\left(\vec{x}_0, \gamma_1, \vec{x}_1,\ldots,\gamma_{d^R}, \vec{x}_{d^R}; \gamma^+;A;h\right)$$ of stacked Floer clusters joining $\vec{x}_0, \gamma_1, \vec{x}_1,\ldots,\gamma_{d^R}, \vec{x}_{d^R}$ to $\gamma^+$ in the class $A$ as the space of tuples $((r,w,T,\lambda),u)$ where $$u=\left((u_v)_{v\in V(T)}, (u_e)_{e\in E(T)}\right)\colon S_{r,T,\lambda}\to M$$ satisfies

\begin{enumerate}
    \item for any vertex $v\in V(T)$, $u_v\colon S_{r,T,\lambda}(v)\to M$ satisfies the $(K^{r,w,T,\lambda}_v(h),J^{r,w,T,\lambda}_v(h))$-Floer equation and the boundary conditions $u(\partial_iS_{r,T,\lambda})\subset \overline{L_i}$,
    \item for any $i=1,\ldots,d^R$ we have $$\lim_{s\to \-\infty}u_{h(e_i(T_\textnormal{red}))}(\overline{\epsilon_i}(s,t))=\gamma_i(t)$$ and $$\lim_{s\to \infty}u_{h(e_0(T_\textnormal{red}))}(\epsilon_0(s,t))=\gamma^+(t),$$
    \item for any $i=0,\ldots,d^R$ and any interior edge $e\in E_i^\textnormal{int}(T)$ (uni)labelled by $\overline{L_i}$ there is a class $B_e\in \pi_2(M,L)$ such that $$u_e\in \mathcal{P}(u_{t(e)}(z_{t(e)}), u_{h(e)}(z_{h(e)});(\boldsymbol{f},\boldsymbol{g});e;B_e),$$
    \item for any $i=0,\ldots,d^R$ and any $j=1,\ldots,m_i$ there is a class $B_j^i\in \pi_2(M,\overline{L_i})$ such that $$u_{e^i_j(T_\textnormal{uni})}\in \mathcal{P}(x^i_{j}, u_{h(e_j(T_i^F))}(z_{h(e_j(T_i^F))});(\boldsymbol{f},\boldsymbol{g});e^i_j(T_\textnormal{uni});B_j^i),$$
    \item We have the relation $$A=\sum_{v\in V(T)}[u_v]+\sum_{e\in E^\textnormal{int}(T)}B_e+\sum_{i=0}^{d^R}\sum_{j=1}^{m_i}B_j^i$$ on $\pi_2(M,\vec{L})$.
\end{enumerate}

Assume now $L_0=L_d$. Pick $x^+\in \Crit(f^{\overline{L_0}})$, $\vec{x}_i:=(x_1^i,\ldots,x_{\overline{m_i}}^i)$ for $i=0,\ldots,d^R+1$, where $x^i_j\in \Crit(f^{\overline{L_i}})$ are critical points, orbits $\gamma_j\in \mathcal{O}(H^{\overline{L_{j-1}},\overline{L_j}}_p)$ for $i=1,\ldots,d^R+1$ and a class $A\in \pi_2(M,\vec{L})$. We define the moduli space $$\mathcal{M}^{d+1,2}\left(\vec{x}_0, \gamma_1, \vec{x}_2,\ldots,\gamma_{d^R+1}, \vec{x}_{d^R+1}; x^+;A;h\right)$$ of stacked Floer clusters joining $\vec{x}_0, \gamma_1, \vec{x}_1,\ldots,\gamma_{d^R+1}, \vec{x}_{d^R+1}$ to $x^+$ in the class $A$ as the space of tuples $((r,w,T,\lambda),u)$ where $$u=((u_v)_{v\in V(T)}, (u_e)_{e\in E(T)})\colon S_{r,T,\lambda}\to M$$ satisfies

\begin{enumerate}
    \item for any vertex $v\in V(T)$, $u_v\colon S_{r,T,\lambda}(v)\to M$ satisfies the $(K^{r,w,T,\lambda}_v(h),J^{r,w,T,\lambda}_v(h))$-Floer equation and the boundary conditions $u(\partial_iS_{r,T,\lambda})\subset \overline{L_i}$,
    \item for any $i=1,\ldots,d^R$ we have $$\lim_{s\to \-\infty}u_{h(e_i(T_\textnormal{red}))}(\overline{\epsilon_i}(s,t))=\gamma_i(t),$$
    \item for any $i=0,\ldots,d^R$ and any interior edge $e\in E_i^\textnormal{int}(T)$ (uni)labelled by $\overline{L_i}$ there is a class $B_e\in \pi_2(M,L)$ such that $$u_e\in \mathcal{P}(u_{t(e)}(z_{t(e)}), u_{h(e)}(z_{h(e)});(\boldsymbol{f},\boldsymbol{g});e;B_e),$$
    \item for any $i=0,\ldots,d^R$ and any $j=1,\ldots,m_i$ there is a class $B_j^i\in \pi_2(M,\overline{L_i})$ such that $$u_{e^i_j(T_\textnormal{uni})}\in \mathcal{P}(x^i_{j}, u_{h(e^i_j(T_\textnormal{uni}))}(z_{h(e^i_j(T_\textnormal{uni}))});p;e^i_j(T_\textnormal{uni});B_j^i)$$ and there is a class $B_0^0\in \pi_2(M,\overline{L_0})$ such that $$u_{e_0(T)}\in \mathcal{P}(u_{t(e_0(T))}, x^+;(\boldsymbol{f},\boldsymbol{g});e_0(T);B_0^0),$$
    \item We have the relation $$A=\sum_{v\in V(T)}[u_v]+\sum_{e\in E^\textnormal{int}(T)}B_e+\sum_{i=0}^{d^R}\sum_{j=1}^{m_i}B_j^i+B_0^0$$ on $\pi_2(M,\vec{L})$.
\end{enumerate}

\noindent We have the following transversality result for stacked Floer clusters.
\begin{prop}\label{stacktrans}
    Let $(\boldsymbol{f}, \boldsymbol{g})$ be a choice of a Morse perturbation data in the sense of Section \ref{intdata} and $p,q\in E^{(\boldsymbol{f}, \boldsymbol{g})}_\textnormal{reg}$. Then there is a generic subset $E^{p,q}_\textnormal{reg}\subset E^{p,q}$ such that for any $h\in E^{p,q}_\textnormal{reg}$ and any tuple $\vec{L}=(L_0,\ldots, L_d)$ of Lagrangians in $\lagmd(M,\omega)$ of any length $d\geq 1$ the following hold:
    \begin{enumerate}
        \item if $L_0\neq L_d$ then for any $\vec{x}_i:=(x^i_1,\ldots,x^i_{m_i-1})$, $i=0,\ldots,d^R$, where $x^i_j\in \Crit(f^{\overline{L^i}})$ are critical points, any orbits $\gamma_j\in \mathcal{O}(H^{\overline{L_{j-1}},\overline{L_j}}_p)$, $i=1,\ldots,d^R$, and $\gamma_+\in \mathcal{O}(H^{\overline{L_0},\overline{L_{d^R}}}_q)$ and any class $A\in \pi_2(M,\vec{L})$ satisfying $$d_{A}^{(\vec{x}_i)_i, (\gamma_j)_j,\gamma^+}+1\leq 1$$ then the moduli space $$\mathcal{M}^{d+1,2}\left(\vec{x}_0, \gamma_1, \vec{x}_1,\ldots,\gamma_{d^R}, \vec{x}_{d^R}; \gamma^+;A;h\right)$$ is a smooth manifold of dimension $d_{A}^{(\vec{x}_i)_i, (\gamma_j)_j,\gamma^+}+1$.
        \item if $L_0=L_d$ then for any $x^+\in \Crit(f^{\overline{L_0}})$, $\vec{x}_i:=(x_1^i,\ldots,x_{\overline{m_i}}^i)$. $i=0,\ldots,d^R+1$, where $x^i_j\in \Crit(f^{\overline{L_i}})$ are critical points, any orbits $\gamma_j\in \mathcal{O}(H^{\overline{L_{j-1}},\overline{L_j}}_p)$, $i=1,\ldots,d^R+1$ and any class $A\in \pi_2(M,\vec{L})$ satisfying $$d_{A}^{(\vec{x}_i)_i, (\gamma_j)_j,x^+}+1\leq 1$$ then the moduli space $$\mathcal{M}^{d+1,2}\left(\vec{x}_0, \gamma_1, \vec{x}_2,\ldots,\gamma_{d^R+1}, \vec{x}_{d^R+1}; x^+;A;h\right)$$ is a smooth manifold of dimension $d_{A}^{(\vec{x}_i)_i, (\gamma_j)_j,x^+}+1$
    \end{enumerate}
\end{prop}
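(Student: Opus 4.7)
The strategy mirrors that of Proposition \ref{tc1}: decompose a stacked Floer cluster into `elementary' pieces, use the regularity assumptions on the already-fixed data to handle most of the pieces, and perform a generic perturbation of the interpolation datum to handle the remaining ones. Concretely, given a stacked Floer cluster $((r,w,T,\lambda),u)$ representing an element of the relevant moduli space, the vertex partition $V(T)=V^l(T)\sqcup V^\col(T)\sqcup V^r(T)$ of Section \ref{colored trees} induces a decomposition of $u$ into: Morse-pearly pieces supported on $T\setminus T_\textnormal{red}$ (governed by $(\boldsymbol{f},\boldsymbol{g})$), lower Floer polygons at vertices in $V^l(T_\textnormal{red})$ (governed by $p$), upper Floer polygons at vertices in $V^r(T_\textnormal{red})$ (governed by $q$), and stacked Floer polygons at vertices in $V^\col(T_\textnormal{red})$, which are the only pieces where the interpolation datum $h$ really enters.

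Since $p,q\in E^{(\boldsymbol{f},\boldsymbol{g})}_\textnormal{reg}$ by assumption, and the Morse part is shared, the lower, upper, and Morse-pearly pieces are already cut out transversely and satisfy the simplicity/absolute distinctness properties of Definition \ref{defsimple}. The whole problem therefore reduces to showing that, for a generic choice $h\in E^{p,q}$, the stacked Floer polygons at colored vertices are regular and simple in the same sense, and that the combined linearised operator over the entire cluster is surjective. The bookkeeping on dimensions is standard: stacking introduces one extra real parameter per colored vertex (the width $w$), but the coloring distance constraint in Section \ref{colored trees} forces all colored vertices to contribute a single overall parameter, explaining the $+1$ shift in the index formula.

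I plan to carry out the perturbation argument by following \cite[Chapter 9k]{Seidel2008FukayaTheory} vertex-by-vertex on $V^\col(T)$, restricted to admissible deformations of $h$ supported on the thick parts of colored punctured discs and vanishing along Lagrangian boundaries (so that consistency with the fixed $p$ and $q$ on the strip-like ends is preserved, as well as the boundary-gluing conditions for interpolation data introduced in Section \ref{intdata}). The universal moduli space fibered over the Banach manifold of admissible deformations is cut out by a Fredholm section, and a Sard--Smale argument produces the desired residual subset $E^{p,q}_\textnormal{reg}\subset E^{p,q}$. The polygonal Maslov index is additive under the decomposition and reduces over $T\setminus T_\textnormal{red}$ to the usual Maslov index, so disc bubbling and side bubbling are excluded in virtual dimension $\leq 1$ by the assumption $N_L\geq 2$, exactly as in the monotone case of Proposition \ref{tc1}.

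The main obstacle is ensuring that admissible deformations of $h$ generate enough surjectivity on colored vertices without breaking the consistency relations with the fixed data $p$ and $q$ near the boundary of $\overline{\mathcal R^{d+1,2}_C(\vec L)}$. Here the freedom is tighter than in Proposition \ref{tc1} because the Hamiltonian endpoints of the stacked strip-like ends and the interpolation on both the incoming ($p$-side) and outgoing ($q$-side) cobordisms are prescribed. This is overcome by restricting deformations to the interior of the thick parts of stacked discs, where no compatibility condition is imposed, and invoking the usual injective-point/absolute-distinctness argument for simple curves from \cite{Mcduff2012J-holomorphicTopology} to promote pointwise surjectivity of the universal linearisation to surjectivity of the linearised Floer operator. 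Once this is established, compactness of $1$-dimensional components follows as in Proposition \ref{cc1}, and the boundary strata correspond to the codimension-one faces of $\overline{\mathcal R^{d+1,2}_C(\vec L)}$ parametrised by the three families of colored trees described in Section \ref{stackeddiscs}, which is precisely what will be needed to define the $A_\infty$-functor $\mathcal F^{p,q}_h$.
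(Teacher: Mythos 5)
Your proposal is correct and follows exactly the strategy the paper itself indicates: the paper explicitly declines to prove Proposition \ref{stacktrans}, saying only that ``the idea is to decompose stacked clusters into Morse trees, stacked discs and discs and prove that each part is regular,'' which is precisely the decomposition along $V(T)=V^l(T)\sqcup V^{\col}(T)\sqcup V^r(T)$ that you carry out. Your sketch is in fact more detailed than the paper's, and your observations on the $+1$ dimension shift and on confining the perturbation of $h$ to thick parts of the colored discs are consistent with the analogous arguments given for Proposition \ref{tc1} and in Section \ref{epspd3}.
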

\noindent The proof is a combinations of the arguments contained in \cite{Sylvan2019OnCategories} with those sketched for the proof of Proposition \ref{tc1} (cfr. Proposition 4.7 in \cite{Sheridan2011OnPants}).
\begin{rem}
    Let $h\in E^{p,q}_\textnormal{reg}$ and assume $\vec{L}$ is made of $d+1$ identical Lagrangians, i.e. $\vec{L^F}=(L)$, and pick $\vec{x}$, $x^+$ and $A\in \pi_2(M,L)$ as above such that $d_{A}^{\vec{x},x^+}+1=0$. Then by a simple dimension argument all the moduli spaces $\mathcal{M}^{d+1,2}(\vec{x},x^+;A;h)$ are empty except if $d=1$, $x=x^+$ and $A=0$, in which case $\mathcal{M}^{2,2}(x^+;x^+)= \mathcal{M}^2(x^+,x^+)=\{x^+\}$, as hidden in the definition of interpolation data there is the fact that we do not perturb Morse perturbation data.
\end{rem}
%=====================================
%======================================
%======================================

\subsection{Definition of continuation functors}
Fix regular perturbation data $p,q\in E^{(\boldsymbol{f},\boldsymbol{g})}_\textnormal{reg}$ sharing the same Morse part and a regular interpolation datum $h\in E^{p,q}_\textnormal{reg}$ between them. By the results of Section \ref{mbfuk} associated to $p$ and $q$ there are strictly unital $A_\infty$-categories $\mathcal{F}uk(M;p)$ and $\mathcal{F}uk(M;q)$ which have the same set of objects. In this section we construct an $A_\infty$-functor $$\mathcal{F}^{p,q}\colon \mathcal{F}uk(M;p)\to \mathcal{F}uk(M;q)$$ which will depend on the choice of $h$.\\
First, we declare $\mathcal{F}^{p,q}$ to be the identity on objects.\\
Let $d\geq 1$ and $\vec{L}=(L_0,\ldots, L_d)$ be a tuple of Lagrangians in $\lagmd(M,\omega)$. We define $$\mathcal{F}^{p,q}_d\colon CF(L_0,L_1;p)\otimes \cdots\otimes CF(L_{d-1},L_d;p)\to CF(L_0,L_d;q)$$ as follows. First, we assume $L_0\neq L_d$. Let $\vec{x}_i:=(x^i_1,\ldots,x^i_{m_i-1})$, $i=0,\ldots,d^R$, where $x^i_j\in \Crit(f^{\overline{L_i}})$ are critical points, $\gamma_j\in \mathcal{O}(H^{\overline{L_{j-1}},\overline{L_j}}_p)$, $i=1,\ldots,d^R$ are orbits. Then we define $$\mathcal{F}^{p,q}_d(\vec{x}_0, \gamma_1, \vec{x}_1,\ldots,\gamma_{d^R}, \vec{x}_{d^R}):=\sum_{\gamma^+,A}\sum_u T^{\omega(u)}\cdot \gamma^+$$ where the first sum runs over orbits $\gamma^+\in \mathcal{O}(H^{L_0,L_d})$ and classes $A\in \pi_2(M,\vec{L})$ such that $$d_{A}^{(\vec{x}_i)_i, (\gamma_j)_j,\gamma^+}+1=0$$ and the second sum runs over stacked Floer clusters $$(r,w,T,\lambda,u)\in \mathcal{M}^{d+1,2}\left(\vec{x}_0, \gamma_1, \vec{x}_1,\ldots,\gamma_{d^R}, \vec{x}_{d^R}; \gamma^+;A;h\right)$$

Assume now $L_0=L_d$. Let  $\vec{x}_i:=(x_1^i,\ldots,x_{\overline{m_i}}^i)$ for $i=0,\ldots,d^R+1$, where $x^i_j\in \Crit(f^{\overline{L_i}})$ are critical points, orbits $\gamma_j\in \mathcal{O}(H^{\overline{L_{j-1}},\overline{L_j}})$ for any $i=1,\ldots,d^R$. Then we define $$\mathcal{F}^{p,q}_d(\vec{x}_0, \gamma_1, \vec{x}_1,\ldots,\gamma_{d^R}, \vec{x}_{d^R}):=\sum_{x^+,A}\sum_u T^{\omega(u)}\cdot x^+$$ where the first sum runs over critical points $x^+\in \Crit(f^{L_0})$ and classes $A\in \pi_2(M,\vec{L}$ such that $$d_{A}^{(\vec{x}_i)_i, (\gamma_j)_j,x^+}+1=0$$ and the second sum runs over Floer clusters $$(r,w,T,\lambda,u)\in \mathcal{M}^{d+1,2}\left(\vec{x}_0, \gamma_1, \vec{x}_2,\ldots,\gamma_{d^R+1}, \vec{x}_{d^R+1}; x^+;A;h\right)$$
Hidden in the next Proposition there is a compactness-type statement which follows from standard Gromov-compactness arguments up to the case where two or more configurations break simultaneosly: this case is a bit more delicate and is handled in [Lemma 3.28]\cite{Sylvan2019OnCategories}.
\begin{prop}
Let $(\boldsymbol{f}, \boldsymbol{g})$ as above and $p,q\in E^{(\boldsymbol{f}, \boldsymbol{g})}_\textnormal{reg}$. Then, for any $h\in E^{p,q}_\textnormal{reg}$, $\mathcal{F}^{p,q}$ is a weakly-filtered unital $A_\infty$-functor.
\end{prop}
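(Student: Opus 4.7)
The plan is to verify separately the three claims: that $\mathcal{F}^{p,q}$ satisfies the $A_\infty$-functor equations, that it is weakly-filtered, and that it is unital.

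\textbf{Step 1: $A_\infty$-functor equations via boundary analysis.} The standard approach is to inspect the boundary of $1$-dimensional components of the moduli spaces $\mathcal{M}^{d+1,2}(\vec{x}_0,\gamma_1,\ldots;\gamma^+;A;h)$ (and their $L_0=L_d$ analogues). Since $\overline{\mathcal{R}^{d+1,2}_C(\vec{L})}$ realises the multiplihedron, the codimension-one boundary strata are of exactly two types: (a) strata coming from breaking along a strip-like end at an input (respectively the output) of $S_{r,T,\lambda}$, which pair a stacked Floer cluster for $h$ with an ordinary Floer cluster for $p$ (respectively for $q$) and produce the terms $\mathcal{F}^{p,q}_{d-k+1}(\id^{\otimes i}\otimes \mu^p_k\otimes \id^{\otimes j})$; and (b) strata parametrised by upper trees (root uncolored, $k$ colored children), which split the source into an uncolored polygon on top and $k$ stacked subclusters below, producing the composed terms $\mu^q_k(\mathcal{F}^{p,q}_{i_1}\otimes\cdots\otimes\mathcal{F}^{p,q}_{i_k})$. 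Consistency of the interpolation datum guarantees that the Floer sections match on thin parts, so that glued solutions bijectively account for these boundary points. Summing the algebraic contributions and invoking the standard $\partial^2=0$ argument for $1$-manifolds with boundary yields precisely the $A_\infty$-functor equations. Compactness up to these boundary configurations is as in Proposition~\ref{cc1}, with the simultaneous-breaking issue at generalised corners of the multiplihedron handled as in \cite[Lemma 3.28]{Sylvan2019OnCategories} via the intrinsic width function from Section~\ref{stackeddiscs}.

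\textbf{Step 2: Weakly-filtered structure and the filtration shift.} Fix a stacked Floer cluster $u$ contributing to $\mathcal{F}^{p,q}_d$. Applying the vertex-by-vertex energy identity from Section~\ref{wfs} to each polygon $u_v$, we get
\begin{equation}\nonumber
0\leq E(u)=\omega(u)-\int_0^1 H^{L_0,L_d}_q\circ\gamma^+\,dt+\sum_i\int_0^1 H^{\overline{L_{i-1}},\overline{L_i}}_p\circ\gamma_i\,dt+\sum_{v\in V(T)}\int_{S_{r,T,\lambda}(v)}R^h_{r,w,T,\lambda}(v)\circ u_v.
\end{equation}
The new feature compared to Section~\ref{wfs} is the contribution of colored vertices, on whose outgoing strip-like end the Hamiltonian is the $q$-datum while on the incoming ones it is the $p$-datum; the corresponding curvature term is bounded by the difference of the relevant bounds on $H_p$ and $H_q$. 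When $p\in E^{\varepsilon_1,\delta_1}_{\textnormal{reg}}$ and $q\in E^{\varepsilon_2}_{\textnormal{reg}}$, the construction of Section~\ref{actualconstruction} combined with an admissible deformation argument identical to the one in Section~\ref{epspd3} (restricted now to the subspace cut out by requiring the interpolation homotopies to have controlled curvature, which is exactly the class $E^{p,q;f}_{\textnormal{reg}}$) shows that the total curvature term is bounded above by $\varepsilon_2-\delta_1\varepsilon_1$ per colored vertex, and that for generic $h$ the uncolored curvature contributions can be made arbitrarily small. Hence $\mathcal{F}^{p,q}_d$ shifts filtration by $\leq \varepsilon_2-\delta_1\varepsilon_1$, giving the weakly-filtered property (and the filtered one when $\varepsilon_2\leq \delta_1\varepsilon_1$).

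\textbf{Step 3: Unitality.} Because $p$ and $q$ share the same Morse part $(\boldsymbol{f},\boldsymbol{g})$, the self-Floer complex $CF(L,L;p)=CF(L,L;q)$ is the same pearl complex and the only configurations contributing to $\mathcal{F}^{p,q}_1\colon CF(L,L;p)\to CF(L,L;q)$ of index $0$ are constant stacked strips (the moduli space of colored Morse trees with no disc bubbles on a single segment collapses to the identity by the dimension argument in the remark following Proposition~\ref{stacktrans}). Hence $\mathcal{F}^{p,q}_1(e_L)=e_L$. For $d\geq 2$, strict unitality of $\mu_d^q$ together with the vanishing dimension count applied to any $\mathcal{M}^{d+1,2}(\ldots,e_L,\ldots;\cdot;A;h)$ containing an input equal to $e_L$ (exactly as in the proof that $\mathcal{F}uk(M;p)$ is strictly unital) forces $\mathcal{F}^{p,q}_d(\ldots,e_L,\ldots)=0$, which is the required unitality identity.

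\textbf{Main obstacle.} The delicate point is Step~1 at the generalised corners of the multiplihedron coming from upper trees, where several colored blocks may degenerate simultaneously; matching boundary points to terms in the $A_\infty$-functor equation requires the careful gluing prescription afforded by the intrinsic width function. The filtration estimate in Step~2 also requires a nontrivial quantitative control: one must ensure that the admissible deformations needed for transversality do not destroy the curvature bound $\varepsilon_2-\delta_1\varepsilon_1$ obtained from the $(\varepsilon,\delta)$-profile, which is what pins down the class $E^{p,q;f}_{\textnormal{reg}}$.
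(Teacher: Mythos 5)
Your proposal is correct and follows essentially the same route as the paper: the $A_\infty$-functor equations come from the boundary structure of one-dimensional moduli spaces of stacked clusters (with the simultaneous-breaking corners of the multiplihedron handled via \cite[Lemma 3.28]{Sylvan2019OnCategories} and the intrinsic width function), the weakly-filtered property comes from the vertex-wise energy--curvature identities of Section \ref{wfs} with uniform bounds as in \cite{Biran2021LagrangianCategories}, and unitality follows from the dimension argument showing that $\mathcal{F}^{p,q}_1$ is the identity on self-Floer complexes. The only caveat is that your Step 3 claim that $\mathcal{F}^{p,q}_d(\ldots,e_L,\ldots)=0$ for $d\geq 2$ is stronger than the stated unitality and does not follow from strict unitality of $\mu^q$ alone, but it is not needed for the proposition as stated.
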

Unitality of $\mathcal{F}^{p,q}$ is obvious since by dimension arguments $\mathcal{F}^{p,q}_1$ applied to a couple $(L,L)$ of identical Lagrangians in $\lagmd(M,\omega)$ is the identity. The fact that $\mathcal{F}^{p,q}$ is weakly-filtered comes from energy-action identities involving curvature terms identical to those carried out in Section \ref{wfs} and uniform bounds can be found as in \cite{Biran2021LagrangianCategories}. The functor $\mathcal{F}^{p,q}$ will be called a continuation functor between the Fukaya categories $\mathcal{F}uk(M;p)$ and $\mathcal{F}uk(M;q)$.

Since, as it is well-known, continuation maps (that is, linear parts of continuation functors) induce isomorphisms in homology, we have the following result.
\begin{prop}\label{inv}
    Let $(\boldsymbol{f}, \boldsymbol{g})$ as above and $p,q\in E^{(\boldsymbol{f}, \boldsymbol{g})}_\textnormal{reg}$. Then, for any $h\in E^{p,q}_\textnormal{reg}$, $\mathcal{F}^{p,q}$ is a quasi-equivalence between $\mathcal{F}uk(M;p)$ and $\mathcal{F}uk(M;q)$. Moreover, $\mathcal{F}^{p,q}$ does not depend on $h$ up to quasi isomorphism of functors.
\end{prop}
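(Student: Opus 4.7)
The plan is to split Proposition \ref{inv} into its two assertions: (i) that $\mathcal{F}^{p,q}_h$ is a quasi-equivalence, and (ii) that different choices of $h \in E^{p,q}_\textnormal{reg}$ produce quasi-isomorphic functors. Since $\mathcal{F}^{p,q}_h$ is the identity on objects, essential surjectivity on cohomology is automatic, so (i) reduces to showing that for every pair of objects $L_0, L_1 \in \lagmd(M,\omega)$ the linear part
$$\mathcal{F}^{p,q}_{h,1}\colon CF(L_0,L_1;p)\to CF(L_0,L_1;q)$$
is a quasi-isomorphism of chain complexes over $\Lambda$.

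For (i), the case $L_0=L_1$ is immediate: by the dimension-counting remark following Proposition \ref{stacktrans}, $\mathcal{F}^{p,q}_{h,1}$ equals the identity on the pearl complex. For $L_0\neq L_1$, the map $\mathcal{F}^{p,q}_{h,1}$ coincides with a classical continuation map, counting elements of $\mathcal{M}^{2,2}(\gamma_-;\gamma_+;A;h)$, i.e. Floer strips whose Hamiltonian perturbation interpolates between $H^{L_0,L_1}_p$ and $H^{L_0,L_1}_q$ along the $s$-direction. The strategy is the standard one: pick a regular interpolation datum $h'\in E^{q,p}_\textnormal{reg}$ going the other way, form a concatenated interpolation datum $h\#h'\in E^{p,p}$ (which induces an element of $E^{p,p}_\textnormal{reg}$ after generic perturbation), and, using a $1$-parameter family of interpolation data connecting $h\#h'$ to the constant datum at $p$, deduce that the composition $\mathcal{F}^{q,p}_{h'}\circ \mathcal{F}^{p,q}_h$ is chain homotopic on the linear level to the identity. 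The symmetric argument with $h'\#h$ completes the proof that $\mathcal{F}^{p,q}_{h,1}$ is a quasi-isomorphism; this only uses the compactness and transversality developed in Proposition \ref{stacktrans} restricted to $d=1$.

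For (ii), pick $h_0, h_1 \in E^{p,q}_\textnormal{reg}$ and join them by a smooth generic path $(h_t)_{t\in [0,1]}$ in $E^{p,q}$. For each tuple $\vec{L}=(L_0,\ldots,L_d)$, counting with signs elements of the $1$-parameter families of virtual dimension $-1$ stacked Floer clusters over this path defines a collection of maps
$$T_d\colon CF(L_0,L_1;p)\otimes\cdots\otimes CF(L_{d-1},L_d;p)\to CF(L_0,L_d;q).$$
A standard analysis of the boundary of the $1$-parameter moduli spaces, identical in spirit to the one that verifies the $A_\infty$-functor equations for $\mathcal{F}^{p,q}_h$ itself, shows that $T=(T_d)_{d\geq 0}$ satisfies the equations of a pre-natural transformation from $\mathcal{F}^{p,q}_{h_0}$ to $\mathcal{F}^{p,q}_{h_1}$ that is closed in the $A_\infty$-functor category. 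Its $d=0$ part is, for each object $L$, a cocycle whose homology class is the unit, so $T$ is a quasi-isomorphism of $A_\infty$-functors, giving assertion (ii).

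The main obstacle in both parts is the combinatorial bookkeeping of codimension-$1$ boundary strata of parameterized moduli spaces of stacked Floer clusters. In addition to the usual Floer breakings at strip-like ends and the pearl-type degenerations at finite nodes, one must correctly match the multiplihedral boundary faces of $\overline{\mathcal{R}^{d+1,2}_C(\vec{L})}$, described in Section \ref{stackeddiscs}, with compositions of lower-order continuation and structure maps, and in the parameterized setting of (ii) also with contributions coming from the endpoints $h_0$ and $h_1$. Simultaneous breakings have to be ruled out or accounted for via the analog of Sylvan's compactness lemma already invoked in the construction of $\mathcal{F}^{p,q}_h$; once this combinatorial dictionary is set up, the required algebraic identities follow formally.
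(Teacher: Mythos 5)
Your proposal is correct and follows essentially the same route as the paper, which simply invokes the well-known fact that continuation maps (the linear parts $\mathcal{F}^{p,q}_1$) are quasi-isomorphisms for part (i) and defers part (ii) to Sylvan's construction of continuation homotopies, i.e.\ exactly the parameterized-moduli natural transformation you describe. Your write-up supplies more detail than the paper does (note only that coefficients are in $\Lambda$ over $\Z_2$, so no signs are needed), but the underlying argument is the same.
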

\noindent The last part of the proposition has been proved in \cite{Sylvan2019OnCategories} by constructing `continuation homotopies'. As we mentioned above, Proposition \ref{inv} can be generalized to continuation functors between \textit{any} two perturbation data, not only those sharing the same Morse part (see Proposition \ref{inv1}). Given the above machinery, this is not hard to achieve, but goes beyond our aim to use continuation functors to estimate `distances' between filtered Fukaya categories.
%+======================================
%=======================================
%=======================================

\subsection{$(\varepsilon_1,\varepsilon_2)$-Continuation functors and filtrations}\label{contfunandfiltr}
Consider two positive real number $\varepsilon_1,\varepsilon_2>0$ and $\delta_1,\delta_2\in \left(\frac{1}{2},1\right)$ and two perturbation data $p\in E^{\varepsilon_1, \delta_1}$ and $q\in E^{\varepsilon_2, \delta_2}$. Recall that by the main result of this paper, Theorem \ref{mainthm}, the Fukaya categories $\mathcal{F}uk(M;p)$ and $\mathcal{F}uk(M;q)$ are strictly unital and filtered. Note that an arbitrary continuation functor defined via an element of $E^{p,q}$ might shift filtration in a way that is not controlled by  the parameters $\varepsilon_1$, $\varepsilon_2$, $\delta_1$ and $\delta_2$. In this section, we will construct a subclass $E^{p,q;f}\subset E^{p,q}$ of so called $(\varepsilon_1, \varepsilon_2)$-interpolation data such that the associated continuation functors are still weakly-filtered, but with discrepancies controlled by the above parameters. Moreover, we describe choices of parameters such that the associated continuation functors are genuinely filtered. The construction of $(\varepsilon_1, \varepsilon_2)$-interpolation data is almost identical to the construction of $\varepsilon$-perturbation data, as all the Hamiltonian perturbation will still lie on strip-like ends as in Section \ref{actualconstruction}, hence we will only explicitly define $(\varepsilon_1,\varepsilon_2)$-interpolation data for cyclically different tuples of Lagrangians of length $2$ and $3$, as the general case follows by obvious modifications and gluing.
Let $\vec{L}=(L_0,L_1)$ be a couple of different Lagrangians on $\lagmd(M,\omega)$ and consider the strip $S^{2,2}$ over $\mathcal{R}^{2,2}(\vec{L})$ coming with strip-like ends $\epsilon_1,\epsilon_2$. A choice of $(\varepsilon_1,\varepsilon_2)$-interpolation datum between $p$ and $q$ for $\vec{L}$ on the strip is a choice of interpolation datum $(K, J)$ as defined in Section \ref{intdata} but subject to the following restrictions: 
\begin{enumerate}
    \item $K$ vanishes away from the strip-like ends;
    \item On the negative strip-like end we have $$K = \overline{H^{L_{i-1},L_i}}(s,t)dt$$ where $$\overline{H^{L_{i-1},L_i}}\colon (-\infty, 0)\times [0,1]\times M\to \R$$ is of the form $$\overline{H^{L_{i-1},L_i}}(s,t)= (1-\beta^{L_{i-1},L_i}(s+1))H^{L_{i-1},L_i}_{p}(t)$$ for some $\beta^{L_{i-1},L_i}\in \mathcal{F}$ (see page \pageref{definitionofF} for the definition of the family $\mathcal{F}$);
    \item On the positive strip-like end we have $$K = \overline{H^{L_{0},L_2}}(s,t)dt$$ where $$\overline{H^{L_{0},L_2}}\colon[0,\infty)\times [0,1]\times M\to \R$$ is of the form $$\overline{H^{L_{0},L_2}}(s,t) = \beta^{L_0,L_2}(s)H^{L_0,L_2}_{q}(t)$$ for some $\beta^{L_0,L_2}\in \mathcal{F}$.
\end{enumerate}
Let now $\vec{L}:=(L_0,L_1,L_2)$ be a tuple of cyclically different monotone Lagrangians in $\lagmd(M,\omega)$. Notice that $\mathcal{R}^{3,2}_C(\vec{L})=\{pt\}\times (0,\infty)$. A choice of $(\varepsilon_1,\varepsilon_2)$-interpolation datum between $p$ and $q$ for $\vec{L}$ over $(pt,w)\in \mathcal{R}^{3,2}_C(\vec{L})$ is a choice of interpolation datum $(K^{pt,w}, J^{pt,w})$ as defined in Section \ref{intdata} but subject to the following restrictions: 
\begin{enumerate}
    \item $K^{pt,w}$ vanishes away from the strip-like ends;
    \item On the $i$th ($i\in \{1,2\})$ strip-like end we have $$K^{pt,w} = \overline{H^{L_{i-1},L_i}_{pt,w}}(s,t)dt$$ where $$\overline{H^{L_{i-1},L_i}_{pt,w}}\colon (-\infty, 0)\times [0,1]\times M\to \R$$ is of the form $$\overline{H^{L_{i-1},L_i}_{pt,w}}(s,t)= (1-\beta^{L_{i-1},L_i}_{pt,w}(s+1))H^{L_{i-1},L_i}_{p}(t)$$ for some $\beta^{L_{i-1},L_i}_{pt,w}\in \mathcal{F}$;
    \item On the unique positive strip-like end we have $$K^{pt,w} = \overline{H^{L_{0},L_2}_{pt,w}}(s,t)dt$$ where $$\overline{H^{L_{0},L_2}_{pt,w}}\colon[0,\infty)\times [0,1]\times M\to \R$$ is of the form $$\overline{H^{L_{0},L_2}_{pt,w}}(s,t) = \beta^{L_0,L_2}_{pt,w}(s)H^{L_0,L_2}_{q}(t)$$ for some $\beta^{L_0,L_2}_{pt,w}\in \mathcal{F}$.
\end{enumerate}
$(\varepsilon_1,\varepsilon_2)$-interpolation data for longer tuples of Lagrangians are constructed by gluing interpolation data for tuples $\vec{L}$ with reduced tuple of length $2$ and $3$. Assuming transversality, it follows by the construction of continuation functors that the $d$th term of a continuation functor defined via an interpolation datum with parameters $\varepsilon_1, \varepsilon_2, \delta_1$ and $\delta_2$ shifts filtration by at most\footnote{In the case $L_d\neq L_0$, while in the case $L_0=L_d$ we can refine the estimate to $(d-3)$} $$(d-1)\varepsilon_2(1-2\delta_2)+ d(\varepsilon_2-\delta_1\varepsilon_1)$$ since the least filtration preserving configurations are those endowed with interpolation data lying near the boundary part of $\partial\mathcal{R}^{d+1,2}(\vec{L})$ corresponding to $$\mathcal{R}^{d+1}_C(\vec{L})\times \prod_{i=1}^d \mathcal{R}^{2,2}(L_{i-1},L_i).$$
Similarly to the case of $\varepsilon$-perturbation data in Section \ref{epspd3}, we define regular $(\varepsilon_1,\varepsilon_2)$-interpolation data as follows:
 \begin{defn}
        We define $E^{p,q;f}_\textnormal{reg}\subset E^{p,q}_\textnormal{reg}$ to be the space of regular interpolation data $h$ such that the Floer parts of $h$ is obtained via a deformation of the Floer parts of some $(\varepsilon_1,\varepsilon_2)$-interpolation datum $h'\in E^{p,q;f}$ in the sense of \cite[Chapter 9k]{Seidel2008FukayaTheory} (that is, by deformation with support on the thich parts of the polygons, to keep consistency) and such that the associated (well-defined) continuation functor $\mathcal{F}^{p,q}_h$ shifts filtration by $\leq \varepsilon_2-\delta_1\varepsilon_1$ in the sense introduced at the end of Section \ref{ainfpre}.
\end{defn}
\noindent We show that $E^{p,q;f}_\textnormal{reg}$ is non-empty. Pick an interpolation datum $h\in E^{p,q;f}$. It is well-known that there is no need to perturb the induced interpolation data on strips in order to get regularity for $d=1$. Let $d\geq 2$, $\vec{L}=(L_0,\ldots, L_d)$ be a tuple of Lagrangians in $\lagmd(M,\omega)$ and $\left(K^{\vec{L}}, J^{\vec{L}}\right)$ the Floer part of $p$ restricted to $\mathcal{R}^{d+1,2}_C(\vec{L})$. As a generic deformation $(\Delta K^{\vec{L}}, \Delta J^{\vec{L}})$ (vanishing on thin parts) turns $\left(K^{\vec{L}},J^{\vec{L}}\right)$ into a regular perturbation datum $(K^{\vec{L}}+\Delta K^{\vec{L}},J^{\vec{L}}\exp(-J^{\vec{L}}\Delta J^{\vec{L}})) $ we can choose a generic Hamiltonian deformation $\Delta K^{\vec{L}}$ such that the associated function $$(r,w,T,\lambda)\in \mathcal{R}^{d+1,2}_C(\vec{L})\longmapsto \sum_{v\in T_{\textnormal{red}}}\int_{S_{r,w,T,\lambda}(v)}\max_{x\in M}R^{\Delta K^{\vec{L}}}_{r,w,T,\lambda}(v)$$ is bounded above by the positive number $$(d-1)\varepsilon(2\delta_2-1).$$ It follows that for any $h\in E^{p,q;f}_\textnormal{reg}$ and any $d\geq 1$ the $d$th term $\left(\mathcal{F}^{p,q}_h\right)^d$ shifts filtration by $\leq d(\varepsilon_2-\delta_1\varepsilon_1)$, that is, $\mathcal{F}^{p,q}_h$ shifts filtration by $\leq \varepsilon_2-\delta_1\varepsilon_1$.

\newpage

\printbibliography

\end{document}